\setlist[enumerate,1]{font=\upshape, itemsep=1ex}\setlist[enumerate,2]{font=\upshape}
\def\Z{{\mathbb Z}}
\def\Q{{\mathbb Q}}
\def\calc{\mathcal{C}}
\def\calm{\mathcal{M}}
\def\calt{\mathcal{T}}
\def\caln{\mathcal{N}}
\def\cs{\mathbin{\#}}
\DeclareMathOperator{\cfk}{\rm CFK}
\newcommand{\spinc}{\ifmmode{{\mathfrak s}}\else{${\mathfrak s}$\ }\fi}
\newcommand{\spinct}{\ifmmode{{\mathfrak t}}\else{${\mathfrak t}$\ }\fi}
\newcommand{\fig}[2] { \includegraphics[scale=#1]{#2} }
\def\la{\widetilde{\alpha}}
\def\lb{\widetilde{\beta}}
\newtheorem{theorem}{Theorem}[section]
\newtheorem{theorem-app}{Theorem}[section]
\newtheorem{lemma}[theorem]{Lemma}
\newtheorem{corollary}[theorem]{Corollary}
\newtheorem{conjecture}{Conjecture}
\theoremstyle{definition}
\begin{document}
\title[Knot reversal and concordance]{Knot reversal acts non-trivially on the concordance group of topologically slice knots}

\author{Taehee Kim}
\address{Taehee Kim: Department of Mathematics\\
	Konkuk University\\
	Seoul 05029\\
	Republic of Korea
}
\email{tkim@konkuk.ac.kr}

\author{Charles Livingston}
\address{Charles Livingston: Department of Mathematics, Indiana University, Bloomington, IN 47405}
\email{livingst@indiana.edu}

\thanks{The first  author was supported by Basic Science Research Program through the National Research Foundation of Korea (NRF) funded by the Ministry of Education (no.2018R1D1A1B07048361). The second   author was supported by a grant from the National Science Foundation, NSF-DMS-1505586.}

%%%%%%%ABSTRACT%%%%%%%%%%%%%%

\begin{abstract}  
We construct an infinite family of  topologically slice knots that are not smoothly concordant to their reverses.  More precisely, if $\calt$ denotes the concordance group of topologically slice knots and $\rho$ is the involution of $\calt$ induced by string reversal, then $\calt / \text{Fix}(\rho)$ contains an  infinitely generated free subgroup.     The result remains true modulo the subgroup of $\calt$ generated by knots with trivial Alexander polynomial. 
\end{abstract}

\maketitle

%%%%%%%Section%%%%%%%%%%%%%%
\section{Introduction}

For an oriented knot $K$ in $S^3$, denote by $\rho(K)$ the knot formed from $K$ by reversing its   string orientation.   Note that $\rho(K)$ is not necessarily the inverse of $K$ in the concordance group, so we call it the {\it reverse} of $K$  rather than use the earlier terminology, the {\it inverse} of $K$. Clearly, $\rho$ is an involution on the set of knots; a proof that $\rho$ is nontrivial eluded knot theorists until Trotter~\cite{MR0158395} published {\it Non-invertible knots exist} in 1963.  Further advances were presented in such work as~\cite{MR1395778, MR543095, MR559040}.  With the advent of computer programs such as SnapPy~\cite{SnapPy},   determining if a given knot is reversible  is now routine. 

It is evident that  $\rho$ induces an involution on the smooth knot concordance group $\calc$; to avoid burdensome notation, we will use the same symbol,  $\rho$, to denote this induced  involution.  The nontriviality of this action was proved in~\cite{ MR1670424};  see also the earlier reference~\cite{MR711524} which has a small gap, the resolution of which is contained in ~\cite{MR1670424}.

If one restricts to the concordance group of topologically slice knots, $\calt \subset \calc$, the situation becomes  more difficult.  Casson-Gordon invariants have provided the only tools  used to study the interplay between concordance and reversibility, and these vanish for knots in $\calt$.   Heegaard Floer invariants, which in general  offer powerful tools for working with knots in $\calt$, appear to be insensitive to string orientation.  For instance, the Heegaard Floer knot chain complexes $\cfk^\infty(K)$ and $\cfk^\infty(\rho(K))$ are filtered chain homotopy equivalent.  In addition, concordance invariants arising from Khovanov homology such as the Rasmussen invariant~\cite{MR2729272} do not detect string orientation.  Despite these challenges, we prove that the action of $\rho$ on $\calt$ is highly nontrivial: let $\text{Fix}(\rho)$ denote the fixed set of the involution.

\begin{theorem}\label{thm:main} The quotient $\calt / \text{Fix}(\rho)$ contains an infinitely generated free subgroup.
\end{theorem}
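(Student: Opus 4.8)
\emph{Reduction.} The plan is to first turn the theorem into a linear-independence statement. Since $\calt$ is abelian and $\rho$ is an involutive automorphism, $\text{Fix}(\rho)$ is a subgroup and the homomorphism $\phi\co \calt \to \calt$, $\phi(x) = x - \rho(x)$, has kernel exactly $\text{Fix}(\rho)$. Thus $\calt/\text{Fix}(\rho) \cong \text{image}(\phi) \le \calt$, and it suffices to produce topologically slice knots $K_1, K_2, \dots$ for which the classes $\phi([K_i]) = [K_i] - [\rho(K_i)]$ are $\Z$-linearly independent in $\calt$. Equivalently --- using that the concordance inverse of $\rho(K)$ is the mirror $\overline{K}$, so that $[K] - [\rho(K)] = [K \# \overline{K}]$ --- I must show that for every nonzero, finitely supported integer vector $(n_i)$ the connected sum $\#_i n_i K_i$ is \emph{not} smoothly concordant to its own reverse.

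\emph{Why the standard tools fail, and what to use instead.} As noted in the introduction, $\cfk^\infty(K) \cong \cfk^\infty(\rho(K))$, so every invariant extracted from the knot Floer complex of a single knot is blind to reversal; moreover $K \# \overline{K}$ is amphichiral, so every additive, mirror-antisymmetric invariant (such as $\tau$, $\U$, or the signatures) vanishes on it. Casson--Gordon invariants \emph{are} sensitive to reversal, because reversal acts on the Alexander module by $t \mapsto t^{-1}$ and hence conjugates the characters entering their definition; but the Casson--Gordon signatures are topological concordance obstructions and vanish identically on $\calt$. The natural way out is a \emph{smooth} Casson--Gordon invariant: for a prime power $q$ one forms the $q$-fold cyclic branched cover $\Sigma_q(K)$, chooses characters $\chi$ on $H_1(\Sigma_q(K))$ prescribed by the linking form, and replaces the Casson--Gordon signature of the associated cover by the Heegaard Floer correction terms ($d$-invariants) of that cover. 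These correction terms obstruct smooth sliceness even when the knot is topologically slice, they vanish on knots of trivial Alexander polynomial (hence the parenthetical strengthening in the abstract), and, crucially, they distinguish $\chi$ from its conjugate $\overline{\chi}$, which is precisely the asymmetry that reversal produces.

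\emph{Construction and detection.} I would build the $K_i$ by genetic infection (satellite) on a fixed base knot: infect along a curve of winding number zero using Alexander-polynomial-one seeds (for example untwisted Whitehead doubles), which keeps each $K_i$ topologically slice by Freedman's theorem while letting me control the Alexander module, the relevant branched covers, and the behavior of the $d$-invariants under infection. For each $i$ I would arrange a character, or small family of characters, on which the smooth Casson--Gordon invariant takes a computable nonzero value that is genuinely asymmetric under $\chi \leftrightarrow \overline{\chi}$, and I would draw the infecting knots from an infinite family whose $d$-invariant contributions are mutually independent, for instance indexed by distinct primes $q$ so that their branched covers cannot interact.

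\emph{The main obstacle.} The hardest part is the linear independence, i.e.\ ruling out a concordance $\#_i n_i K_i \sim \rho(\#_i n_i K_i)$ for \emph{every} nonzero $(n_i)$ simultaneously. A putative such concordance forces the linking form of the branched cover of the connected sum to admit a metabolizer, and one must show that the computed, reversal-asymmetric $d$-invariants cannot all vanish on any metabolizer unless all $n_i = 0$. Controlling this metabolizer combinatorics as the number of generators grows --- ensuring the prime-power covers do not conspire to cancel and that the $d$-invariant bounds survive connected sums --- is where the real work lies, and the choice of infinitely many non-interacting primes $q$ is what I expect to make the argument go through.
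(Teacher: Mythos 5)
Your opening reduction is correct and matches the paper's (Lemma 4.5 there): since $\text{Fix}(\rho)=\ker(x\mapsto x-\rho(x))$, it suffices to make the classes $[K_i]-[\rho(K_i)]=[K_i\cs \overline{K_i}]$ linearly independent in $\calt$, and your instinct to bring in $d$--invariants of cyclic branched covers at characters constrained by the linking form is half of what the paper does. But the mechanism you propose for detecting reversal is not real. You claim the correction terms ``distinguish $\chi$ from its conjugate $\overline{\chi}$, which is precisely the asymmetry that reversal produces.'' Neither half of this is true: $d$--invariants are invariant under conjugation of Spin$^c$--structures, $d(Y,\chi)=d(Y,-\chi)$ (the paper uses this fact explicitly), and Casson--Gordon invariants are conjugation--invariant as well; moreover, string reversal does not act on characters by conjugation. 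What reversal actually does is invert the deck transformation of the branched cover and hence swap its eigenspaces: in the paper $H_1(Y_3(K))\cong\Z_7\oplus\Z_7$ splits into a $2$--eigenspace generated by $\la$ and a $4$--eigenspace generated by $\lb$, and for $\rho(K)$ these roles are exchanged. Sensitivity to reversal therefore comes not from any conjugation asymmetry of a single invariant, but from infecting the two bands of a genus--one pattern with \emph{different} knots, so that invariants take different values on the two eigenspaces. As written, your detection step cannot be carried out.

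The second gap is your dismissal of Casson--Gordon invariants on the grounds that they ``vanish identically on $\calt$.'' The Casson--Gordon theorem only forces vanishing on \emph{some} metabolizer; individual invariants $\eta(K,q,\chi)$ can be nonzero on topologically slice knots, and the paper's proof hinges on exactly this: for the topologically slice $K=R(D,J)$ one has $\eta(K,3,\la)\ne 0$ while $\eta(K,3,\lb)=0$, and dually $\bar d(Y_3(K),\lb)\ne 0$ while $d(Y_3(K),\la)=0$. This matters because your construction (all seeds untwisted Whitehead doubles, hence Alexander polynomial one) renders every Casson--Gordon invariant useless, so you must kill \emph{every} equivariant metabolizer of $H_1(Y_3(K\cs\overline{K}))$ with $d$--invariants alone. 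In the paper's classification (Lemma 3.2) there is a metabolizer $\left<\la,\la'\right>$ mixing the pure eigenvectors of $K$ and of $-\rho(K)$; since $\la$ is an eigenvector, the $d$--contribution of any element $(a\la,b\la')$ with $a,b\ne 0$ is $d(Y_3(K),\la)-d(Y_3(K),\la)=0$, so ruling this case out by $d$--invariants requires $d(Y_3(K),\la)\ne 0$, i.e.\ nonvanishing on \emph{both} eigenspaces, and the mixed metabolizers $\left<\la+r\lb',\lb+r^{-1}\la'\right>$ further require the two eigenspace values to be \emph{distinct}. Your sketch never identifies these requirements, and your (incorrect) conjugation mechanism cannot supply them. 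The paper avoids the issue by pairing one Alexander--polynomial--one seed, detected by $\bar d$ (via Cochran--Harvey--Horn and Cha), with one seed of nontrivial Alexander polynomial, detected by $\eta$ (via its discriminant or Levine--Tristram signatures); distributing the two invariants over the two eigenspaces is the actual engine of the proof. A $d$--only argument is conceivable but would need a different construction plus computations your proposal does not provide.
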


Let $\calt_\Delta$ denote the subgroup of $\calt$ consisting of concordance classes represented by knots with trivial Alexander polynomial. It was first proved in \cite[Theorem~A]{MR2955197} that $\calt/\calt_\Delta$ is nontrivial and furthermore contains an infinitely generated free subgroup. Theorem~\ref{thm:main} is an immediate corollary of the following stronger theorem, which also extends \cite[Theorem~A]{MR2955197} .

\begin{theorem}\label{thm:main2}
The quotient $\calt / (\text{Fix}(\rho)+\calt_\Delta)$ contains an infinitely generated free subgroup.
\end{theorem}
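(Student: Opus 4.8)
My plan is to detect reversal by a family of additive, real-valued concordance invariants that are insensitive to $\cfk^\infty$ of the knot itself and are instead extracted from the smooth structure of cyclic branched covers. The guiding observation is that a class $[K]$ can die in $\calt/(\text{Fix}(\rho)+\calt_\Delta)$ only if reversal fixes $[K]$ modulo $\calt_\Delta$, since reversal preserves the Alexander polynomial and hence maps $\calt_\Delta$ to itself. So it suffices to produce a homomorphism that is \emph{antisymmetric} under $\rho$ and vanishes on $\calt_\Delta$, together with a family of knots on which it is nonzero and independent. Given any concordance homomorphism $\psi$, setting $\phi(K)=\psi(K)-\psi(\rho(K))$ automatically yields $\phi(F)=0$ for $F\in\text{Fix}(\rho)$, because $\psi(\rho(F))=\psi(F)$; thus the design problem reduces to finding a $\psi$ that \emph{fails} to be reversal-invariant on the examples while still vanishing on $\calt_\Delta$.

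For $\psi$ I would use Casson--Gordon-type invariants realized through Heegaard Floer correction terms. For an odd prime power $q$ and a prime $p$, the $q$-fold cyclic branched cover $\Sigma_q(K)$ carries a linking form and a deck action, and each character $\chi\colon H_1(\Sigma_q(K);\Z)\to\Z/p$ determines a \Spc structure $\mathfrak{s}_\chi$ with correction term $d(\Sigma_q(K),\mathfrak{s}_\chi)\in\Q$. These $d$-invariants are \emph{not} determined by $\cfk^\infty(K)$ and, unlike classical Casson--Gordon signatures, need not vanish for topologically slice knots, since the topological rational homology ball bounded by $\Sigma_q(K)$ imposes no constraint on smooth correction terms. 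Two features suit them to the problem. First, if $\Delta_A=1$ then $H_1(\Sigma_q(A);\Z)=0$, there are no nontrivial characters, and the unique \Spc structure is reversal-invariant, so the antisymmetrization $\phi$ vanishes on the generators of $\calt_\Delta$. Second, the orientation-preserving homeomorphism $\Sigma_q(\rho(K))\cong\Sigma_q(K)$ conjugates the deck transformation to its inverse and so carries $\mathfrak{s}_\chi$ to some $\mathfrak{s}_{\chi'}$; when $d(\Sigma_q(K),\mathfrak{s}_{\chi})\neq d(\Sigma_q(K),\mathfrak{s}_{\chi'})$ the invariant sees reversal. (For $q=2$ the deck transformation is its own inverse and reversal is invisible, which is why one must take $q$ odd.)

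I would then build the examples by satellite or infection operations: take slice or topologically slice seeds together with companions of trivial Alexander polynomial chosen to be non-reversible at the level of the Blanchfield pairing, producing topologically slice knots $K_i$ with nontrivial, pairwise coprime Alexander polynomials. Coprimality is the device that makes the invariants behave additively and independently: the characters on the branched cover of a connected sum split according to the Alexander modules of the summands, so for each $i$ one can isolate a functional $\phi_i$ supported on the $i$-th summand, arrange $\phi_i(K_j)=0$ for $j\neq i$, and realize a lower-triangular system. Standard linear algebra then upgrades nonvanishing on the diagonal to an infinitely generated free subgroup of the quotient, which is the assertion of Theorem~\ref{thm:main2}; Theorem~\ref{thm:main} follows by forgetting $\calt_\Delta$.

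The main obstacle is this last computation: certifying that reversal genuinely moves the relevant correction term, i.e. that $d(\Sigma_q(K_i),\mathfrak{s}_{\chi})\neq d(\Sigma_q(K_i),\mathfrak{s}_{\chi'})$ for the chosen character and its reversal image. This demands both a careful bookkeeping of orientations and deck-transformation conjugation to pin down $\chi'$, and an honest calculation of the $d$-invariants of the branched covers of the specific satellites, presumably via a surgery or mapping-cone description. Controlling these correction terms precisely enough to force the strict inequality, uniformly across the infinite family and compatibly with the coprimality bookkeeping, is where the real work lies.
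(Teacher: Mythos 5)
Your instincts about the detection mechanism are exactly the paper's: pass to an odd prime-power branched cover (odd so that reversal inverts the deck transformation and genuinely moves characters), exploit the resulting eigenspace swap, arrange coprime branched-cover homology across the family so the analysis splits summand by summand, and handle $\calt_\Delta$ via $H_1(Y_3(L))=0$ when $\Delta_L=1$. The gap is structural and sits at the very first step: your reduction presumes a concordance \emph{homomorphism} $\psi$ assembled from correction terms $d(Y_q(K),\mathfrak{s}_\chi)$ at nontrivial characters $\chi$, which you then antisymmetrize to $\phi=\psi-\psi\circ\rho$. No such homomorphism is provided by this theory, and the failure is not a technicality: for nontrivial $\chi$, the number $d(Y_q(K),\mathfrak{s}_\chi)$ is not even a well-defined function of the concordance class of $K$, since concordant knots can have non-isomorphic $H_1(Y_q(\cdot))$ and there is no canonical way to transport ``the same $\chi$'' across a concordance. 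What the theory actually gives is an obstruction with an existential quantifier (Theorem~\ref{thm:obstruct}): if $K$ is smoothly slice, then there \emph{exists} an equivariant metabolizer $\calm\subset H_1(Y_q(K))$ on \emph{all} of which $\bar{d}$ vanishes and on whose prime-power-order elements $\eta$ vanishes. Consequently there are no ``functionals $\phi_i$'' to feed into a lower-triangular system; the burden is instead to enumerate and kill \emph{every} equivariant metabolizer of $H_1\bigl(Y_3\bigl(\cs_n a_n(K_n\cs-\rho(K_n))\cs L\bigr)\bigr)$, which is exactly where the paper's work lies (Lemma~\ref{lem:cases} for a single knot; the primary splitting, Corollary~\ref{cor:vanishing}, and Lemma~\ref{lem:eigenspace} for sums). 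Note also that the one invariant of this kind that \emph{is} additive and vanishes on slice knots---the correction term at the zero character---is reversal-blind (Lemma~\ref{lem:vanish}), so the homomorphism framing cannot be rescued by a cleverer choice of $\psi$.

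A second, related gap: you propose to use correction terms alone, but the metabolizer case analysis shows why two different invariants are needed. In the paper's examples the $d$-invariants vanish identically on the $\la$-eigenspace (Lemma~\ref{lem:trivial}), so the ``diagonal'' metabolizer $\left<\la,\la'\right>$ of $H_1(Y_3(K\cs-\rho(K)))$ is completely invisible to $\bar{d}$; it is eliminated only by the Casson--Gordon computation $\eta(K,3,\la)\neq 0$ (Lemma~\ref{lem:nonzero}(1)). Symmetrically, $\eta$ vanishes on the $\lb$-eigenspace because that band's companion is topologically slice (Lemma~\ref{lem:nonzero}(2)), and those metabolizers are eliminated only by $\bar{d}(Y_3(K),\lb)\neq 0$ (Lemma~\ref{lem:nonzero}(3)), which rests on the Cochran--Harvey--Horn and Cha computations. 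Your single-invariant design would thus fail on the diagonal metabolizers unless you could produce topologically slice examples with provably distinct, nonvanishing $\bar{d}$ on both eigenspaces---a computation nobody currently knows how to carry out. So: right covers, right asymmetry, right use of coprimality, but the missing idea is that these invariants obstruct through metabolizers rather than through homomorphisms, and the metabolizer elimination, interleaving $\eta$ and $\bar{d}$ across the eigenspace decomposition, is the proof.
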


Each of the knots $K$ constructed in the proof of Theorem~\ref{thm:main2} has the property that  $K \cs -\rho(K)$ is not smoothly slice, whereas $K \cs - K$ is.  As observed by Kearton~\cite{MR929430}, these knots are Conway mutants.  In general, knot invariants tend not to distinguish a knot from its Conway mutant~\cite{MR0258014};  a very short sampling of related references include~\cite{MR2195064, MR906585, MR2657370, MR2592723, MR2657645}.  References for the application of Heegaard Floer methods to mutation (but not in the setting of concordance or string reversal) include~\cite{MR2058681, MR3325731,   MR3874002, 2017arXiv170100880L}.    Recent work that touches upon Conway mutation and concordance includes~\cite{MR3402337, MR3660096}, and especially the breakthrough result of Piccirillo~\cite{Piccirillo:2018aa} proving that the Conway knot is not slice.

\smallskip

\noindent{\it Outline.}  In Section~\ref{sec:obstructions} we give slicing obstructions obtained by combining Casson-Gordon invariants and the Heegaard Floer $d$--invariant. In Section~\ref{sec:single knot}  we present a specific topologically slice knot $K$ and prove that $K \cs -\rho(K)$ is not smoothly slice. This knot $K$ is similar to one used in~\cite{MR3109864}; there, the linking form of the 3--fold branched cover of $S^3$ branched over $K$ has exactly two metabolizers.  Separate arguments are applied related to each metabolizer, one using Casson-Gordon theory and the other  Heegaard Floer theory.  In the current setting, the relevant branched covering has a much larger number of  metabolizers (76 to be precise) and many of these do not offer obstructions to sliceness.  Thus, we  first  eliminate many from consideration, leaving four distinct families to consider.   Once that  is done, topological obstructions are derived from  invariants developed in~\cite{MR1162937}; we build our computations of the relevant Heegaard Floer invariants using a specific computation of~\cite{MR3109864}, but  more detail is required because that paper did not address an issue of Alexander polynomial one knots which we want to include here.

In building this single example in Sections~\ref{sec:obstructions} and \ref{sec:single knot}, we are able to develop the key tools and notation for the general problem. Then, in Section~\ref{sec:infinite family} we build an infinite family of knots used in proving Theorem~\ref{thm:main2}. A key ingredient is to find infinitely many topologically slice knots $K_i$ such that $K_i$ are nontrivial in $\calt / (\text{Fix}(\rho)+\calt_\Delta)$ and the orders of the first homology groups of the 3--fold branched covers of $S^3$ branched over $K_i$ are relatively prime, which is done using certain number theoretic arguments (see Appendix~\ref{sec:appendix}). Another key ingredient is computations of the Heegaard Floer $d$--invariants of the $K_i$, and 
this is accomplished using the powerful methods developed by Cha~\cite{arxiv:1910.14629}.  

\smallskip

\noindent{\it Acknowledgements.}   Conversations with Jae Choon Cha motivated us to reexamine the problem of reversibility in concordance.  Although his work with Min Hoon Kim~\cite{Cha:2017aa} is not used explicitly, it was through that work that we were led to our successful approach.  Conversations with Pat Gilmer, Se-Goo Kim and Aru Ray were also of great value.

%%%%%%%%%%%%SECTION%%%%%%%%%%%
\section{Slicing Obstructions}\label{sec:obstructions}
\subsection{Casson-Gordon invariants} Let $Y_q(K)$ denote the $q$--fold cyclic branched cover of $S^3$ with branch set an arbitrary  knot $K$; we will henceforth assume that  $q$ is an odd prime power.  It is then the case that $Y_q(K)$ is a $\Q$--homology sphere.

For each element $\chi \in H_1(Y_q(K))$ there is a Casson-Gordon invariant $\eta(K, q, \chi )$.   This  invariant  takes values in a Witt group. Later we will describe computable  invariants of this Witt group that provide slicing obstructions, and thus we will not need the precise definition of the group itself.  The invariant $\eta$ was defined in~\cite{MR900252}, where it was denoted~$\tau$.  In that original work, $\chi$  was an element of $\text{Hom}(H_1(Y_q(K)), \Z_{p^r})$ for some prime power $p^r$.  We have chosen $\chi \in H_1(Y_q(K))$; via the nonsingular linking form on $H_1(Y_q(K))$, such a $\chi$ determines a homomorphism in $\text{Hom}(H_1(Y_q(K)), \Q/\Z)$.  By restricting to elements of prime order $p$, the image of the homomorphism is in $\Z_p$, as desired.   We will use Gilmer's theorem~\cite{MR711523} that $\eta$ is additive:  $\eta(K\cs K', q, \chi \oplus \chi' )= \eta(K\, q, \chi ) +\eta( K', q,  \chi' )$.  

\subsection{Heegaard Floer invariants}   If $Y_q(K)$ is a $\Z_2$--homology sphere, there is    a Heegaard Floer invariant $\bar{d}(Y_q(K), \chi )$.   Here we will summarize our notation and some of the essential properties of this invariant; further details will appear later in the exposition.
The Heegaard Floer $d$--invariant, defined in~\cite{MR1957829},  takes values in $\Q$.   It is usually expressed as $d(Y, \spinc)$, where $Y$ is a 3--manifold and  $\spinc$ is a Spin$^c$--structure.  In the setting of $\Z_2$--homology spheres,  Spin$^c$--structures correspond to elements of $H^2(Y) \cong H_1(Y)$, so we will work with the first homology rather than with Spin$^c$.  We then have the definition $\bar{d}(Y,\chi) =d(Y,\chi) -d(Y,0)$. The use of $\bar{d}$ to address issues related to the presence of knots with trivial Alexander polynomial first appeared in~\cite{MR2955197}.
We will use the additivity property $\bar{d}(Y\cs Y',\chi \oplus \chi')=\bar{d}(Y,\chi)+ \bar{d}( Y', \chi')$.  Note that $\bar{d}(Y, 0) = 0$.  One key  result states that if $H_1(Y, \Z_2) = 0$ and $Y = \partial W$, where $W$ is a rational homology four-ball and $\chi $ is   the image of a class in $H_2(W,Y)$, then $d(Y,\chi) = \bar{d}(Y, \chi) = 0$.

\subsection{Obstructions}
The main facts about the   invariants $\eta$ and $\overline{d}$ that we need are stated in the next theorem.

\begin{theorem} \label{thm:obstruct} If $K$ is smoothly slice and $H_1(Y_q(K), \Z_2) =0$, then   there is a subgroup $\calm \in H_1(Y_q(K))$ with the following four properties:  (1) $\calm$  is a metabolizer for the linking form; (2) $\calm$ is invariant under the order $q$ deck transformation of $Y_q(K)$;  
(3) For all $\chi \in \calm$, $\bar{d}(Y_q(K), \chi) =0$; (4) For all $\chi \in \calm$ of prime power order, $\eta( K, q,  \chi ) = 0$. \end{theorem}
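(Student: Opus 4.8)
The plan is to produce, from a slice disk for $K$, a single rational homology four-ball bounded by $Y_q(K)$, and to take $\calm$ to be the kernel of the map it induces on first homology; all four properties will then flow from this one construction. Since $K$ is smoothly slice, fix a smooth slice disk $D \subset B^4$ with $\partial D = K$. The exterior of $D$ has first homology $\Z$, generated by a meridian, so the $q$--fold cyclic branched cover $W = W_q(D)$ of $B^4$ along $D$ is defined, with $\partial W = Y_q(K)$. A standard transfer argument shows that $W$ is a rational homology four-ball, and the order $q$ deck transformation $\tau$ of $W$ restricts on the boundary to the deck transformation of $Y_q(K)$. Set $\calm = \ker\bigl(H_1(Y_q(K)) \to H_1(W)\bigr)$.

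Property (2) is immediate: $\tau$ extends over $W$, so the map $H_1(Y_q(K)) \to H_1(W)$ is $\tau$--equivariant and its kernel $\calm$ is $\tau$--invariant. For property (1), I would invoke the standard fact that for a rational homology four-ball $W$ the kernel of $H_1(\partial W) \to H_1(W)$ is a metabolizer for the linking form: self-annihilation follows from Poincar\'e-Lefschetz duality applied to the long exact sequence of the pair $(W, Y_q(K))$, and the order equality $|\calm|^2 = |H_1(Y_q(K))|$ uses that $W$ is rationally acyclic.

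For property (3), the long exact sequence of the pair identifies $\calm$ with the image of $H_2(W, Y_q(K)) \to H_1(Y_q(K))$, so each $\chi \in \calm$ is the image of a class in $H_2(W, Y_q(K))$; since $H_1(Y_q(K); \Z_2) = 0$ by hypothesis and $W$ is a rational homology four-ball, the result on $\bar d$ stated above applies directly to give $\bar d(Y_q(K), \chi) = 0$. Property (4) is the Casson-Gordon slicing obstruction: viewing $\chi$ as a character via the linking form, the condition $\chi \in \calm = \calm^{\perp}$ says exactly that this character vanishes on $\ker(H_1(Y_q(K)) \to H_1(W))$ and hence extends over $W$, whereupon Casson and Gordon's theorem forces $\eta(K, q, \chi) = 0$ for $\chi$ of prime power order.

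The main obstacle is the careful verification that $W_q(D)$ is a rational homology four-ball, together with the observation that the single subgroup $\calm$ simultaneously plays all four roles; the content of the theorem is precisely that one metabolizer, arising from one four-manifold, controls the Casson-Gordon and Heegaard Floer obstructions at once. The remaining steps are then applications of results that are standard (for (1)), quoted above (for (3)), or classical (for (4)).
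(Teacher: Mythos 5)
Your proposal is correct and is essentially the same argument the paper relies on: the paper does not write out a proof of Theorem~\ref{thm:obstruct}, but instead cites Casson--Gordon for the metabolizer $\calm = \ker\bigl(H_1(Y_q(K)) \to H_1(W_q(D))\bigr)$ arising from the branched cover of the slice disk, \cite{MR1670424} for its equivariance, and \cite{MR2363303} for the $d$--invariant vanishing on classes in the image of $H_2(W, Y_q(K))$ --- exactly the three steps you reconstruct from the single four-manifold $W_q(D)$. The only point worth flagging is that your ``standard transfer argument'' for $W_q(D)$ being a rational homology ball genuinely requires $q$ to be a prime power, a hypothesis the paper has fixed globally at the start of Section~\ref{sec:obstructions}.
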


Recall that a metabolizer for $H_1(Y_q(K))$ is a subgroup $\calm$ satisfying $\calm = \calm^\perp$ with respect to the nonsingular linking form on $H_1(Y_q(K))$.  With regards to the conditions on the Casson-Gordon theorem, this result is essentially as it appeared in~\cite{MR900252}; the equivariance of $\calm$ was noted, for instance, in~\cite{MR1670424}.  The use of $d$--invariants of covers to obstruct slicing was initiated in~\cite{MR2363303}. Notice that in Theorem~\ref{thm:obstruct} we actually have a stronger result that $d(Y_q(K), \chi) =0$ for all $\chi \in \calm$; we  use that $\bar{d}(Y_q(K), \chi) =0$, because this is the needed   slicing obstructions when working modulo $\calt_\Delta$ (see Theorem~\ref{them:modulo trivial} below).
\subsection{Working modulo $\calt_\Delta$}\label{subsec:poly1} 

Suppose $L$ is a knot with trivial Alexander polynomial. Then, we have that $H_1(Y_q(L) )= 0$.  Theorem~\ref{thm:obstruct} will be applied to provide a slicing obstruction.  Since the first homology of $Y_q(L)$ is trivial,  the presence of $L$ does not affect the values of  the $\bar{d}$--invariants or the $\eta$--invariants that we are considering.  Thus $K\cs -\rho(K)\cs L$ is not smoothly slice if we can obstruct $K \cs -\rho(K)$ from being smoothly slice using $\eta$ and $\bar{d}$. We state this as a theorem.

\begin{theorem}\label{them:modulo trivial} If $L$ is a knot with trivial Alexander polynomial and  Theorem~\ref{thm:obstruct} obstructs a knot $K \cs -\rho(K) $ from being smoothly slice, then $K \cs -\rho(K)\cs   L$ is not smoothly slice.
\end{theorem}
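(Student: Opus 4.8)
The plan is to show that the slicing obstructions supplied by Theorem~\ref{thm:obstruct} for $K \cs -\rho(K)$ persist after connect-summing with a knot $L$ of trivial Alexander polynomial. The central observation is that $H_1(Y_q(L)) = 0$, which in turn forces the branched cover of the connect sum to decompose compatibly with the invariants we are using. Concretely, there is a standard identification
\[
Y_q\bigl(K \cs -\rho(K) \cs L\bigr) \;\cong\; Y_q\bigl(K \cs -\rho(K)\bigr) \cs Y_q(L),
\]
coming from the fact that the $q$--fold cyclic branched cover of a connect sum is the connect sum of the branched covers. Since $H_1(Y_q(L)) = 0$, we get $H_1(Y_q(K \cs -\rho(K) \cs L)) \cong H_1(Y_q(K \cs -\rho(K)))$, and this isomorphism respects both the linking form and the order $q$ deck transformation.

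First I would argue by contradiction: assume $K \cs -\rho(K) \cs L$ is smoothly slice. Then Theorem~\ref{thm:obstruct} applies, since $H_1$ with $\Z_2$ coefficients of its branched cover is still trivial (the order of $H_1(Y_q(L))$ being odd, or more directly zero, is harmless). This yields a metabolizer $\calm$ for the linking form on $H_1(Y_q(K \cs -\rho(K) \cs L))$ satisfying properties (1)--(4). Next I would transport $\calm$ across the isomorphism above to obtain a subgroup of $H_1(Y_q(K \cs -\rho(K)))$; because the isomorphism preserves the linking form and the deck transformation, the transported subgroup remains a deck-invariant metabolizer.

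The key step is then to verify that the $\bar d$-- and $\eta$--invariant conditions (3) and (4) are inherited. This is where the additivity properties quoted in the excerpt do the work: since $\bar{d}(Y \cs Y', \chi \oplus \chi') = \bar{d}(Y,\chi) + \bar{d}(Y',\chi')$ and $\bar{d}(Y_q(L), 0) = 0$ (as $H_1(Y_q(L)) = 0$ forces every class to be $0$), any $\chi$ in the transported metabolizer has $\bar{d}(Y_q(K \cs -\rho(K)), \chi) = 0$; the same additivity argument applies to $\eta$ using Gilmer's theorem. Thus the transported subgroup is a metabolizer for $H_1(Y_q(K \cs -\rho(K)))$ witnessing all four conclusions of Theorem~\ref{thm:obstruct}, which contradicts the hypothesis that Theorem~\ref{thm:obstruct} obstructs $K \cs -\rho(K)$ from being slice.

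The main obstacle I anticipate is not in the additivity, which is essentially formal given the stated properties, but in making the identification of homology and its compatibility with the linking form and deck transformation fully rigorous, together with confirming that the $\Z_2$--homology hypothesis of Theorem~\ref{thm:obstruct} is genuinely preserved. One must be careful that \emph{every} metabolizer of the connect sum restricts to a metabolizer on the $K \cs -\rho(K)$ factor rather than merely some metabolizer existing; this is where the triviality of $H_1(Y_q(L))$ is indispensable, since it collapses the potential cross-terms that would otherwise obstruct the splitting.
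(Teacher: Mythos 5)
Your proof is correct and follows essentially the same route as the paper: since $H_1(Y_q(L))=0$, the branched cover of $K \cs -\rho(K) \cs L$ has the same first homology, linking form, and deck action as that of $K \cs -\rho(K)$, so by additivity any metabolizer witnessing the vanishing conclusions of Theorem~\ref{thm:obstruct} for the triple connected sum transports to one for $K \cs -\rho(K)$, contradicting the hypothesis. The one step worth making explicit is that, parallel to $\bar{d}(Y_q(L),0)=0$, your $\eta$ argument needs $\eta(L,q,0)=0$, which holds because Casson--Gordon invariants of trivial characters vanish (as the paper notes in Section~\ref{sec:single knot}).
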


%%%%%%%%%%%%SECTION%%%%%%%%%%%
\section{A single example}\label{sec:single knot}
In this section  we construct  a knot $K$ that is nontrivial in the quotient group $\calt / (\text{Fix}(\rho)+\calt_\Delta)$. 

Figure~\ref{fig:knot} offers a schematic illustration of a knot $R_1$.  More generally, we let $R_n$ denote the similarly constructed knot for which there are $2n+1$ half twists between the two bands.  To simplify notation for now, we abbreviate $R_1$ by $R$ in this section.  We will specify a string orientation for $R$ later.  The construction of $K$ is fairly standard.  By appropriately replacing neighborhoods of the  curves $\alpha$ and $\beta$ with the complements of knots $J_\alpha$ and $J_\beta$, one constructs a new knot denoted $R(J_\alpha, J_\beta)$.  In effect, the bands in the evident Seifert surface for $R$ have the knots $J_\alpha$ and $J_\beta$ placed in them.   To make the notation more concise, we will sometimes abbreviate $R(J_\alpha, J_\beta)$ as $R_\ast$. 

\begin{figure}[h]
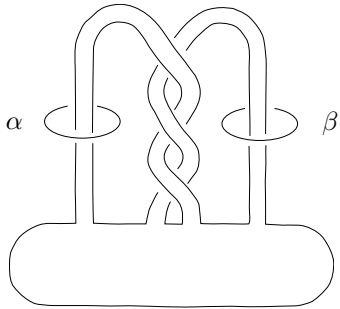

\labellist
\pinlabel $\alpha$ at 10 365
\pinlabel $\beta$ at 640 365
\endlabellist
\fig{.2 }{knot-D}  
\caption{Knot $R_1$}
\label{fig:knot}
\end{figure}

Let $D$ be the knot $Wh(T(2,3), 0)$, the positively clasped, untwisted Whitehead double of the right-handed trefoil knot  $T(2,3)$.  Let $J$ be the knot $Wh(U,5)$, the  positively  clasped $5$--twisted Whitehead  double of unknot, having Seifert matrix 
\[   \begin{pmatrix} -1 & 1 \\
0 & 5 \\
\end{pmatrix}
\]
and Alexander polynomial $5t^2 - 11t+5$.  Our desired knot $K$ is $R(D, J)$:

\begin{theorem}\label{thm:single}  The knot $R(D, J)\ne 0 \in \calt / (\text{Fix}(\rho)+\calt_\Delta)$.  
\end{theorem}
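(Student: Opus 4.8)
The plan is to reduce the statement to a single smooth slicing obstruction and then verify that obstruction using Theorem~\ref{thm:obstruct}. First I would record the group-theoretic reduction. Suppose, for contradiction, that $[K] = x + y$ with $x \in \text{Fix}(\rho)$ and $y \in \calt_\Delta$, where $K = R(D,J)$. Applying $\rho$ and subtracting, and using that $\rho$ preserves $\calt_\Delta$ (reversal does not change the Alexander polynomial), one obtains $[K \cs -\rho(K)] = [K] - [\rho(K)] = y - \rho(y) \in \calt_\Delta$. Thus $K \cs -\rho(K) \cs L$ would be smoothly slice for some knot $L$ with trivial Alexander polynomial. By Theorem~\ref{them:modulo trivial}, it therefore suffices to show that Theorem~\ref{thm:obstruct} obstructs $K \cs -\rho(K)$ from being smoothly slice.

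Next I would set up the branched cover. Since a knot and its reverse agree as unoriented knots, $-\rho(K)$ is the mirror image of $K$, and hence $Y_3(K \cs -\rho(K)) \cong Y_3(K) \cs -Y_3(K)$ as oriented $\Q$--homology spheres, carrying the linking form $\lambda \oplus -\lambda$, where $\lambda$ is the linking form of $Y_3(K)$. Because $D$ has trivial Alexander polynomial, both $H_1(Y_3(K))$ and $\lambda$ are determined by the infection knot $J$ together with $R$, and I would compute them explicitly from the Seifert data of $R$ and $\Delta_J = 5t^2-11t+5$. I would then enumerate the metabolizers of $\lambda \oplus -\lambda$ that are invariant under the order--$3$ deck transformation $\tau$, i.e.\ those satisfying conditions (1) and (2) of Theorem~\ref{thm:obstruct}; this produces the large list referred to in the outline, which after grouping by the $\tau$--action collapses to four families.

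The core of the argument is to eliminate every surviving equivariant metabolizer using conditions (3) and (4). For the families on which the Heegaard Floer invariant is nonzero, I would compute $\bar{d}(Y_3(K),\chi)$ by the method of~\cite{MR3109864}, the essential new ingredient being the contribution of the Alexander--polynomial--one knot $D$: although $D$ is invisible to $H_1$ and to the Casson--Gordon invariants, its smooth non-sliceness shifts the relevant $d$--invariants of the cover, and this shift is what is detected. For the remaining families, on which $\bar{d}$ vanishes, I would extract a topological obstruction from $\eta(K, 3, \chi)$ using the Witt-class invariants of~\cite{MR1162937}, computed from the Seifert form of $J$. String reversal enters precisely here: by additivity, $\eta$ on $K \cs -\rho(K)$ pairs the character on the $K$--summand against its image on the mirror summand through a reversal-twisted identification, so that $\eta$ fails to vanish on these metabolizers, whereas the analogous invariant for the smoothly slice knot $K \cs -K$ would vanish.

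I expect the main obstacle to be the interplay of the two invariants across the full list of equivariant metabolizers. The difficulty is structural as well as computational: the cover $Y_3(K) \cs -Y_3(K)$ is literally the same oriented manifold as the one associated to the smoothly slice knot $K \cs -K$, so the $\bar{d}$--values alone can never separate the two cases, and all distinguishing information must be drawn from the knot-dependent invariant $\eta$ together with the requirement that $\eta$ and $\bar{d}$ vanish simultaneously on a \emph{single} metabolizer. Verifying that no such common metabolizer exists---that the $\bar{d}$--vanishing locus is disjoint from the $\eta$--vanishing locus once the reversal twist is taken into account---is the crux, and carrying out the $\bar{d}$ computation with $D$ included together with the $\eta$ computation on each of the four surviving families is where the real work lies.
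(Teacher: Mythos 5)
Your skeleton matches the paper's: reduce via Theorem~\ref{them:modulo trivial} to showing that Theorem~\ref{thm:obstruct} obstructs $K \cs -\rho(K)$, enumerate the equivariant metabolizers (the paper's Lemma~\ref{lem:cases} gives exactly your four families), and kill each family using $\eta$ (via the discriminant of $J$, which is $13\cdot 97$ and not a $7$--norm) or $\bar{d}$ (via the Cochran--Harvey--Horn computation detecting $D$). But there is a genuine gap: you never identify the mechanism by which string reversal changes anything, and your closing paragraph locates it in the wrong place. The mechanism is that reversal \emph{inverts the deck transformation}: under the identification $Y_3(-\rho(K)) = -Y_3(K)$, the canonical generator of the deck group becomes its inverse, so the $2$-- and $4$--eigenspaces of the second summand are swapped ($\lb'$ generates the $2$--eigenspace and $\la'$ the $4$--eigenspace). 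This swap is the \emph{only} difference between $K \cs -\rho(K)$ and the smoothly slice knot $K \cs -K$: the oriented cover, the linking form, the $\bar{d}$--values, \emph{and} the $\eta$--values are literally identical for the two knots (Lemma~\ref{lem:meta} holds verbatim with $-K$ in place of $-\rho(K)$; string orientation is irrelevant to those equations). What differs is which metabolizers are equivariant: for $K \cs -K$ the diagonal $\left< \la + \la', \lb + \lb'\right>$ is equivariant and both $\eta$ and $\bar{d}$ vanish identically on it, whereas for $K \cs -\rho(K)$ the swapped eigenspaces forbid the diagonal and force the four families of Lemma~\ref{lem:cases}. If you carry out your enumeration without tracking this inversion, the diagonal survives your equivariance test and the proof cannot close --- as indeed it must not, since the untwisted data describes the slice knot $K \cs -K$. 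You assert the correct outcome (four families) but the step that produces it, and that carries all of the reversal-sensitivity, is absent.

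Consequently, your structural claim that ``the $\bar{d}$--values alone can never separate the two cases, and all distinguishing information must be drawn from the knot-dependent invariant $\eta$'' is wrong, and following it would break the proof. By the same token that you apply to $\bar{d}$, the $\eta$--values are equally reversal-blind; both invariants detect reversal only through the equivariance constraint on metabolizers. Moreover, $\bar{d}$ is indispensable: precisely because of the eigenspace swap, $\left<\lb,\lb'\right>$ \emph{is} an equivariant metabolizer for $K \cs -\rho(K)$, and $\eta$ vanishes identically on it, since $\eta(K,3,r\lb) = 0$ for all $r$ ($D$ is topologically slice, so $\beta$ bounds a disk in the complement of a topological slice disk, and the Casson--Gordon vanishing theorem applies in the locally flat category). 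The only thing ruling out this metabolizer is the Heegaard Floer computation $\bar{d}(Y_3(K),\lb) \ne 0$ of Lemma~\ref{lem:nonzero}. Symmetrically, $\eta$ is indispensable for $\left<\la,\la'\right>$, on which $\bar{d}$ vanishes identically. The two invariants cannot be prioritized as you propose: the proof needs each one exactly on the family where the other is blind.
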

The rest of this section is devoted to proving Theorem~\ref{thm:single}. Let $K=R(D,J)$. The knot $D$ has Alexander polynomial $\Delta_D(t) = 1$.  According to Freedman's theorem~\cite{MR679066, MR1201584},   $D$ is topologically slice.  A standard argument then shows that  $K$ is also topologically slice: $K  \in \calt$.

To prove Theorem~\ref{thm:single} it suffices to show the following: for any knot $L$ with $\Delta_L(t)=1$, 
\[
K\cs - \rho(K)\cs L \ne 0  \in \calt.
\]
By Theorem~\ref{them:modulo trivial}, we only need to show the following theorem:
\begin{theorem}\label{thm:single2} Theoreom~\ref{thm:obstruct} obstructs the knot $K\cs - \rho(K)$ from being smoothly slice. 
\end{theorem}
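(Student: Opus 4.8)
The plan is to apply Theorem~\ref{thm:obstruct} with $q=3$ to $W:=K\cs -\rho(K)$ and argue by contradiction: if $W$ were smoothly slice, a metabolizer $\calm$ with properties (1)--(4) would exist, and I will show that no subgroup can satisfy all four. First I would record the algebra of the branched cover. Because $D$ and $J$ are tied into nullhomologous band curves of the genus-one knot $R$, these winding-number-zero infections change neither the Seifert form of $R$ nor $H_1$ of the branched cover; consequently $H_1(Y_3(K))$, its linking form, and the order-three deck action all coincide with those of $R$. In particular I would verify from $\Delta_R$ that $\lvert H_1(Y_3(K))\rvert$ is odd, so that $H_1(Y_3(W),\Z_2)=0$ and Theorem~\ref{thm:obstruct} applies. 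Writing $A=H_1(Y_3(K))$, one has $Y_3(W)=Y_3(K)\cs Y_3(-\rho(K))$ and $H_1(Y_3(W))\cong A\oplus A'$, where the second summand carries the negated linking form, so the total form is split (metabolic). The deck transformation makes $A$ a module over $\Z[\omega]=\Z[t]/(t^2+t+1)$, and property~(2) forces $\calm$ to be a $\Z[\omega]$--submodule; this is the single most powerful constraint, and I would use it throughout.

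Next I would enumerate the metabolizers. Listing the self-annihilating subgroups of $A\oplus A'$ produces the $76$ candidates satisfying~(1) referred to in the outline, and the bulk of these are eliminated cheaply because they fail the deck-invariance condition~(2); what survives, after organizing the remainder by how $\calm$ projects to the two summands $A$ and $A'$, is a short list of four families. The ``diagonal'' family---graphs of isomorphisms $A\to A'$ intertwining the two linking forms---is the family one expects from the fact that $K\cs -K$ is slice, and it is here that the reversal does its work: the identification $-\rho(K)=\overline{K}$ twists the correspondence between the two bands relative to the isometry of linking forms, so the diagonal metabolizers do not pair the $\alpha$--band data of $K$ with that of $-\rho(K)$.

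With the four families in hand, I would run the two obstructions against each, using Gilmer's additivity $\eta(K\cs -\rho(K),3,\chi\oplus\chi')=\eta(K,3,\chi)+\eta(-\rho(K),3,\chi')$ together with the corresponding additivity of $\overline{d}$. The Casson--Gordon invariant $\eta(K,3,\cdot)$ is sensitive only to the $\beta$--band and is read off from the infecting knot $J=Wh(U,5)$ through its signature data, while $\overline{d}(Y_3(K),\cdot)$ is sensitive only to the $\alpha$--band and is read off from $D=Wh(T(2,3),0)$, building on the surgery computation of~\cite{MR3109864}. For each surviving family I would produce an explicit $\chi\in\calm$ on which one of these invariants is nonzero: on the off-diagonal families a single-summand character already detects $J$ or $D$, whereas on the diagonal family the reversal-induced mismatch prevents the two contributions from cancelling, so either the $\eta$ term or the $\overline{d}$ term is left nonzero. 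In every case property~(3) or property~(4) fails, contradicting the existence of $\calm$ and completing the proof.

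The main obstacle is the diagonal family together with the $\overline{d}$ computation. Verifying that the reversal genuinely blocks cancellation requires pinning down the $\Z[\omega]$--module isomorphism type of $A$ and the precise way $-\rho(K)$ permutes the band data, and then the $d$--invariant calculation must be carried out carefully enough to cover the Alexander-polynomial-one input $D$---exactly the point the outline flags as missing from~\cite{MR3109864}. I expect the enumeration to be essentially mechanical once the module structure of $A$ is fixed, but the bookkeeping that matches each of the four families to a firing obstruction, ruling out an accidental simultaneous vanishing of both $\eta$ and $\overline{d}$, is where the real content lies.
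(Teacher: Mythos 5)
Your overall architecture coincides with the paper's: the same reduction via Theorem~\ref{thm:obstruct} with $q=3$, the same identification of $H_1(Y_3(K))\cong \Z_7\oplus\Z_7$ with its deck-transformation ($\Z[\omega]$--module) structure, the same observation that reversal inverts the deck action and thereby forces the equivariant ``diagonal'' metabolizers to pair the $\alpha$--data of $K$ with the $\beta$--data of $-\rho(K)$, and the same division of labor in which Casson--Gordon invariants are to detect $J$ while $\bar{d}$ detects $D$. The genuine gap is in the one place where you specify \emph{how} the Casson--Gordon obstruction fires: you propose to read $\eta(K,3,\cdot)$ off of $J=Wh(U,5)$ ``through its signature data.'' That step fails. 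The Alexander polynomial $\Delta_J(t)=5t^2-11t+5$ has both roots real, $(11\pm\sqrt{21})/10$, hence no roots on the unit circle, and the symmetrized Seifert form of $J$ has signature $0$; consequently the Levine--Tristram signature function of $J$ is identically zero. By the Gilmer--Litherland satellite formulas, every signature-type Casson--Gordon invariant of $K$ then vanishes ($D$ contributes nothing either, since $\Delta_D=1$). This is precisely why the paper does not use signatures in Section~\ref{sec:single knot}: its Lemma~\ref{lem:delta} extracts instead the \emph{discriminant} of the Witt class, via~\cite{MR1162937} --- if $\eta(K,3,\la)=0$ then $\Delta_7(J)=\sqrt{\big|H_1(Y_3(J))\big|}=(13)(97)$ would have to be a $7$--norm, which it is not because $13\equiv -1 \pmod 7$ has even order in $\Z_7^*$. (Signatures are usable only in Section~\ref{sec:infinite family}, where the infection knot is changed to ${\rm Wh}(T(2,-3),-1)$, whose signatures do not vanish.)

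This cannot be absorbed into ``bookkeeping,'' because one of the four families genuinely needs the Casson--Gordon input. On the metabolizer $\left<\la,\la'\right>$ of Lemma~\ref{lem:cases}(3), every class has the form $r\la+s\la'$, and by Lemma~\ref{lem:trivial} together with Lemma~\ref{lem:meta} all $\bar{d}$--invariants vanish on such classes; as explained above, all signature contributions vanish there as well. So with signatures in place of the discriminant, your case analysis yields $0=0$ on this family and the proof collapses exactly there. (Conversely, the diagonal family you single out as the hard case is in fact killable by $\bar{d}$ alone: on $\left<\la+r\lb',\lb+r^{-1}\la'\right>$ the class $\lb+r^{-1}\la'$ satisfies $\bar{d}(Y_3(K),\lb)+\bar{d}(Y_3(-\rho(K)),r^{-1}\la')=\bar{d}(Y_3(K),\lb)\le -3/2$, since the $\la'$--term is zero.) The repair is to replace ``signature data of $J$'' by the discriminant/$7$--norm criterion of~\cite{MR1162937}, which is the paper's actual mechanism.
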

The following subsections  present the proof of  Theorem~\ref{thm:single2}.

%%%%%%%%%%%%SUBSECTION%%%%%%%%%%%

\subsection{The homology of the branched cover}\label{subsec:branch}
We will now work exclusively with  $q = 3$.  Recall that we are using the  abbreviation    $R_* = R(J_\alpha,J_\beta)$.  A standard knot theoretic computation shows that for arbitrary $J_\alpha$ and $J_\beta$, $H_1(Y_3(R_*))\cong \Z_7 \oplus \Z_7$, generated by  $\la$ and $\lb$, chosen lifts of the $\alpha $ and $\beta$. 
Furthermore, viewing $H_1(Y_3(R_*))$ as a vector space over $\Z_7$, the first homology group splits into a $2$--eigenspace $E_2$ and a $4$--eigenspace  $E_4$ with respect to the order three deck transformation of $Y_3(R_*)$.   We have not yet noted the choice of orientation of $R_*$.  For one choice, which we now make, we have that $E_2$ is generated by $\la$ and $E_4$ is generated by $\lb$.

With respect to the $\Z_7$--valued  linking form,  $\la$ and $\lb$ are eigenvectors and thus $\rm{lk}(\la, \la) =  0 = \rm{lk}(\lb, \lb) $.  By replacing a generator with a multiple, we can assume $\text{lk}( \la, \lb) = 1$.

If $m$ is an oriented meridian for $R_*$, then $m$ is also an oriented meridian for $-R_*$.  (Note: $-R_*$ is built by reversing the ambient orientation of $S^3$ and then reversing the orientation of $R_*$.  The effect is to reverse the meridian twice.) In particular, $Y_3(R_*)$ and $Y_3(-R_*)$ are the same space with the same deck transformation. In particular, $H_1( Y_3(-R_*))$ has the same  splitting into  eigenspaces, $E_2 \oplus E_4$, which are generated by $\la$ and $\lb$.  Reversing the orientation of $R_*$ has the effect of inverting the deck transformation, so $H_1( Y_3(-\rho(R_*)))$  splits as a direct sum of a  2--eigenspace $E_2'$ and a 4--eigenspace $E_4'$, generated by $\lb$ and $\la$, respectively.  (That is, the roles of $\la$ and $\lb$ have been reversed.)  Henceforth, when we are working with $\rho(R_*)$, we will write $E_2'$, generated by $\lb'$, and  $E_4'$, generated by $\la'$.

We now consider the action of the deck transformation on $H_1( Y_3(R_* \cs -\rho(R_*)))$.  It has minimal polynomial $(t-2)(t-4)$.  Thus, any invariant $\Z_7$--subspace $\calm$ of $H_1( Y_3(R_* \cs -\rho(R_*)))$ splits into eigenspaces.  Here are all the possibilities.

\begin{lemma}\label{lem:cases}  The set of all equivariant metabolizers of $H_1(Y_3( R_* \cs -\rho(R_*)))$ are given by the following spans: 

\begin{enumerate}

\item  $\left< \la, \lb'\right>$; the $2$--eigenspace.

\item  $\left< \lb, \la'\right>$; the $4$--eigenspace.

\item   $\left< \la, \la'\right>$ or $\left< \lb, \lb'\right>$; one ``pure'' 2--eigenvector and one ``pure'' 4--eigenvector.

\item  $\left< \la + r \lb',  \lb + r^{-1} \la'\right>$, where $r \ne 0 \in \Z_7$.

\end{enumerate}

\end{lemma}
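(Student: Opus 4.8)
The plan is to reduce the statement to a finite computation in linear algebra over $\Z_7$. Write $V = H_1(Y_3(R_* \cs -\rho(R_*)))$, a four--dimensional $\Z_7$--vector space with ordered basis $\la, \lb, \lb', \la'$. Since the branched cover of a connected sum is the connected sum of the branched covers, $V$ splits as an \emph{orthogonal} direct sum of $H_1(Y_3(R_*)) = \langle \la, \lb\rangle$ and $H_1(Y_3(-\rho(R_*))) = \langle \lb', \la'\rangle$; in particular every linking pairing between a class from one summand and a class from the other vanishes. On the first summand the normalization already fixed gives $\mathrm{lk}(\la,\la) = \mathrm{lk}(\lb,\lb) = 0$ and $\mathrm{lk}(\la,\lb) = 1$. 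For the second summand I would track orientations: forming $-\rho(R_*)$ reverses the ambient orientation of $S^3$, so $Y_3(-\rho(R_*))$ is $Y_3(R_*)$ with its orientation reversed (which negates the linking form) and, as already recorded in the excerpt, with the deck transformation inverted (which interchanges the $2$-- and $4$--eigenspaces). Under the resulting identifications $\la'\leftrightarrow\la$ and $\lb'\leftrightarrow\lb$ this yields $\mathrm{lk}(\la',\la') = \mathrm{lk}(\lb',\lb') = 0$ and $\mathrm{lk}(\la', \lb') = -1$.

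Next I would record the two structural facts that drive the classification. First, the deck transformation $\tau$ has minimal polynomial $(t-2)(t-4)$, which is separable over $\Z_7$, so $\tau$ is diagonalizable and $V = E_2 \oplus E_4$ with $E_2 = \langle \la, \lb'\rangle$ and $E_4 = \langle \lb, \la'\rangle$; consequently any $\tau$--invariant subspace $\calm$ decomposes as $\calm = (\calm \cap E_2) \oplus (\calm \cap E_4)$. Second, the linking form is $\tau$--invariant, so for $u, v$ in a single eigenspace with eigenvalue $c$ one has $\mathrm{lk}(u,v) = \mathrm{lk}(\tau u, \tau v) = c^2\,\mathrm{lk}(u,v)$; since $2^2 = 4$ and $4^2 = 2$ are both $\ne 1$ in $\Z_7$, each of $E_2$ and $E_4$ is totally isotropic. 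As the form is nonsingular and $V = E_2\oplus E_4$ with both summands isotropic, the induced pairing $E_2 \times E_4 \to \Z_7$ is perfect; in the chosen bases its matrix is $\mathrm{diag}(1,-1)$, that is, $\mathrm{lk}(x\la + y\lb',\, z\lb + w\la') = xz - yw$.

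With this in hand the proof becomes an enumeration. A metabolizer has order $7^2$, hence is a two--dimensional isotropic subspace, and being equivariant it is of the form above with $\dim(\calm\cap E_2) + \dim(\calm\cap E_4) = 2$. The splittings $(2,0)$ and $(0,2)$ force $\calm = E_2$ and $\calm = E_4$, which are isotropic by the eigenvalue argument; these are cases (1) and (2). In the remaining case $(1,1)$ we have $\calm = \langle u, v\rangle$ with $0\ne u = x\la + y\lb' \in E_2$ and $0 \ne v = z\lb + w\la'\in E_4$; the self--pairings vanish automatically, so isotropy reduces to the single equation $xz = yw$. For each line $(x:y) \in \pp^1(\Z_7)$ this determines a unique line $(z:w)$: the line $\langle \la\rangle$ pairs with $\langle \la'\rangle$ and $\langle \lb'\rangle$ with $\langle \lb\rangle$, giving the two ``pure'' metabolizers of (3), while $\langle \la + r\lb'\rangle$ with $r \ne 0$ pairs with $\langle \lb + r^{-1}\la'\rangle$, giving the six metabolizers of (4). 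These ten subspaces exhaust the equivariant metabolizers.

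I expect the only genuine obstacle to be the bookkeeping of the first paragraph, namely transporting the linking form correctly to the $-\rho(R_*)$ summand. The sign there is not cosmetic: it is precisely $\mathrm{lk}(\la',\lb') = -1$ that makes the second generator in case (4) read $\lb + r^{-1}\la'$ rather than $\lb - r^{-1}\la'$, and since $r$ ranges over all nonzero elements these two putative families are genuinely different sets of subspaces. Once the form is pinned down, everything that follows is routine finite linear algebra over $\Z_7$.
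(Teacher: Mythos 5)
Your proof is correct and follows essentially the same route as the paper's: use equivariance to split a metabolizer into its eigenspace components, handle the two full-eigenspace cases, and in the mixed case impose the linking condition $xz - yw = 0$ and enumerate. The additional care you take in pinning down $\mathrm{lk}(\la',\lb') = -1$ and the total isotropy of the eigenspaces is detail the paper leaves implicit, but the underlying argument is identical.
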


\begin{proof}  Cases (1) and (2) reflect the possibility that $\calm$ is a 2--dimensional eigenspace.  The alternative is that $\calm$ contains a 2--eigenvector and a 4--eigenvector.  In general, these would be spanned by vectors of the form $x\la + y\lb'$ and $z\lb + w\la'$.  The condition that these have linking number 0 is given by $xz - yw = 0 \mod 7$.    If $x\ne 0$, then by taking a multiple we can assume $x=1$.  Similarly, if $z \ne 0$, we can assume $z=1$.  With this, reducing to cases (3) and (4) is straightforward.
\end{proof}

To complete the proof of  Theorem~\ref{thm:single2}, we need to show that slicing obstructions arising from each of the metabolizers in Lemma~\ref{lem:cases} are nonzero.  The proof of this depends on additivity and the computation of specific values of invariants.  We will be  able to restrict our attention to a single summand by using the next lemma.  Notice that  string orientation is not relevant to these equations.    The following result is then seen to be trivial; it simply states that reversing the orientation of a space changes the sign of the relevant invariants.

\begin{lemma}\label{lem:meta} We have the following equalities:

\begin{enumerate}

\item $\eta(-\rho(R_*) , q, \la') =  - \eta(R_* , q, \la) $.

\item $\eta(-\rho(R_*) , q, \lb') =  - \eta(R_* , q, \lb) $. 

\item $\bar{d}(Y_3(-\rho(R_*), \la')) =  -\bar{d}(Y_3(R_*), \la) $.\ 

\item $\bar{d}(Y_3(-\rho(R_*), \lb')) =  - \bar{d}(Y_3(R_*), \lb) $. 

\end{enumerate}

\end{lemma}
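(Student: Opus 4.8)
The plan is to reduce all four equalities to the single principle that the Casson--Gordon and Heegaard Floer invariants change sign under orientation reversal of the ambient three--sphere. The first step is to identify the knot $-\rho(R_*)$ concretely. Reversing the string orientation of $\rho(R_*)$ restores the string orientation of $R_*$, so $-\rho(R_*)$ is nothing but the mirror image $\overline{R_*}$ carrying the same string orientation as $R_*$. Consequently the branched cover satisfies $Y_3(-\rho(R_*)) = Y_3(\overline{R_*}) = -Y_3(R_*)$ as oriented three--manifolds, the mirror operation reversing the orientation of the cover. Under the underlying identity homeomorphism of spaces the chosen lifts $\la$ and $\lb$ of $\alpha$ and $\beta$ are carried to $\la'$ and $\lb'$; since mirroring reverses the ambient meridian it inverts the deck transformation, and because $2^{-1} = 4$ in $\Z_7$ this is precisely why the eigenspace labels are interchanged (the $2$--eigenvector $\la$ becoming the $4$--eigenvector $\la'$, and so on), consistent with the homology computation already recorded in Section~\ref{subsec:branch}.

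With this identification in hand, parts (3) and (4) follow from the standard behaviour of the $d$--invariant under orientation reversal, namely $d(-Y,\spinc) = -d(Y,\spinc)$ for the corresponding Spin$^c$--structures. Passing to $\bar{d}(Y,\chi) = d(Y,\chi) - d(Y,0)$ and using that the zero class is preserved, I would conclude $\bar{d}(Y_3(-\rho(R_*)),\la') = -\bar{d}(Y_3(R_*),\la)$ and likewise for $\lb'$, once I confirm that the Spin$^c$--structure determined by $\la'$ on $-Y_3(R_*)$ matches the one determined by $\la$ on $Y_3(R_*)$ under the identification of first homology.

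For parts (1) and (2) I would invoke the analogous sign--change property of the Casson--Gordon invariant under mirroring, $\eta(\overline{K},q,\chi) = -\eta(K,q,\chi)$ for the character corresponding to $\chi$, which reflects that the relevant signatures of a bounding four--manifold reverse sign when its orientation is reversed. Applying this with $K = R_*$ and the characters $\la'$, $\lb'$ identified with $\la$, $\lb$ yields the two displayed equalities.

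The main obstacle I anticipate is purely bookkeeping: verifying that the identification of $H_1(Y_3(-\rho(R_*)))$ with $H_1(Y_3(R_*))$ genuinely sends $\la' \mapsto \la$ and $\lb' \mapsto \lb$ at the level of characters, and compatibly with the linking form, so that the orientation--reversal sign rules apply to matching data rather than to classes differing by the deck action or by an overall sign. Once that correspondence is pinned down, all four equalities are immediate, as the statement of the lemma already anticipates.
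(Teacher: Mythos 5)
Your proposal is correct and follows essentially the same route as the paper: the paper treats this lemma as immediate, having already established in Section~\ref{subsec:branch} exactly the identification you spell out (that $-\rho(R_*)$ is the mirror of $R_*$ with its original string orientation, so $Y_3(-\rho(R_*)) = -Y_3(R_*)$ with inverted deck transformation, whence $\la' = \la$ and $\lb' = \lb$ as classes but with eigenvalues swapped since $2^{-1}=4$ in $\Z_7$), after which all four equalities are just the sign change of $\eta$ and $\bar d$ under orientation reversal. The ``bookkeeping obstacle'' you flag at the end is precisely this identification, which your first paragraph already handles correctly, so there is no gap.
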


Recall that $K=R_*$ with the choice $J_\alpha=D$ and $J_\beta=J$. With Lemma~\ref{lem:meta}, we see that the proof   of Theorem~\ref{thm:single2} is reduced to the following lemma, whose proof is postponed to the next subsection.

\begin{lemma}\label{lem:nonzero}  For all $r \not\equiv 0 \mod 7$, we have the following:

\begin{enumerate}

\item $\eta(K, 3, r  \la) \ne 0$. 

\item $\eta(K, 3, r \lb) =  0$.

\item $\bar{d}(Y_3(K), r \lb) \ne 0$.  

\end{enumerate} 

\end{lemma}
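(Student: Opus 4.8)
The plan is to establish the three claims of Lemma~\ref{lem:nonzero} by computing Casson-Gordon $\eta$-invariants and Heegaard Floer $\bar{d}$-invariants for the specific knot $K = R(D,J)$, exploiting the eigenspace structure already set up. Recall that $\la$ generates the $2$-eigenspace $E_2$ and carries the knot $D = Wh(T(2,3),0)$ in the $\alpha$-band, while $\lb$ generates the $4$-eigenspace $E_4$ and carries $J = Wh(U,5)$ in the $\beta$-band. The key heuristic is that a character supported on $\la$ ``sees'' only $D$, whereas a character supported on $\lb$ ``sees'' only $J$; since $D$ has Alexander polynomial $1$ while $J$ has Alexander polynomial $5t^2 - 11t + 5$, the two families of characters interact with the invariants in qualitatively different ways. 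This is why a single invariant cannot handle all metabolizers: the Casson-Gordon machinery detects the $\la$-direction (claim (1)), while the $d$-invariant must be used in the $\lb$-direction (claim (3)), precisely because (2) asserts that $\eta$ vanishes there.

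First I would compute $\eta(K, 3, r\la)$. Since $\la$ lies in the band containing $D$, the relevant cyclic branched cover and its character restrict, via the satellite construction, to data governed by $D$ together with the ambient knot $R$. I expect the Casson-Gordon invariant to decompose into a contribution from $R$ itself and a signature contribution coming from $D$ viewed through the character; the nonvanishing in claim (1) should follow from a signature computation of the trefoil (the pattern $D = Wh(T(2,3),0)$ ultimately injects trefoil signatures into the Witt-class invariant). For claim (2), the vanishing of $\eta(K,3,r\lb)$, the point is that the character on $\lb$ passes through the band carrying $J$, and the twisted signature obstruction coming from $J$ must be arranged to vanish — this is why $J$ was chosen with its particular Seifert matrix; its Casson-Gordon contribution in the relevant character direction is trivial even though its Alexander polynomial is nontrivial. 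I would verify (2) by a direct signature-jump calculation showing the associated Witt class is zero.

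The main work is claim (3), the nonvanishing of $\bar{d}(Y_3(K), r\lb)$, and this is where I expect the principal obstacle. Here the Casson-Gordon invariant is useless (it vanishes by (2)), so the entire obstruction rests on the Heegaard Floer $d$-invariant of the $3$-fold branched cover in the $\lb$-direction. The computation follows the strategy of~\cite{MR3109864}: one realizes $Y_3(K)$ by surgery and uses the fact that the $\beta$-band carries the knot $J$, whose nontrivial $\bar{d}$-contribution should survive. The difficulty is twofold. First, one must compute $d$-invariants of a $\Z_7 \oplus \Z_7$ homology manifold across a full orbit of nonzero characters $r\lb$ for $r \not\equiv 0 \bmod 7$, and show none of them vanish; this requires either an explicit lattice/mapping-cone computation or an appeal to the satellite formula relating $\bar{d}(Y_3(K), r\lb)$ to $\bar{d}$-invariants associated to $J$. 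Second, and this is the subtlety flagged in the outline, one must handle the Alexander-polynomial-one issue that~\cite{MR3109864} did not address: because we are working modulo $\calt_\Delta$ and using $\bar{d}$ rather than $d$, I would need to confirm that the subtraction $\bar{d}(Y,\chi) = d(Y,\chi) - d(Y,0)$ does not kill the obstruction, i.e.\ that $d(Y_3(K), r\lb) \neq d(Y_3(K), 0)$. I anticipate this last inequality is the crux, requiring a careful sign-definite computation of the Heegaard Floer data of $J = Wh(U,5)$, most efficiently carried out via the methods of Cha~\cite{arxiv:1910.14629} referenced for the infinite-family case.
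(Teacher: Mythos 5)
Your proposal has the character--band correspondence exactly transposed, and this is fatal to all three of your proposed computations. Because $\la$ and $\lb$ are isotropic eigenvectors for the linking form ($\mathrm{lk}(\la,\la)=0=\mathrm{lk}(\lb,\lb)$, $\mathrm{lk}(\la,\lb)=1$), the character determined by $\la$ vanishes on the lifts of $\alpha$ (which are multiples of $\la$) and is nontrivial on the lifts of $\beta$; so in the Gilmer--Litherland satellite formulas \cite{MR711523,MR780587}, $\eta(K,3,\la)$ sees the knot $J$ in the $\beta$--band, while $\eta(K,3,\lb)$ and $d(Y_3(K),\lb)$ see the knot $D$ in the $\alpha$--band --- not the other way around, as you assert. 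Concretely, for claim (1): infection along $\alpha$ by $D$ contributes to the Witt class only through the Seifert-form invariants of $D$ itself, never through its companion $T(2,3)$, and since $D$ has Alexander polynomial one and metabolic Seifert form, that contribution is trivial; no trefoil signatures can be ``injected,'' and your mechanism yields $0$ rather than a nonvanishing obstruction. (This invisibility of $\Delta=1$ infections is precisely why Casson--Gordon theory cannot probe $\calt$, as noted in the introduction.) The actual proof of (1) detects $J$, and even there signatures are useless: $\Delta_J(t)=5t^2-11t+5$ has no roots on the unit circle, so all Levine--Tristram signatures of $J$ vanish. The nonvanishing comes instead from the \emph{discriminant} invariant of \cite{MR1162937} (Lemma~\ref{lem:delta}): $\Delta_7(J)=\sqrt{\lvert H_1(Y_3(J))\rvert}=(13)(97)$ fails to be a $7$--norm because $13\equiv -1 \pmod 7$ has even order in $\Z_7^*$. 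Thus a ``signature-jump calculation,'' in either band, cannot prove (1), and your proposed verification of (2) rests on the same misreading: the vanishing of $\eta(K,3,\lb)$ has nothing to do with a special choice of Seifert matrix for $J$ (whose contribution simply does not enter at $\lb$); it follows because $K$ is topologically slice (as $\Delta_D=1$, by Freedman) with $\beta$ bounding a slice disk in the complement of the slice disk, so Casson--Gordon's theorem in the topological category applies.

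For claim (3) the transposition again sends you down a dead end: you propose a ``careful sign-definite computation of the Heegaard Floer data of $J=Wh(U,5)$,'' but $J$ is invisible to the $d$--invariants. Since $J$ can be unknotted both by changing only negative crossings and by changing only positive crossings, one obtains definite cobordisms in both directions and hence $d(Y_3(K),x)=d(Y_3(R(D,U)),x)$ for every class $x$ (Lemma~\ref{lem:d-unknot}); the knot $J$ drops out entirely, and no amount of Floer data for $J$ can produce an obstruction. The nonvanishing in (3) detects $D$: one quotes $d(Y_3(R(D,U)),\lb)\le -3/2$ from \cite[Appendix~A]{MR3109864}. Your concern that the subtraction $\bar d(Y,\chi)=d(Y,\chi)-d(Y,0)$ might kill the obstruction is legitimate, but it is resolved by the smooth sliceness of $R(D,U)$, not by properties of $J$: the classes $r\la$, including $0$, lie in the metabolizer coming from the rational homology ball that $Y_3(R(D,U))$ bounds, giving $d(Y_3(K),0)=0$ (Lemma~\ref{lem:trivial}), whence $\bar d(Y_3(K),\lb)=d(Y_3(R(D,U)),\lb)\neq 0$. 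In short, the example is engineered so that $D$ ($\Delta=1$) is invisible to Casson--Gordon theory but visible to $d$--invariants, while $J$ is invisible to $d$--invariants but visible to the Casson--Gordon discriminant; your proposal assigns each invariant to the band where it necessarily sees nothing.
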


We finish the proof of Theorem~\ref{thm:single2} modulo the proof of Lemma~\ref{lem:nonzero}.  It is shown that for each metabolizer listed in Lemma~\ref{lem:cases},    the vanishing of the associated  slicing obstructions arising from  $\eta$--invariants  and $\overline{d}$--invariants, as provided by   Theorem~\ref{thm:obstruct},  leads to a contradiction of Lemma~\ref{lem:nonzero}.

\begin{enumerate}

\item  $\left< \la, \lb'\right>$.   If $\la$ is in the metabolizer, then the vanishing of the slicing obstructions includes the statement: $\eta(K , 3, \la) +    \eta(-\rho(K), 3, 0) = 0$.  Casson-Gordon invariants for trivial characters always vanish, so this contradicts  Lemma~\ref{lem:nonzero}~(1).

\item  $\left< \lb, \la'\right>$.     Here we use the element $\la'$ and the vanishing of the slicing obstruction to  conclude that $\eta(K , 3, 0) +    \eta(-\rho(K) , 3, \la') = 0$.   As in the last case, this contradicts Lemma~\ref{lem:nonzero}~(1) after using Lemma~\ref{lem:meta} to replace the $-\rho(K)$ term with one involving  $K$.

\item   $\left< \la, \la'\right>$.   This can be handled in the same way as  the previous two cases.

\item   $\left< \lb, \lb'\right>$.  Considering $\lb$, we would have  $\bar{d}(Y_3(K) , \lb) +    \bar{d}(Y_3(-\rho(K)) , 0) = 0$.   This falls to Lemma~\ref{lem:nonzero}~(3).

\item  $\left< \la + r \lb',  \lb + r^{-1} \la'\right>$, where $r \ne 0 \in \Z_7$.   In this case, this  leads to the equation  $\eta(K , 3, \la) +    \eta(K , 3, r \lb') = 0$.  This is addressed using Lemma~\ref{lem:nonzero}~(1) and (2).

\end{enumerate}

%%%%%%%%%%%%SUBSECTION%%%%%%%%%%%

\subsection{Casson-Gordon and Heegaard Floer obstructions}\label{subsec:cg-hf}
In this subsection, we give a proof of Lemma~\ref{lem:nonzero}, which will complete the proof of Theorem~\ref{thm:single2}. The Casson-Gordon invariant we will use in this section is a {\it discriminant invariant}, which is determined by the value of $\eta$.  Details were presented in~\cite{MR1162937}.  The knots used there were almost identical to those we are considering, and~\cite{MR1162937} can serve as a complete reference.  (A similar calculation arises in~\cite[Appendix B]{MR3109864}.)
\smallskip

{\bf (1) $\boldsymbol {\eta(K, 3, r \la) \ne 0}$:}  The invariant $\eta$ is conjugation invariant. Therefore $\eta(K, 3, - \la) = \eta(K, 3,   \la) $.  Since $\la$ is a 2--eigenvector of the order three deck transformation, we have $\eta(K, 3,   \la) = \eta(K, 3,  2 \la) = \eta(K, 3,  4 \la)$.  Combining these, we have reduced the proof   to showing $ \eta(K, 3,   \la) \ne 0$. 

Observations of Gilmer~\cite{MR656619,MR711523} and Litherland~\cite{MR780587} relate the value of $\eta(K, 3, \la)  $ to that of $\eta(R(D,U), 3, \la) $ and classical invariants of $J$.  Since $R(D,U)$ bounds an evident  smooth slice disk $B$ and the element $\alpha$ itself bounds a smooth slice disk in the complement of $B$, we have that $\eta(R(D,U), 3, \la) = 0$.  Thus, we are reduced to considering the appropriate classical invariants of $J$.   Here is the result  we need.   The notation will be explained momentarily.

\begin{lemma}\label{lem:delta} If $\eta(K,3, \la) = 0 $,  then $\Delta_7(J)$ is a 7--norm. 
\end{lemma}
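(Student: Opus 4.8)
The plan is to extract from the Casson--Gordon invariant $\eta(K,3,\la)$ its \emph{discriminant invariant}, a homomorphic image in a Witt-type quotient group in which the classes of norms have been killed, exactly as set up in~\cite{MR1162937}. Because the discriminant is a homomorphic image, the hypothesis $\eta(K,3,\la)=0$ forces the discriminant to vanish. The entire content of the lemma is then to identify this discriminant with the class of the polynomial quantity $\Delta_7(J)$, so that its triviality is precisely the assertion that $\Delta_7(J)$ is a $7$--norm. Thus the lemma amounts to the computation of a single discriminant, and I would organize the argument so that all the geometry is pushed into the already-observed vanishing of the pattern contribution, leaving only a Seifert-matrix computation for $J$.

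First I would reduce $\eta(K,3,\la)$ to an invariant of the companion $J$ alone. The character $\chi_{\la}(x)=\mathrm{lk}(x,\la)$ satisfies $\chi_{\la}(\la)=\mathrm{lk}(\la,\la)=0$ and $\chi_{\la}(\lb)=\mathrm{lk}(\la,\lb)=1$, so it is trivial on the lift of $\alpha$ (which carries $D$) and sends the lift of $\beta$ (which carries $J$) to the primitive seventh root of unity $\zeta_7$. Applying the satellite formula of Gilmer~\cite{MR656619,MR711523} and Litherland~\cite{MR780587}, one writes
\[
\eta(K,3,\la)=\eta(R(D,U),3,\la)+\bigl(\text{contribution of }J\text{ at }\zeta_7\bigr).
\]
As already observed, $R(D,U)$ bounds an evident slice disk and $\alpha$ bounds a disk in its complement, so $\eta(R(D,U),3,\la)=0$; and since $\chi_{\la}$ is trivial on the $\alpha$--lift, the companion $D$ contributes nothing. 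The only surviving term is therefore the Witt class built from $J$ at $\zeta_7$.

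Next I would compute the discriminant of this surviving term. The companion contribution is represented, over $\Q(\zeta_7)$, by the form $(1-\zeta_7)V+(1-\zeta_7^{-1})V^{T}$ built from the Seifert matrix $V=\left(\begin{smallmatrix}-1&1\\0&5\end{smallmatrix}\right)$ of $J$, and its discriminant is, up to the norms and squares that are trivial in the target group, the determinant of this form, namely the value of $\Delta_J(t)=5t^2-11t+5$ at $\zeta_7$ up to units. This discriminant is by definition the class of $\Delta_7(J)$ modulo $7$--norms. Combining with the reduction above, $\eta(K,3,\la)=0$ yields that this class is trivial, that is, $\Delta_7(J)$ is a $7$--norm, as claimed.

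The main obstacle will be the precise bookkeeping in the discriminant computation: one must pin down exactly which root of unity enters (this is where $\mathrm{lk}(\la,\lb)=1$ together with the order of the deck transformation are used) and then track the discriminant through the Witt group so that it is well defined modulo precisely the norm subgroup by which the term ``$7$--norm'' will be defined. A secondary point requiring care is the vanishing of the pattern term $\eta(R(D,U),3,\la)$: although it follows from the existence of a slice disk for $R(D,U)$ together with a disjointly embedded disk bounded by $\alpha$, one must confirm that $\la$ is carried by a curve admitting such a disk, so that the hypotheses of Theorem~\ref{thm:obstruct} apply and the term genuinely drops out.
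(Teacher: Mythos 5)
Your proposal takes essentially the same route as the paper: the Gilmer--Litherland satellite reduction, the vanishing of $\eta(R(D,U),3,\la)$ coming from the slice disk for $R(D,U)$ together with the disk bounded by $\alpha$ in its complement, and the identification of the discriminant of the surviving companion term with $\Delta_7(J)$ modulo $7$--norms are precisely the content of \cite[Corollary 6]{MR1162937}, which is all the paper itself invokes to prove this lemma. The one bookkeeping point your sketch elides (and which that reference supplies) is that the character $\chi_{\la}$ is nontrivial on all three lifts of $\beta$, taking the values $1,4,2\in\Z_7$, so the companion contribution is a sum of three Witt classes whose discriminant is the Galois--orbit product $\prod_{k\in\{1,2,4\}}\Delta_J(\zeta_7^k)$ rather than $\Delta_J(\zeta_7)$ alone; it is this product, not a single value ``by definition,'' whose triviality modulo the norm subgroup translates into the integer condition that $\Delta_7(J)=\sqrt{\prod_{k=1}^{6}\Delta_J(\zeta_7^k)}$ is a $7$--norm.
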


This is essentially~\cite[Corollary 6]{MR1162937}.  There the statement is presented as a slicing obstruction, but the obstruction is achieved by assuming that a specific Casson-Gordon invariant vanishes.  Also, a two-component link is being considered, but one of the components corresponds to the $\beta$ we are using here.

In Lemma~\ref{lem:delta}, $\Delta_7(J) $ is, by definition, $ \sqrt{\prod_{k=1}^{6}  \Delta_J(e^{2k\pi i/7})} = \sqrt{\big| H_1(Y_3(J))\big|}$, and the result assumes that    the square root is an integer.  In general, a positive integer $n$ is a $d$--norm if every prime factor of $n$ which is relatively prime to $d$ and has odd exponent in $n$, has odd order in $\Z_d^*$.   

In our case, $\Delta_J(t) = 5t^2 - 11t +5$ and a computation shows that $ \sqrt{\big| H_1(Y_3(J))\big|} = (13)(97)$.  The desired result is now immediate:  
$\gcd(7,13) =1$, $13$ has odd exponent 
in $(13)(97)$, and the order of 13 in $\Z_7^*$
is even ($13 \equiv -1 \mod 7$).  Thus, $\eta(K, 3, \la) \ne 0$ as desired.

\smallskip

{\bf (2) $\boldsymbol {\eta(K, 3, r \lb) = 0}$:} As in Case~(1), we first can reduce this to demonstrating that $\eta(K, 3,   \lb) = 0$.  Since $D$ is topologically slice, $K$ is also topologically slice, bounding a slice disk $B$, and $\beta$ bounds a slice disk in the complement of $B$.  It then follows from Casson-Gordon's original theorem that $\eta(K, 3, \lb) = 0$.  (We are using here the fact that the Casson-Gordon theorem applies in the topological locally flat setting, which is a consequence of Freedman's work~\cite{MR1201584,MR679066}.)

\smallskip

{\bf (3) $\boldsymbol{\bar{d}(Y_3(K), r \lb) \ne 0}$:} As with the previous cases, this can be reduced to the basic case that $\bar{d}(Y_3(K),   \lb) \ne 0$. The computation has three  parts, stated as a sequence of lemmas.  Our approach is  closely related to one in~\cite{MR3109864} and depends on a crucial calculation done there.  Note, however, that we must work with the $\bar{d}$--invariant, rather than with the $d$--invariant.  These results could be extracted from~\cite{MR3109864}  (see Theorems 6.2 and 6.5, along with Corollary 6.6 of~\cite{MR3109864}), but in our restricted setting, much more concise arguments are available.

The proof of the following statement includes an explanation as to why the two  homology groups  $H_1(Y_3(K)) $ and $H_1(Y_3(R(D,U))) $  can be identified.  We reduce the result to a computation related to $Y_3(R(D,U))$.

\begin{lemma}\label{lem:d-unknot}
$ {d}(Y_3(K), x) =d (Y_3(R(D,U)), x)  $ for all first homology classes $x$.
\end{lemma}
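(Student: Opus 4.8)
```latex
\textbf{Proof proposal for Lemma~\ref{lem:d-unknot}.}

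The plan is to exhibit a single cobordism (in fact a concordance-induced homology equivalence) between $Y_3(K)$ and $Y_3(R(D,U))$ that carries $\Spc$--structures to $\Spc$--structures compatibly and preserves the $d$--invariant. Recall that $K = R(D,J)$ differs from $R(D,U)$ only in the knot $J = Wh(U,5)$ tied into the $\beta$--band, while $R(D,U)$ has the unknot $U$ there. The first step is to recall the standard fact that $J = Wh(U,5)$ is a \emph{topologically slice} knot: it has trivial Alexander polynomial (as an untwisted-framing count would show—though here the framing is $5$, so more care is needed) or, more robustly, one uses that the relevant branched-cover surgery description of $R(D,J)$ and $R(D,U)$ differ by a satellite operation along $\beta$. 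The key point I would make precise is that tying $J$ into the band replaces a neighborhood of a lift of $\beta$ in $Y_3(\,\cdot\,)$ by the $3$--fold cyclic cover of the knot complement of $J$, glued back in.

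First I would set up the identification of the two homology groups asserted in the lemma statement. As already computed in Section~\ref{subsec:branch}, $H_1(Y_3(R_*)) \cong \Z_7 \oplus \Z_7$ for \emph{arbitrary} $J_\alpha, J_\beta$, generated by the chosen lifts $\la, \lb$, and this identification is natural in the satellite construction. Hence $H_1(Y_3(K))$ and $H_1(Y_3(R(D,U)))$ are canonically identified via $\la \mapsto \la$, $\lb \mapsto \lb$; this is the promised explanation, and it is essentially formal once one observes the branched cover is built by the same Seifert-surface plumbing with the $J$--information living only in the complementary pieces.

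Next, the heart of the argument: I would build a rational-homology cobordism $W$ from $Y_3(K)$ to $Y_3(R(D,U))$ realizing this isomorphism on $H_1$ and restricting trivially (i.e., as a product or as a homology-trivial piece) on the covered $J$--region. The cleanest route is to use that the $3$--fold cyclic cover of the complement of $J = Wh(U,5)$ is itself a homology cobordism to the cover of the unknot complement, \emph{provided} the $d$--invariants of the corresponding lens-space-like pieces agree. Rather than reprove this from scratch, I expect to invoke the crucial calculation of~\cite{MR3109864} (their Theorems 6.2 and 6.5 with Corollary 6.6): the satellite-by-$J$ operation along a lift of $\beta$ changes $Y_3(R(D,U))$ by a cut-and-paste that is a $\Z$--homology cobordism carrying $\Spc$--structures compatibly, so the $d$--invariant is unchanged for every class $x$. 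In our restricted setting this should reduce to checking that the local contribution of $J$ to each $d(Y_3(K),x)$ cancels—precisely because the satellite region contributes a homology sphere summand whose $d$--invariant is absorbed.

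\medskip
\emph{Main obstacle.} The hard part will be showing that the $J$--satellite operation genuinely leaves $d$ invariant \emph{class-by-class}, not merely that it is a homology cobordism on the level of the ambient manifold. A homology cobordism only guarantees $d$--invariance when $\Spc$--structures match up, and here I must check that tying in $J$ along the lift of $\beta$ induces the identity on $\Spc$--structures under the identification $H_1(Y_3(K)) \cong H_1(Y_3(R(D,U)))$. The delicate point flagged in the outline—that~\cite{MR3109864} ``did not address an issue of Alexander polynomial one knots''—suggests the subtlety is exactly in controlling the $d$--invariant contribution of $D = Wh(T(2,3),0)$ in the $\alpha$--band, whose Alexander polynomial is $1$; I would need to verify that this trivial-polynomial factor contributes no correction term to $d$ beyond what $R(D,U)$ already carries. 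So the argument has two clean halves (the $J$ side, handled by the satellite homology cobordism, and the $D$ side, where triviality of $\Delta_D$ must be leveraged to control $\Spc$--compatibility), and the real work is verifying the $\Spc$--structure matching so that~\cite{MR3109864}'s cobordism computation applies uniformly to all $x$.
```
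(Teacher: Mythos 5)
There is a genuine gap in your proposal. Its central step---that tying $J$ into the $\beta$--band induces a $\Z$--homology cobordism between $Y_3(K)$ and $Y_3(R(D,U))$ which carries Spin$^c$--structures compatibly and hence preserves $d$---is asserted rather than proved, and it is false for a general companion knot. The satellite operation replaces three solid-torus neighborhoods of the lifts of $\beta$ by copies of the exterior of $J$, and in general this \emph{does} change $d$--invariants: if it did not, $d$--invariants of branched covers could never obstruct sliceness of infected knots (for instance, $R(J_\alpha,U)$ would always have the same $d$--invariants as the slice knot $R(U,U)$), contradicting the way these invariants are used throughout the paper and the literature. So any correct proof must exploit a special property of $J=Wh(U,5)$, and your argument never identifies one. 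The properties you do gesture at are incorrect or irrelevant: $J$ is \emph{not} topologically slice and does not have trivial Alexander polynomial (the paper records $\Delta_J(t)=5t^2-11t+5$, which fails Fox--Milnor); and your ``$D$ side'' worry is misplaced, since $D$ sits identically in both $Y_3(K)$ and $Y_3(R(D,U))$ and plays no role in this lemma---the ``Alexander polynomial one issue'' flagged in the outline concerns working modulo $\calt_\Delta$ (the reason $\bar d$ is used instead of $d$), not Spin$^c$ matching here.

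The paper's proof uses a different and elementary mechanism: $J$ can be unknotted by changing only negative crossings to positive, \emph{and also} by changing only positive crossings to negative. Each family of crossing changes is realized by $(\mp 1)$--surgery on unknots in the complement of the Seifert surface; their lifts to $Y_3(K)$ are null-homologous and unlinked, so surgery on them yields a definite cobordism with diagonal intersection form from $Y_3(K)$ to $Y_3(R(D,U))$ inducing an isomorphism on $H_1$. The negative-definite direction gives $d(Y_3(K),x)\ge d(Y_3(R(D,U)),x)$ by \cite{MR1957829}, and the opposite unknotting sequence gives the reverse inequality, hence equality class by class. If you prefer to keep your satellite-theoretic framing, it can be repaired, but only by supplying exactly the missing input: $J$ has genus one and $\tau(J)=0$, so by Sato's theorem \cite{Sato:2019a} it is $\nu^+$--equivalent to the unknot, and then Theorems~1.3 and 2.7 of \cite{KKK:2019aa} show the infection leaves all $d$--invariants of the branched cover unchanged---this is precisely the route the paper takes in Section~\ref{sec:infinite family} for $J_\alpha$, and it is the content your proposal leaves unproved.
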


\begin{proof}  The knot $J$ can be converted into the unknot by changing negative crossings to positive.  Thus, there is a collection of unknots, $\{\gamma_i\}_{i=1,\ldots, r}$ (in fact, an unlink) in the complement of the natural genus one Seifert surface for $K$ such that $(-1)$--surgery on each has the effect of unknotting the band.  Each $\gamma_i$ bounds a surface in the complement of the Seifert surface. The curves $\gamma_i$ lift to $Y_3(K)$ to give a family of disjoint simple closed curves $\{\widetilde{\gamma}_{i,j}\}_{1\le i\le r, 1\le j\le 3}$.  By lifting the surfaces bounded by the $\gamma_i$ in the complement of the Seifert surface for $K$, we see that the curves $\widetilde{\gamma}_{i,j}$ are null-homologous and unlinked. 

It is now apparent  that $Y_3(R(D,U))$ can be built from $Y_3(K)$ by performing $(-1)$--surgery on all the curves in $\{\widetilde{\gamma}_{i,j}\}_{1\le i\le r, 1\le j\le 3}$.  There is a corresponding cobordism from $Y_3(K)$ to $Y_3(R(D,U))$  which is negative definite,  has diagonal intersection form, and the inclusions $Y_3(K)$  and  $Y_3(R(D,U))$ into the cobordism induce isomorphisms of the first homology.  Now, basic results of~\cite{MR1957829}   imply that $d(Y_3(K)) \ge   d(Y_3(R(D,U)))$.

We  also have that $J$ can be unknotted by changing positive crossings to negative.  The  argument just given yields the reverse inequality.
\end{proof}

\begin{lemma}\label{lem:trivial}  
$ {d}(Y_3(K), r \la) =  0$ for all $r \in \Z_7$.  In particular,  ${d}(Y_3(K), 0) = 0$.
\end{lemma}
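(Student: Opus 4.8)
The plan is to reduce everything to the already-slice knot $R(D,U)$ via Lemma~\ref{lem:d-unknot}, and then to invoke the vanishing result for $d$--invariants of classes that bound in a rational homology four-ball. First I would apply Lemma~\ref{lem:d-unknot}, which gives $d(Y_3(K), x) = d(Y_3(R(D,U)), x)$ for every first homology class $x$; under the identification of $H_1(Y_3(K))$ with $H_1(Y_3(R(D,U)))$ established there, the class $\la$ is carried to $\la$. Hence it suffices to prove $d(Y_3(R(D,U)), r\la) = 0$ for all $r \in \Z_7$.

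Next I would produce the rational homology ball. Since $R(D,U)$ is smoothly slice, bounding the disk $B$ noted in Section~\ref{subsec:cg-hf}, the $3$--fold cyclic branched cover $W$ of $B^4$ along $B$ is a smooth rational homology four-ball with $\partial W = Y_3(R(D,U))$. Moreover, because $H_1(Y_3(R(D,U))) \cong \Z_7 \oplus \Z_7$ has no $2$--torsion, the universal coefficient theorem yields $H_1(Y_3(R(D,U)); \Z_2) = 0$. Thus the two ambient hypotheses of the vanishing result recorded in the Heegaard Floer subsection are in place, and it remains only to show that each $r\la$ is the image of a class in $H_2(W, \partial W)$ under the boundary map $\partial\co H_2(W, \partial W) \to H_1(\partial W)$.

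This last point is the geometric heart of the argument. The curve $\alpha$ bounds a smooth disk $\Delta$ in the complement of $B$ in $B^4$; this is precisely the fact exploited in Case~(1) of the Casson-Gordon computation, where it forced $\eta(R(D,U), 3, \la) = 0$. Since $\Delta$ is simply connected and disjoint from the branch locus $B$, its preimage in $W$ is a disjoint union of three disks, whose boundaries are the three lifts of $\alpha$ to $Y_3(R(D,U))$; one of these lifts is $\la$. Therefore $\la = \partial[\widetilde\Delta]$ for the corresponding lifted disk $\widetilde\Delta$, so $\la$ lies in the image of $H_2(W, \partial W) \to H_1(\partial W)$, and because that image is a subgroup, every multiple $r\la$ lies in it as well. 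Applying the vanishing result gives $d(Y_3(R(D,U)), r\la) = 0$ for all $r$, and Lemma~\ref{lem:d-unknot} then delivers $d(Y_3(K), r\la) = 0$; taking $r = 0$ records the ``in particular'' statement $d(Y_3(K), 0) = 0$.

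The step I expect to require the most care is the geometric one: verifying that $\alpha$ genuinely bounds a smooth disk disjoint from the slice disk $B$, and that its three lifts (equivalently, the $\la$ specified in Section~\ref{subsec:branch}) arise as the boundaries of the lifted disks, so that it is $\la \in E_2$ rather than some other class that is realized. Once the disk $\Delta$ and its lifts are pinned down, the remaining ingredients---that $W$ is a $\Q$--homology ball, that $H_1(\partial W; \Z_2) = 0$, and the $d$--invariant vanishing itself---are direct appeals to facts already assembled in the excerpt.
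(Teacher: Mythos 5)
Your proposal is correct and follows essentially the same route as the paper: reduce to $R(D,U)$ via Lemma~\ref{lem:d-unknot}, take the rational homology ball $W$ arising as the $3$--fold branched cover of $B^4$ along the slice disk, observe that $\la$ and its multiples are null-homologous in $W$ (equivalently, lie in the image of $H_2(W,\partial W)$), and invoke the $d$--invariant vanishing result of~\cite{MR1957829}. The only difference is one of detail: the paper simply asserts that $\la$ and its multiples are null-homologous in $W$, whereas you supply the geometric justification by lifting the disk that $\alpha$ bounds in the complement of the slice disk, which is a correct and welcome elaboration of the same argument.
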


\begin{proof} We consider $R(D, U)$ instead.  This knot is smoothly slice, so $Y_3(R(D,U))$ bounds a rational homology ball $W^4$.  The homology class $\la$ and its multiples are null-homologous in $W^4$, so the corresponding Spin$^c$--structure extends to $W^4$.  The vanishing of the $d$--invariant is then implied by results of~\cite{MR1957829}.
\end{proof}

We  now have our final lemma that completes the proof of Lemma~\ref{lem:nonzero}.

\begin{lemma}
$\bar{d}(Y_3(K), \lb) \ne 0$.  
\end{lemma}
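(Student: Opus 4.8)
The plan is to reduce the statement to a single $d$--invariant computation and then to invoke the calculation of~\cite{MR3109864}. First I would use the two preceding lemmas to rewrite the quantity we must control. Since $\bar{d}(Y,\chi)=d(Y,\chi)-d(Y,0)$ and $d(Y_3(K),0)=0$ by Lemma~\ref{lem:trivial}, we have $\bar{d}(Y_3(K),\lb)=d(Y_3(K),\lb)$, and by Lemma~\ref{lem:d-unknot} this equals $d(Y_3(R(D,U)),\lb)$. Thus it suffices to prove
\[
d(Y_3(R(D,U)),\lb)\ne 0 .
\]
It is worth emphasizing why this is not automatic. Although $R(D,U)$ is smoothly slice, so that $Y_3(R(D,U))$ bounds a rational homology ball $W$ and the classes $r\la$, which extend over $W$, have vanishing $d$--invariant exactly as in Lemma~\ref{lem:trivial}, the class $\lb$ does \emph{not} extend over $W$. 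Hence Theorem~\ref{thm:obstruct} provides no a priori vanishing in the Spin$^c$--structure $\lb$, and the value genuinely must be computed.

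To compute it I would follow the method of~\cite{MR3109864}. The knot $R(D,U)$ is obtained from the slice knot $R(U,U)$ by infecting the $\alpha$--band with $D$, and correspondingly $Y_3(R(D,U))$ is obtained from $Y_3(R(U,U))$ by surgery on the lifts of an unknotting curve for $D$ sitting in the $\alpha$--band — precisely the device used in the proof of Lemma~\ref{lem:d-unknot}, now applied to the $\alpha$--band rather than the $\beta$--band. Since $\text{lk}(\la,\lb)=1$, the three lifts of $\alpha$ link $\lb$ nontrivially, so the infection genuinely alters the $d$--invariant in the Spin$^c$--structure $\lb$, and the alteration is governed by the Heegaard Floer correction terms of the infecting knot $D$. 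Concretely, the definite cobordisms produced by these surgeries, together with the $d$--invariant inequalities of~\cite{MR1957829}, reduce the computation to a correction term $V_0(D)$ of $D$; this is carried out in~\cite[Theorems 6.2, 6.5, Corollary 6.6]{MR3109864}.

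The statement then follows from the fact that $D=Wh(T(2,3),0)$ is not smoothly slice: one has $\tau(D)=1$, whence $V_0(D)>0$, and so $d(Y_3(R(D,U)),\lb)\ne 0$. The main obstacle is the Spin$^c$ bookkeeping inside the eigenspace decomposition of $H_1(Y_3(R(D,U)))$: one must verify that the three lifts of $\alpha$ contribute coherently to the $\lb$--structure rather than cancelling, that the baseline contribution of $R(U,U)$ is accounted for, and that the sign of the resulting correction term is determined — all of which is where the crucial calculation of~\cite{MR3109864} is invoked. The simplification available in our restricted setting is that we need only the nonvanishing of $d(Y_3(R(D,U)),\lb)$, not its exact value, so a one--sided definite--cobordism estimate combined with the strict positivity $V_0(D)>0$ suffices, and the full determination performed in~\cite{MR3109864} can be bypassed.
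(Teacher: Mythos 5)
Your opening reduction is exactly the paper's: Lemma~\ref{lem:trivial} kills the $d(Y_3(K),0)$ term, Lemma~\ref{lem:d-unknot} replaces $K$ by $R(D,U)$, and everything rests on showing $d(Y_3(R(D,U)),\lb)\ne 0$. At that point the paper simply quotes the explicit bound $d(Y_3(R(D,U)),\lb)\le -3/2$ from \cite[Appendix~A]{MR3109864}, together with the eigenspace and conjugation bookkeeping needed to identify $\lb$ with the class $4\widehat{x_2}$ appearing there; your alternative of deferring to Theorems~6.2, 6.5 and Corollary~6.6 of \cite{MR3109864} is a route the paper itself acknowledges as viable. Up to that point your proposal is fine.

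The genuine gap is your closing claim that the full determination in \cite{MR3109864} ``can be bypassed'' by a one-sided definite-cobordism estimate combined with $V_0(D)>0$. The only crossing-change cobordism available here compares $Y_3(R(D,U))$ with $Y_3(R(U,U))$: since $\tau(D)=1$, the knot $D$ cannot be unknotted by changing negative crossings to positive, so (unlike in Lemma~\ref{lem:d-unknot}) only one direction exists, and the resulting inequality of \cite{MR1957829} reads $d(Y_3(R(D,U)),\lb)\le d(Y_3(R(U,U)),\lb)$. But $R(U,U)$ is smoothly slice with a slice disk in whose complement $\beta$ bounds, so by the argument of Lemma~\ref{lem:trivial} with the roles of the two bands exchanged, $d(Y_3(R(U,U)),r\lb)=0$. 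Hence the definite-cobordism estimate yields only $d(Y_3(R(D,U)),\lb)\le 0$; it is blind to $V_0(D)$ and can never produce the needed \emph{strict} inequality. The correction term $V_0(D)$ enters only through the surgery formula applied to a surgery presentation of the branched cover, and there the index on $V$ is dictated by the Spin$^c$ structure: since $g_4(D)=\nu^+(D)=1$ forces $V_1(D)=0$, one must verify that the relevant terms carry index $0$ rather than $1$ --- otherwise the would-be obstruction vanishes identically --- and one must control the fractional lens-space baseline (which is why the answer is $-3/2$ and not $-2V_0(D)=-2$). That verification is precisely the ``crucial calculation'' of \cite[Appendix~A]{MR3109864} (equivalently, Theorems~6.2 and 6.5 there); your proof stands or falls on it, and it cannot be replaced by a soft definiteness argument.
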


\begin{proof} By Lemmas~\ref{lem:d-unknot} and \ref{lem:trivial} we can switch to considering the $d$--invariant rather than the $\bar{d}$--invariant, as follows.
\begin{equation*}
\begin{split}
\bar{d}(Y_3(K), \lb) &= {d}(Y_3(K), \lb) -  {d}(Y_3(K),0) \\
& =  {d}(Y_3(K), \lb) \\
& =  {d}(Y_3(R(D,U)), \lb).
\end{split}
\end{equation*}
The argument is then completed by quoting~\cite[Appendix A]{MR3109864}, where it is shown that ${d}(Y_3(R(D,U)), \lb) \le -3/2$.  (The statement in~\cite{MR3109864} refers to a homology class denoted $4 \widehat{x_2}$.  Notice that since $\lb$ is a  4--eigenvector,  ${d}(Y_3(R(D,U)), \lb) =   {d}(Y_3(R(D,U)), 4\lb)  = {d}(Y_3(R(D,U)), 2\lb)$.  Also, since the $d$--invariant is invariant under conjugation of Spin$^c$--structure, ${d}(Y_3(R(D,U)), x) = {d}(Y_3(R(D,U)), -x)$.  Thus, all $d$--invariants associated to nonzero elements in this eigenspace are equal.)
\end{proof}

%%%%%%%%%%%%SECTION%%%%%%%%%%%
\section{An infinite family of knots}\label{sec:infinite family}
Our goal in this section is to generalize the previous example in Section~\ref{sec:single knot} to build an infinitely generated free subgroup of $\calt/  (\text{Fix}(\rho)+\calt_\Delta)$, which will prove Theorem~\ref{thm:main2}.  

We now let the two bands in the Seifert surface in Figure~\ref{fig:knot} have $2n+1$ half-twists, and use the general notation $R_n$. For the choice of knots $J_\alpha$ and $J_\beta$, we will let $J_\alpha={\rm Wh}(T(2,-3),-1)$ be the positively clasped $(-1)$--twisted Whitehead double of the left-handed trefoil, and let $J_\beta={\rm Wh}(T(2,3),0)$ be the positively clasped untwisted Whitehead double of the right-handed trefoil. Notice that $J_\beta$ is topologically slice, hence so is $R_n(J_\alpha, J_\beta)$. Henceforth, we let $K_n=R_n(J_\alpha, J_\beta)$ for brevity.

The proof of Theorem~\ref{thm:main2} consists of selecting an appropriate set of positive integers $\caln$ for which we can prove that the set $\{R_n (J_\alpha, J_\beta)\}_{n \in \caln}$ represents a linearly independent set in  $\calt /( \rm{Fix}(\rho)+\calt_\Delta)$.  Computing the appropriate Heegaard Floer invariants of a branched cyclic cover  of    $ R_n (J_\alpha, J_\beta)$ relies on work of Cochran-Harvey-Horn \cite{MR3109864} and Cha \cite{arxiv:1910.14629}.

Recall that we let $Y_3(K)$ denote the 3--fold cover of $S^3$ branched over an arbitrary knot $K$.  The following is an elementary knot theoretic computation. 

\begin{lemma} $H_1(Y_3(K_n)) \cong \Z_{3n^2 +3n +1} \oplus \Z_{3n^2 +3n +1}$.
\end{lemma}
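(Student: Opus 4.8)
The plan is to compute $H_1(Y_3(K_n))$ directly from a Goeritz or Seifert-matrix presentation, using the fact that the knots $J_\alpha$ and $J_\beta$ placed in the bands do not affect the relevant homology. The key observation is that for the branched cover of a knot of this ``band'' type, $H_1(Y_3(K_n))$ depends only on the framing data of the two bands and the linking between them, not on the interior knotting $J_\alpha, J_\beta$. This is exactly the phenomenon already recorded in Subsection~\ref{subsec:branch} for $n=1$, where it was noted that $H_1(Y_3(R_*)) \cong \Z_7 \oplus \Z_7$ for \emph{arbitrary} $J_\alpha$ and $J_\beta$. So first I would reduce to computing $H_1(Y_3(R_n(U,U)))$, the branched cover of the simplest representative with unknots in both bands.

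Next I would set up a presentation matrix for $H_1(Y_3(\cdot))$. The standard tool here is that $R_n$ has an obvious genus-one Seifert surface $F$ built from two bands with $2n+1$ half-twists, and if $V$ is the associated $2\times 2$ Seifert matrix, then $H_1(Y_3)$ is presented by a block matrix assembled from $V$ and $V^T$ according to the three-fold cyclic cover construction. Concretely, for the $q$-fold branched cover one forms the matrix $V \otimes (\text{something}) - V^T \otimes (\text{something})$, or equivalently one uses the presentation with relation matrix whose determinant is $\prod_{k} \det(V - \zeta^k V^T)$ over the nontrivial $q$-th roots of unity $\zeta$. For $q=3$ the order of $H_1(Y_3(K_n))$ is $\big|\det(V-\zeta V^T)\big|^2$ where $\zeta = e^{2\pi i/3}$, and I expect this to evaluate to $(3n^2+3n+1)^2$. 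The appearance of $3n^2+3n+1 = n^2 + (n+1)^2 + n(n+1)$ (the ``centered hexagonal'' or Löschian numbers) is the expected signature of a $q=3$ computation with the twisting parameter $2n+1$, and the sanity check $n=1$ gives $3+3+1 = 7$, matching the earlier section.

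The remaining work is to verify not just the order but the actual group structure $\Z_{3n^2+3n+1} \oplus \Z_{3n^2+3n+1}$, rather than some other abelian group of the same order. Here I would use the eigenspace decomposition already established in Subsection~\ref{subsec:branch}: the order-three deck transformation acts on $H_1(Y_3(K_n)) \otimes \Z_m$ with minimal polynomial dividing $(t-2)(t-4)$ once $m \mid 3n^2+3n+1$ and $3$ is invertible, and the two lifts $\la, \lb$ of $\alpha, \beta$ generate the two eigenspaces. Since the linking form pairs these two eigenspaces nondegenerately with $\mathrm{lk}(\la,\lb)$ a unit, each eigenspace must be cyclic of order exactly $3n^2+3n+1$, forcing the stated direct sum decomposition.

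The main obstacle will be the matrix algebra: correctly writing down the Seifert matrix for $R_n$ with the $2n+1$ half-twists and the clasp, and then computing $\det(V - \zeta V^T)$ cleanly enough to recognize the closed form $3n^2+3n+1$. This is a routine but error-prone linear-algebra computation, so I would anchor it by independently confirming the order via $\prod_{k=1}^{2}\Delta_{R_n}(\zeta^k) = \det(V-\zeta V^T)\det(V-\bar\zeta V^T)$, computing the Alexander polynomial $\Delta_{R_n}(t)$ directly from $\det(V - tV^T)$ (up to units) and then specializing at the primitive cube roots of unity. The eigenspace/linking-form argument then upgrades the order computation to the group-structure statement with essentially no extra effort, since it reuses the structural facts already proved for the $n=1$ case.
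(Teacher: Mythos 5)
Your reduction and your order computation are sound: infection along $\alpha$ and $\beta$ leaves the Seifert form unchanged, so $H_1(Y_3(K_n))\cong H_1(Y_3(R_n(U,U)))$ together with its linking form and deck action, and Fox's formula $|H_1(Y_3)|=|\Delta(\zeta)\Delta(\zeta^2)|$ applied to the Seifert matrix of the evident genus-one surface, $V=\left(\begin{smallmatrix} 0 & n+1 \\ n & 0\end{smallmatrix}\right)$ up to the usual sign and transposition conventions, gives $\left((n+1)^3-n^3\right)^2=(3n^2+3n+1)^2$. The genuine gap is in the upgrade from the order to the group structure. What your eigenspace/linking-form argument actually proves is that $H_1=E\oplus E'$, that each eigenspace is self-annihilating, and that the form induces a perfect pairing $E\times E'\to\Q/\Z$; hence $|E|=|E'|=m$ and $E\cong E'$. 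It does \emph{not} prove that $E$ is cyclic. Cyclicity of the eigenspaces is exactly the assertion that $\la$ and $\lb$ generate, which you import from Subsection~\ref{subsec:branch}; but that subsection treats only $n=1$ and states the generation fact without proof --- it is an instance of the very lemma you are proving --- so the appeal is circular. Moreover the missing step is not cosmetic: the lemma is stated for all $n$ (the restriction to $\caln$ comes only afterwards), and $m=3n^2+3n+1$ need not be squarefree. For $n=7$ one has $m=169=13^2$, and the group $(\Z_{13})^4$ --- with the deck transformation $\tau$ acting by a primitive cube root of unity $c$ modulo $13$ on one $\Z_{13}\oplus\Z_{13}$ summand and by $c^2$ on the other, and the evident form placing the two eigenspaces in duality --- satisfies every property you actually establish (order $m^2$, $\tau^2+\tau+1=0$, self-annihilating eigenspaces in perfect pairing), yet it is not $\Z_{169}\oplus\Z_{169}$. (When $m$ is squarefree your duality argument does close, since each $p$-primary piece is then forced to be $\Z_p\oplus\Z_p$; but that does not cover the statement as made.)

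The repair is to finish the presentation you yourself set up, rather than passing to determinants. Seifert's classical theorem presents $H_1(Y_q(K))$ by the integer matrix $G^q-(G-I)^q$, where $G=V(V-V^T)^{-1}$ (again up to conventions that do not affect the outcome). Here $V-V^T=\left(\begin{smallmatrix} 0 & 1 \\ -1 & 0\end{smallmatrix}\right)$, so $G=\mathrm{diag}(n+1,-n)$ and $G^3-(G-I)^3=\mathrm{diag}\left((n+1)^3-n^3,\,(n+1)^3-n^3\right)$: the presentation matrix is literally $\mathrm{diag}(m,m)$, giving the order and the structure in one stroke, and as a byproduct an honest proof of the generation and eigenspace facts that Subsection~\ref{subsec:branch} only asserts for $n=1$. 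Equivalently, the Alexander module of $R_n$ is $\Z[t^{\pm1}]/(tn-(n+1))\oplus\Z[t^{\pm1}]/(t(n+1)-n)$, and quotienting by $t^3-1$ yields $\Z_m\oplus\Z_m$, because $n$ and $n+1$ are units modulo $m$ and $n^3(t^3-1)\equiv(n+1)^3-n^3=m$ modulo $tn-(n+1)$. For comparison, the paper offers no argument at all here --- it records the lemma as ``an elementary knot theoretic computation'' --- and this completed presentation computation is precisely the computation intended.
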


To simplify our computations, we would like to constrain the possible prime factorizations of $3n^2 +3n+1$.  This is provided by a number theoretic result, the proof of which is presented in the appendix.

\begin{theorem}\label{thm:primes} There is an infinite set of positive integers $\caln = \{n_i\}_{i\ge 1}$ such that for all $i$, $3n_i^2 + 3n_i +1 = p_{2i-1}p_{2i}$  where: {(1)} each $p_j$ is either an odd prime or equals 1; {(2)} if $j \ne l$ and $p_j \ne 1 $, then $p_j \ne p_l$; and {(3)} $1\in \caln$.
\end{theorem}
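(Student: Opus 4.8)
The plan is to produce the set $\caln$ by an inductive construction whose engine is a sieve-theoretic result on almost-primes represented by an irreducible quadratic. Write $f(n) = 3n^2 + 3n + 1 = (n+1)^3 - n^3$. First I would record the elementary arithmetic of $f$ that makes it amenable to the sieve: $f$ is irreducible over $\Q$ (its discriminant is $-3$), its coefficients are coprime, $f(n)$ is odd for every $n$ (since $3n^2+3n = 3n(n+1)$ is even), and $f$ has no fixed prime divisor (no prime $\ell$ divides $f(n)$ for all $n$, since $f(0)=1$ and a quadratic has at most two roots modulo $\ell$). In particular each value $f(n)$ with $n \ge 1$ is an odd integer at least $7$, so condition (1), that all prime factors be odd, is automatic.

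The main input is Iwaniec's theorem that an irreducible, primitive quadratic with positive leading coefficient and no fixed prime divisor takes values with at most two prime factors ($P_2$ numbers) infinitely often. Applied to $f$ this already shows $f(n)$ is a $P_2$ for infinitely many $n$, hence either a prime or a product of two primes. Two further points must be arranged: the two primes should be \emph{distinct} (ruling out $f(n)=p^2$), and, across the whole family, the primes occurring should be pairwise distinct so that condition (2) holds. For the first, I would observe that $f(n)$ is a perfect square exactly when $3(2n+1)^2 + 1 = (2m)^2$, i.e. when $u^2 - 3v^2 = 1$ with $v = 2n+1$; the solutions of this Pell equation grow exponentially and so form an exceedingly sparse set of $n$, and removing them still leaves infinitely many $P_2$ values of $f$ that are square-free (hence a product of two \emph{distinct} primes, or a prime times $1$).

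The cross-family distinctness is handled by building $\caln = \{n_i\}$ one element at a time. I would start with $n_1 = 1$, where $f(1) = 7 = 7\cdot 1$, which secures condition (3). Suppose $n_1 < \cdots < n_k$ have been chosen, and let $P$ be the finite set of primes dividing some $f(n_i)$. I then want $n_{k+1}$ for which $f(n_{k+1})$ is a square-free $P_2$ coprime to every prime of $P$; its at most two prime factors are then new and mutually distinct. The device is to pick, for each $p \in P$, a residue $b \bmod p$ with $p \nmid f(b)$ (possible as $p$ is not a fixed prime divisor), assemble these by the Chinese Remainder Theorem into $b \bmod Q$ with $Q = \prod_{p \in P} p$, and apply Iwaniec's theorem not to $f$ but to $g(m) = f(Qm + b)$. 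The polynomial $g$ still satisfies the hypotheses: it is irreducible (an affine substitution cannot create a factorization), primitive, has positive leading coefficient $3Q^2$, and has no fixed prime divisor — for $\ell \nmid Q$ this is inherited from $f$, while for $\ell \mid Q$ one has $g(m) \equiv f(b) \not\equiv 0 \pmod{\ell}$ by the choice of $b$. Thus $g(m)$ is a $P_2$ for infinitely many $m$, every such $f(Qm+b)$ is coprime to all of $P$, and discarding the sparse set of $m$ giving a perfect square I may set $n_{k+1} = Qm+b$. Iterating produces the desired infinite $\caln$.

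The step I expect to be the genuine obstacle is precisely this congruence-constrained production of almost-primes. The naive route — taking the infinite $P_2$ set of $f$ and merely deleting the $n$ with $p \mid f(n)$ for $p \in P$ — fails, because the $P_2$ values have density only $\asymp X/\log X$ whereas the deleted residue classes have positive density, so the deletion could in principle remove all of them. Transplanting the coprimality condition into the polynomial $g(m) = f(Qm+b)$ and re-verifying Iwaniec's hypotheses for $g$ is what circumvents this; once that verification is in hand, the induction runs cleanly and delivers the infinite set $\caln$ with all three properties.
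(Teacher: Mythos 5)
Your proof is correct, and its skeleton---induction starting from $n_1=1$, forming the product $Q$ of all primes used so far, composing $f$ with a linear substitution so that the new value is automatically coprime to $Q$, and then invoking an almost-prime theorem for irreducible quadratics---is exactly the skeleton of the paper's proof. The difference is the sieve input. The paper quotes Lemke Oliver's theorem, which produces infinitely many $n$ for which $G(n)$ is \emph{square-free} with at most two distinct prime factors; squarefreeness comes for free, and the only hypothesis to check is $\Gamma_G \ne 0$, verified because $g(m) \equiv m^2+m+1 \pmod 2$ is irreducible. You instead quote Iwaniec's older $P_2$ theorem, which does not exclude values of the form $p^2$, and you patch this with the elementary observation that $f(n)=(n+1)^3-n^3$ is a perfect square only when $(2m)^2-3(2n+1)^2=1$, a Pell equation whose solutions are exponentially sparse. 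Note that this patch genuinely requires the quantitative form of Iwaniec's theorem (at least $cX/\log X$ values of $n \le X$ yield $P_2$ numbers), not mere infinitude, since you are deleting an infinite set; you implicitly acknowledge this density comparison in your closing paragraph, but it should be invoked at the point where the squares are discarded. Two smaller contrasts: the paper's substitution $n=Pm-1$ exploits $f(-1)=1$, so coprimality to $P$ is automatic and no Chinese Remainder argument is needed; and the hypotheses you verify for $g$ (primitivity, no fixed prime divisor) play the role of the paper's check that $\Gamma_G \ne 0$. Both routes are sound; yours trades a stronger off-the-shelf theorem for a weaker classical one supplemented by an elementary Pell argument.
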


Our goal is to prove the following theorem, from which Theorem~\ref{thm:main2} immediately follows.

\begin{theorem} The set of knots  $\{K_n\}_{n \in \caln}$ is linearly independent in $\calt /(\text{Fix}(\rho)+\calt_\Delta)$.
\end{theorem}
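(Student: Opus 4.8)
The plan is to prove linear independence by reducing any nontrivial linear combination to a single summand whose slicing obstruction is nonzero, exactly as in the single-example case but now with the added complication of interacting homology groups across different $n$. Suppose we have a nontrivial relation, so that a connected sum $W = \cs_{n \in \caln} a_n (K_n \cs -\rho(K_n))$ (with all but finitely many $a_n = 0$, and some $a_{n_0} \ne 0$) is smoothly slice modulo $\calt_\Delta$. By Theorem~\ref{them:modulo trivial} it suffices to show that Theorem~\ref{thm:obstruct} obstructs $W$ from being smoothly slice. The key structural input is Theorem~\ref{thm:primes}: the orders $3n^2+3n+1 = p_{2i-1}p_{2i}$ are pairwise coprime across distinct $n_i$, so $H_1(Y_3(W))$ splits as an orthogonal direct sum of the contributions from each $n$ with respect to the linking form, and the relevant prime $7$ appears in the $n=1$ factor (since $3(1)^2+3(1)+1=7$).

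The heart of the argument is to show that any equivariant metabolizer $\calm$ for the linking form on $H_1(Y_3(W))$ must, after projecting to each coprime summand, restrict to an equivariant metabolizer there. First I would establish that because the orders are pairwise coprime, the linking form is block-diagonal and any metabolizer decomposes as a direct sum $\calm = \bigoplus_n \calm_n$, with each $\calm_n$ an equivariant metabolizer of the $n$-th summand $H_1(Y_3(K_n \cs -\rho(K_n)))^{a_n}$. This uses that a self-annihilating subgroup of a coprime orthogonal sum splits, together with the $\Z_7$-eigenspace decomposition already exploited in Lemma~\ref{lem:cases}. For the summand at $n_0$ with $a_{n_0} \ne 0$, I would replicate the four-case analysis of Lemma~\ref{lem:cases}, generalized to $a_{n_0}$ copies: a case-by-case chase shows that whatever metabolizer we land in, additivity of $\eta$ and $\bar{d}$ forces the vanishing of some obstruction that contradicts the analogue of Lemma~\ref{lem:nonzero} for $K_{n_0}$.

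This requires proving the $K_n$-analogue of Lemma~\ref{lem:nonzero}, namely that $\eta(K_n, 3, r\la) \ne 0$, $\eta(K_n, 3, r\lb) = 0$, and $\bar{d}(Y_3(K_n), r\lb) \ne 0$ for all $r \not\equiv 0$ in the relevant coefficient ring. The Casson-Gordon computations should proceed as in Section~\ref{subsec:cg-hf}, now invoking the discriminant/norm condition relative to the prime factors of $3n^2+3n+1$ rather than $7$. The $\bar{d}$-computation is the genuinely harder step: the clean reduction to $Y_3(R(D,U))$ in Lemmas~\ref{lem:d-unknot}--\ref{lem:trivial} relied on the specific slice disk for $R(D,U)$, whereas the general $K_n$ uses $J_\alpha = {\rm Wh}(T(2,-3),-1)$, so I expect to invoke the machinery of Cha~\cite{arxiv:1910.14629} and Cochran-Harvey-Horn~\cite{MR3109864} to obtain the needed lower bounds on the $d$-invariants across the infinite family. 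The main obstacle will be controlling these Heegaard Floer $d$-invariants uniformly in $n$ while ensuring they remain nonzero in every eigenspace dictated by the metabolizer analysis; handling the case where the metabolizer mixes eigenvectors (the analogue of case (4) in Lemma~\ref{lem:cases}) simultaneously across the nonzero summands will demand the most care.
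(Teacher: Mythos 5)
Your first step (reduce to a connected sum $W \cs L$, apply Theorems~\ref{them:modulo trivial} and \ref{thm:obstruct}, and use the coprimality of the orders $3n^2+3n+1$ from Theorem~\ref{thm:primes} to split any equivariant metabolizer into $p$--primary blocks, one per $n$) matches the paper's first step. But the core of your argument has two genuine gaps. First, your proposed analogue of Lemma~\ref{lem:nonzero} is false for the knots $K_n$ actually being considered. In Section~\ref{sec:infinite family} the companions are swapped relative to Section~\ref{sec:single knot}: now $J_\beta = {\rm Wh}(T(2,3),0)$ is the topologically slice knot, and $J_\alpha = {\rm Wh}(T(2,-3),-1)$ is the companion carrying signature information. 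Since characters of $\la$--type evaluate nontrivially only on lifts of $\beta$, the Gilmer--Litherland formula shows $\eta(K_n,3,r\la)$ sees only $J_\beta$, and $\Delta_{J_\beta}(t)=1$ forces $\eta(K_n,3,r\la)=0$ --- there is no discriminant to feed into a norm condition ``relative to the prime factors of $3n^2+3n+1$.'' For these knots the nontrivial Casson--Gordon information lives at characters with $\lb$--type components (coming from the Levine--Tristram signatures of $J_\alpha$), and the nontrivial $d$--invariant lives at $\la$--type classes (detecting $J_\beta$); you have the assignment exactly backwards, and the paper's change of companions is precisely what makes its argument work.

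Second, and more fundamentally, a ``case-by-case chase'' through metabolizers armed only with pointwise nonvanishing statements cannot prove linear independence. When $|a_n|\ge 2$, a metabolizer element $h=(h_1,\dots)$ spreads across copies and the obstruction is the sum $\sum_i \eta(K_n,3,h_i)$; knowing each term is nonzero does not preclude cancellation. Worse, the discriminant invariant you propose is multiplicative under connected sum, so for even multiples the relevant discriminant is a square, hence automatically a norm: that obstruction provably vanishes on $2(K_n \cs -\rho(K_n))$. The paper's key idea, which your proposal is missing, is to use \emph{sign-definite} invariants that are robust under arbitrary sums: the Casson--Gordon \emph{signature} invariant, whose contributions from $J_\alpha$ are all $\le 0$ with at least one strictly negative whenever $h$ has a nonzero $\lb$--type component, forces the metabolizer to be exactly $\bigoplus_i(\langle x_i\rangle \oplus \langle x_i'\rangle)$ (Lemma~\ref{lem:eigenspace}) --- no enumeration of metabolizer families is needed or feasible. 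Then a single strictly negative $d$--invariant at $x_1$ finishes: since $J_\alpha$ has genus one and $\tau(J_\alpha)=0$ \cite{MR2372849}, Sato's theorem \cite{Sato:2019a} makes it $\nu^+$--equivalent to the unknot, the theorems of \cite{KKK:2019aa} then let one replace $J_\alpha$ by the unknot without changing $d(Y_3(S),4x_1)$, and the computations of \cite{MR3109864} and \cite{arxiv:1910.14629} show this value is negative, contradicting its required vanishing. You cite the right sources for the $d$--invariant bounds, but without the $\nu^+$--equivalence mechanism and without the sign-coherent signature argument, the strategy as proposed would fail.
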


Elementary group theory gives the following.

\begin{lemma}\label{lem:simple-argument} 
The set of knots $\{K_n\}_{n \in \caln}$ is linearly independent in $\calt /(\text{Fix}(\rho)+\calt_\Delta)$ if and only if the set of knots $\{K_n\cs -\rho(K_n)  \}_{n \in \caln}$ is linearly independent in $\calt/\calt_\Delta$.
\end{lemma}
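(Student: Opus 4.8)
The plan is to regard this as a purely group-theoretic statement about the abelian group $\calt$, which I write additively, equipped with the involution $\rho$. Reversal is a homomorphism, $\rho(K\cs K')=\rho(K)\cs\rho(K')$, and $\rho^2=\mathrm{id}$, so $\rho$ is an order-two automorphism of $\calt$. I set $\phi=\mathrm{id}-\rho\co\calt\to\calt$; then $\phi(K_n)=K_n\cs-\rho(K_n)$ and $\ker\phi=\text{Fix}(\rho)$. Two elementary facts do all the work. First, $\calt_\Delta$ is $\rho$-invariant: reversal preserves the Alexander polynomial, so $\rho$ sends Alexander-polynomial-one knots to Alexander-polynomial-one knots, giving $\rho(\calt_\Delta)=\calt_\Delta$ and hence $\phi(\calt_\Delta)\subseteq\calt_\Delta$. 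Second, for every $z\in\calt$ the element $z+\rho(z)$ lies in $\text{Fix}(\rho)$, while $\phi(z)=z-\rho(z)$, so $2z=(z+\rho(z))+\phi(z)$.

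With this in hand I would prove the two implications directly. For the ``if'' direction, suppose $\{K_n\cs-\rho(K_n)\}=\{\phi(K_n)\}$ is independent in $\calt/\calt_\Delta$, and let $\sum a_n K_n\in\text{Fix}(\rho)+\calt_\Delta$ for integers $a_n$, almost all zero. Applying $\phi$ and using that $\phi$ annihilates $\text{Fix}(\rho)$ and carries $\calt_\Delta$ into itself yields $\sum a_n\phi(K_n)=\phi\!\left(\sum a_n K_n\right)\in\calt_\Delta$, so every $a_n=0$ and $\{K_n\}$ is independent in $\calt/(\text{Fix}(\rho)+\calt_\Delta)$. For the ``only if'' direction, suppose $\{K_n\}$ is independent in $\calt/(\text{Fix}(\rho)+\calt_\Delta)$, and let $\sum a_n\phi(K_n)\in\calt_\Delta$. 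Setting $z=\sum a_n K_n$ gives $z-\rho(z)\in\calt_\Delta$, whence the displayed identity shows $2z=(z+\rho(z))+(z-\rho(z))\in\text{Fix}(\rho)+\calt_\Delta$. Thus $\sum(2a_n)K_n\in\text{Fix}(\rho)+\calt_\Delta$, and independence forces $2a_n=0$, hence $a_n=0$ for all $n$.

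The only place demanding any care is the ``only if'' direction, where the natural inclusion $\phi^{-1}(\calt_\Delta)\supseteq\text{Fix}(\rho)+\calt_\Delta$ cannot be reversed on the nose: the computation produces $2z$, not $z$, in $\text{Fix}(\rho)+\calt_\Delta$. This is harmless precisely because the independence in question is over $\Z$, where $2a_n=0$ implies $a_n=0$; the argument would genuinely break if one worked with $\F_2$ coefficients. The only non-formal ingredient is the $\rho$-invariance of $\calt_\Delta$, which I would record explicitly since it is what allows $\phi$ and the splitting $2z=(z+\rho(z))+(z-\rho(z))$ to interact correctly with the quotient by $\calt_\Delta$.
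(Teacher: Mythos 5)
Your proof is correct and is exactly the elementary group-theoretic argument the paper intends (the paper states the lemma with no written proof beyond the phrase ``elementary group theory gives the following''). You correctly isolate the two needed ingredients---the $\rho$-invariance of $\calt_\Delta$ and the identity $2z=(z+\rho(z))+(z-\rho(z))$ with $z+\rho(z)\in\text{Fix}(\rho)$---and you handle the only delicate point, namely that the ``only if'' direction yields $2a_n=0$ rather than $a_n=0$, which suffices because the coefficients are integers.
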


This in turn is easily reduced to proving the following.

\begin{theorem}\label{thm:main3} Let $L$ be a knot with $\Delta_L(t) = 1$  and let
\[ K = 
\left(\cs_{n\in \caln} a_n(K_n \cs -\rho(K_n))\right) \cs L.
\] 
If $K=0\in \calt$ for some set of $a_n$ for which all but a  finite set of  $a_n$ are zero, then $a_n = 0$ for all $n$.
\end{theorem}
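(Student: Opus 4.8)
\medskip

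The plan is to assume that $K$ is smoothly slice and to force every $a_n$ to vanish. Each $K_n$ is topologically slice and $\Delta_L(t)=1$, so $K\in\calt$; since $H_1(Y_3(L))=0$ and each $m_n:=3n^2+3n+1$ is odd, we have $H_1(Y_3(K),\Z_2)=0$, and Theorem~\ref{thm:obstruct} produces an equivariant metabolizer $\calm\subseteq H_1(Y_3(K))$ on which $\bar{d}$ and $\eta$ both vanish. Writing $b_n=|a_n|$, the branched cover of a connected sum gives
\[
  H_1(Y_3(K))\;\cong\;\bigoplus_{n\in\caln}\big(\Z_{m_n}\big)^{4b_n},
\]
the $L$-summand contributing nothing. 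The first step is Theorem~\ref{thm:primes}: the $m_n$ are pairwise coprime, so the Chinese Remainder Theorem splits $\calm=\bigoplus_n\calm_n$, makes the linking form block diagonal, and exhibits each $\calm_n$ as an equivariant metabolizer for the $n$-th block on which the obstructions still vanish. It then suffices to derive a contradiction from $\calm_n$ whenever $b_n\ge 1$.

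Fix such an $n$. Using Theorem~\ref{thm:primes} I would choose a prime $p\mid m_n$ at which the Casson--Gordon discriminant associated with $J_\alpha$ is obstructive, and pass to the $p$-part $\Z_p^{4b_n}$. As $m_n=(n+1)^3-n^3$ and $p\nmid n$, the class $(n+1)n^{-1}$ is a nontrivial cube root of unity mod $p$, so $p\equiv1\bmod3$ and the order-three deck transformation has two distinct nonzero eigenvalues; the equivariant Lagrangian $\calm_{n,p}$ therefore splits as $A\oplus B$, with $A$ in one eigenspace $V$ and $B=A^{\perp}$ in the other eigenspace $W$. Each copy contributes to $V$ a class $\la_n^{(s)}$ from its $K_n$-factor and $\lb_n^{\prime(s)}$ from its $-\rho(K_n)$-factor, and to $W$ the classes $\lb_n^{(s)}$ and $\la_n^{\prime(s)}$; the form pairs $\la_n^{(s)}$ with $\lb_n^{(s)}$ and $\lb_n^{\prime(s)}$ with $\la_n^{\prime(s)}$. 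Here I would invoke the $K_n$-analogue of Lemma~\ref{lem:nonzero}, proved as in Section~\ref{sec:single knot} with the $d$-invariant input taken from~\cite{MR3109864} and extended to all $n$ by Cha~\cite{arxiv:1910.14629}: on the nonzero classes of one eigendirection $\bar{d}$ is nonzero of constant sign while $\eta=0$, and on the other $\eta\neq0$ (at the chosen $p$) while $\bar{d}=0$; Lemma~\ref{lem:meta} transports these statements to the $-\rho(K_n)$-factors.

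The decisive step is to trap $\calm_{n,p}$ in the $\eta$-detected classes. Since $\bar{d}$ is $\Q$-valued and additive, for $v\in A$ the number $\bar{d}(Y_3(K),v)$ is a sum of same-sign nonzero terms, one for each copy in which the $\la_n$-coordinate of $v$ is nonzero; its vanishing forces every such coordinate to vanish, so $A\subseteq\langle\lb_n^{\prime(s)}:s\rangle$. The same computation on $W$ gives $B\subseteq\langle\lb_n^{(s)}:s\rangle$. Because $\langle\lb_n^{\prime(s)}:s\rangle$ pairs nondegenerately with $\langle\la_n^{\prime(s)}:s\rangle$ and is orthogonal to $\langle\lb_n^{(s)}:s\rangle$, the identity $B=A^{\perp}$ and the Lagrangian count $\dim A+\dim B=2b_n$ upgrade these inclusions to equalities $A=\langle\lb_n^{\prime(s)}:s\rangle$ and $B=\langle\lb_n^{(s)}:s\rangle$. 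In particular $\lb_n^{(1)}\in B\subseteq\calm_{n,p}$, yet additivity gives $\eta(K,3,\lb_n^{(1)})=\pm\,\eta(K_n,3,\lb_n)\neq0$, contradicting the vanishing of $\eta$ on $\calm$. Hence $b_n=0$ for every $n$.

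The step I expect to fight hardest for is this confinement. For a single copy the five-case inventory of Lemma~\ref{lem:cases} already closes the argument, but with $b_n$ copies an equivariant Lagrangian can freely mix the $\bar{d}$- and $\eta$-detected directions, and the Casson--Gordon discriminant, being essentially $\Z_2$-valued, is annihilated by an even number of copies and so cannot by itself exclude such mixing. What rescues the argument is the rationality and constant sign of the $d$-invariant computation, which let $\bar{d}$ alone pin the metabolizer onto the $\eta$-detected directions, after which a single-copy class revives a genuine Casson--Gordon obstruction. The remaining technical burden is the $K_n$-analogue of Lemma~\ref{lem:nonzero}; its $\eta$-half reduces to the failure of an Alexander-polynomial norm condition for $J_\alpha$ at some prime dividing $m_n$, which is precisely what Theorem~\ref{thm:primes} and Appendix~\ref{sec:appendix} are arranged to provide.
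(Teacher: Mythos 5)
Your global skeleton (splitting the metabolizer by coprimality via Theorem~\ref{thm:primes}, decomposing each block into deck-transformation eigenspaces, then confining the metabolizer with one invariant and killing a single surviving class with the other) is exactly the paper's, but you have swapped which invariant does which job, and the swap is where the proof breaks. Your confinement step needs $\bar{d}(Y_3(K_n),c\la)$ to be nonzero \emph{and of one sign for every nonzero multiple $c$} (even with the trick of rescaling $v$, you still need $\bar{d}(Y_3(K_n),c\la)\le 0$ for \emph{all} $c$, so that terms from different copies cannot cancel). The computations available give far less: \cite[p.~2141]{MR3109864} and \cite[Theorem~4.2]{arxiv:1910.14629} establish strict negativity only at the single class $2^*\la$ (for $n=1$, $4x_1$), and the orbit of that class under conjugation and the deck action exhausts all nonzero classes only when the relevant prime is $7$, i.e.\ only for $n=1$. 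For general $n$ the primes dividing $3n^2+3n+1$ are large and uncontrolled, and nothing rules out $d$ vanishing, or changing sign, at the other multiples; so vanishing of your sum does not force the $\la$-coordinates to vanish, and the claimed equalities $A=\langle\lb_n^{\prime(s)}\rangle$, $B=\langle\lb_n^{(s)}\rangle$ do not follow. The paper does the confinement with the Casson--Gordon \emph{signature} invariant precisely because the required same-sign property is automatic there: $J_\alpha={\rm Wh}(T(2,-3),-1)$ has the trefoil's signature function, which is $0$ or $-2$ everywhere, so every copy contributes a nonpositive term at every character, and one coordinate can be made strictly negative by replacing $h$ with a suitable multiple (see the proof of Lemma~\ref{lem:eigenspace} and its modification (5)). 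The $d$-invariant is then needed only at the one class $2^*x_1$ that confinement has already placed in the metabolizer, which is exactly what Sato's $\nu^+$ result, \cite{KKK:2019aa}, and Cha's computation deliver.

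The second gap is your $\eta$-half. You assert that the contradiction $\eta(K_n,3,\lb)\ne 0$ "reduces to the failure of an Alexander-polynomial norm condition for $J_\alpha$ at some prime dividing $m_n$, which is precisely what Theorem~\ref{thm:primes} and Appendix~\ref{sec:appendix} are arranged to provide." The appendix provides nothing of the sort: it only produces square-free, pairwise coprime, at-most-two-prime factorizations of the numbers $3n_i^2+3n_i+1$, which is what makes your (and the paper's) first splitting step work; it says nothing about $\Delta_{J_\alpha}$. Verifying a discriminant/norm obstruction for a fixed auxiliary knot at the infinitely many unknown primes dividing the $3n^2+3n+1$ is a genuine number-theoretic problem that is solved nowhere in the paper or its references --- indeed this is exactly why the authors abandon the discriminant invariant of Section~\ref{sec:single knot} when passing to the infinite family, change the band knot to ${\rm Wh}(T(2,-3),-1)$, and switch to signature invariants, which are obstructive at every odd prime because the trefoil signature is $-2$ on $(\frac13,\frac12]$. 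So both halves of your role-swap rest on inputs that are not available; repairing them pushes you back to the paper's assignment of roles: signatures (same-sign at all primes) for confinement, the $d$-invariant (one explicit class) for the final contradiction.
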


The rest of this section is devoted to proving Theorem~\ref{thm:main3}.

%%%%%%%%%%%%SUBSECTION%%%%%%%%%%%
\subsection{Proof of Theorem~\ref{thm:main3}, First Step}\label{subsec:proof-first-step}

In this subsection we show how the argument is reduced to a statement about the $\overline{d}$--invariants and $\eta$--invariants  of $a_n(K_n \cs -\rho(K_n))$ for each $n\in \caln$.  

First, we give the following lemma.
\begin{lemma} \label{lem:vanish} 
$d( Y_3( K_n \cs -\rho(K_n)), 0) = 0$ and $\eta( K_n \cs -\rho(K_n) ,3 , 0) = 0$.
\end{lemma}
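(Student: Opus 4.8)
The plan is to prove Lemma~\ref{lem:vanish} by treating the two summands of $K_n \cs -\rho(K_n)$ together and exploiting the fact that the $0$--character sees only the underlying $3$--manifold, not the choice of Spin$^c$--structure or homology class. For the $\overline{d}$--statement, observe that $\overline{d}(Y,0) = d(Y,0) - d(Y,0) = 0$ by the very definition of $\overline{d}$ recorded in Section~\ref{sec:obstructions}; so what actually requires proof is the unnormalized assertion $d(Y_3(K_n \cs -\rho(K_n)),0)=0$. First I would use the connected-sum additivity of the branched covers, namely $Y_3(K_n \cs -\rho(K_n)) = Y_3(K_n)\cs Y_3(-\rho(K_n))$, together with the additivity of the $d$--invariant under connected sum at the $0$--character, to reduce the claim to comparing $d(Y_3(K_n),0)$ with $d(Y_3(-\rho(K_n)),0)$.

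The key point is then that $Y_3(-\rho(K_n))$ is orientation-reversingly diffeomorphic to $Y_3(K_n)$. Indeed, as noted in Section~\ref{subsec:branch}, $Y_3(K_n)$ and $Y_3(\rho(K_n))$ are literally the same space (reversing the string orientation does not change the branched cover, only the eigenspace labeling via the inverted deck transformation), while prepending $-$ reverses the ambient orientation of $S^3$ and hence of the cover. Under orientation reversal the $d$--invariant changes sign at the corresponding Spin$^c$--structure, and the $0$--class is carried to the $0$--class, so $d(Y_3(-\rho(K_n)),0) = -d(Y_3(K_n),0)$. Feeding this into the additivity relation gives $d(Y_3(K_n \cs -\rho(K_n)),0) = d(Y_3(K_n),0) + d(Y_3(-\rho(K_n)),0) = d(Y_3(K_n),0) - d(Y_3(K_n),0) = 0$, as desired.

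For the Casson--Gordon half, the argument runs in exact parallel. By Gilmer's additivity theorem quoted in Section~\ref{sec:obstructions}, $\eta(K_n \cs -\rho(K_n),3, 0 \oplus 0) = \eta(K_n,3,0) + \eta(-\rho(K_n),3,0)$, where the splitting $0 = 0 \oplus 0$ respects the decomposition $H_1(Y_3(K_n \cs -\rho(K_n))) \cong H_1(Y_3(K_n)) \oplus H_1(Y_3(-\rho(K_n)))$. The orientation-reversal identity $\eta(-\rho(K_n),3,0) = -\eta(K_n,3,0)$ — which is precisely the content underlying Lemma~\ref{lem:meta}, specialized to the trivial character — then cancels the two terms. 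One should note that $\eta$ is well defined on the trivial character (a zero character contributes a trivial Casson--Gordon signature), so there is no issue invoking additivity there.

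The main obstacle, such as it is, lies not in any delicate estimate but in cleanly justifying the orientation-reversal behavior of both invariants and confirming that the $0$--character is genuinely fixed by all the relevant identifications (the deck-transformation inversion, the ambient orientation reversal, and the connected-sum splitting of first homology). Once these functorial sign conventions are pinned down — all of which are already implicit in the discussion of $\rho(R_*)$ and $-R_*$ in Section~\ref{subsec:branch} and in Lemma~\ref{lem:meta} — the two equalities follow formally. I would therefore expect this lemma to be essentially a bookkeeping consequence of additivity plus orientation reversal, with no new geometric input required beyond what has already been assembled.
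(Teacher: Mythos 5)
Your proof is correct, and its $d$--invariant half is exactly the paper's argument: additivity of $d$ under connected sum, the fact that $Y_3(K_n)$ and $Y_3(\rho(K_n))$ are orientation-preservingly diffeomorphic, and the sign change of $d$ under orientation reversal. Where you genuinely diverge is the Casson--Gordon half. The paper does not argue by cancellation there: it invokes the Gilmer--Litherland satellite formula to see that $\eta(K_n \cs -\rho(K_n),3,0)$ is independent of the companion knots $J_\alpha$ and $J_\beta$, then replaces both by unknots, notes that $R_n(U,U)$ is smoothly slice, and concludes that the invariant vanishes. You instead run the same cancellation scheme used for $d$: Gilmer's additivity splits $\eta(K_n \cs -\rho(K_n),3,0\oplus 0)$ into two terms, and the orientation-reversal identity $\eta(-\rho(K_n),3,0)=-\eta(K_n,3,0)$ --- Lemma~\ref{lem:meta} specialized to the trivial character, which the paper itself describes as the trivial statement that reversing the orientation of a space negates the invariant --- makes the two terms cancel. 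Your route is more uniform (one mechanism handles both invariants) and uses only ingredients already quoted in Section~\ref{sec:obstructions}; the paper's route gives a direct vanishing rather than a cancellation and leans on the satellite machinery it deploys again in the proof of Lemma~\ref{lem:eigenspace}, so it avoids having to pin down the sign behavior of $\eta$ under mirroring. One small caution: your parenthetical claim that ``a zero character contributes a trivial Casson--Gordon signature'' is stronger than anything you actually use --- taken at face value it would make each summand vanish outright and render the cancellation superfluous --- so it is best dropped; Gilmer's additivity as quoted in the paper applies to the trivial character with no extra hypothesis, which is all your argument needs.
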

\begin{proof} The $d$--invariant and $\eta$--invariant are additive under connected sums.  

With regards to the $d$--invariant, the  spaces $Y_3(K_n) $ and $Y_3(\rho(K_n))$ are orientation-preserving diffeomorphic, and orientation reversal of a 3--manifold changes the sign of the $d$--invariant.

With regards to the $\eta$--invariant, from results going back to Gilmer~\cite{MR711523}  and Litherland~\cite{MR780587}, the value of $\eta(K_n \cs -\rho(K_n) ,3 , 0)$ is independent of $J_\alpha$ and $J_\beta$.   In the case that $J_\alpha$ and $J_\beta$ are both unknotted, $R_n(J_\alpha, J_\beta)$ is slice, and thus the Casson-Gordon invariant vanishes.
\end{proof}

The theorem below follows from Theorem~\ref{thm:obstruct} and Lemma~\ref{lem:vanish}.

\begin{theorem}\label{lem:metabolizer}  If $K = 0 \in \calt$, then there exists a subgroup $\calm \subset H_1(Y_3(K))$ for which: (1) $\big| \calm \big|^2 = \big| H_1(Y_3(K)) \big|$; (2) $\calm$ is a metabolizer for the linking form on $H_1(Y_3(K))$ and $\calm$ is invariant under the action of the order three deck transformation of $Y_3(K)$; (3) for all $z \in \calm$, $d(Y_3(K),z)) = 0$ and for all $z \in \calm$ of prime power order, $\eta(K,3, z) = 0$.
\end{theorem}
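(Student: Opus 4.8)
The plan is to derive Theorem~\ref{lem:metabolizer} as essentially a direct consequence of Theorem~\ref{thm:obstruct}, with Lemma~\ref{lem:vanish} used only to normalize the base point of the $\bar d$--invariant. The starting hypothesis is that $K = 0 \in \calt$, meaning $K$ is smoothly slice. First I would verify the hypothesis needed to invoke Theorem~\ref{thm:obstruct}, namely that $H_1(Y_3(K), \Z_2) = 0$. Since $K$ is a connected sum of copies of $K_n \cs -\rho(K_n)$ together with a knot $L$ of trivial Alexander polynomial, and $H_1(Y_3(K_n)) \cong \Z_{3n^2+3n+1} \oplus \Z_{3n^2+3n+1}$ while $H_1(Y_3(L)) = 0$, the group $H_1(Y_3(K))$ is a direct sum of copies of $\Z_{3n^2+3n+1}$. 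Each integer $3n^2+3n+1$ is odd, so the group has no $2$--torsion and the $\Z_2$--homology vanishes, as required.

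With the hypothesis in place, Theorem~\ref{thm:obstruct} immediately produces a subgroup $\calm \subset H_1(Y_3(K))$ satisfying: $\calm$ is a metabolizer for the linking form, $\calm$ is invariant under the order--three deck transformation, $\bar d(Y_3(K), \chi) = 0$ for all $\chi \in \calm$, and $\eta(K, 3, \chi) = 0$ for all $\chi \in \calm$ of prime power order. This already yields conclusions (2) and the $\eta$--part of (3). For conclusion (1), I would note that a metabolizer $\calm = \calm^\perp$ for a nonsingular linking form on a finite abelian group always satisfies $|\calm|^2 = |H_1(Y_3(K))|$; this is the standard order count for a self-annihilating subgroup under a nonsingular pairing, and it is implicit in the definition of metabolizer recalled after Theorem~\ref{thm:obstruct}.

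The only step requiring the extra input of Lemma~\ref{lem:vanish} is upgrading the $\bar d$--vanishing to genuine $d$--vanishing in conclusion (3). By definition $\bar d(Y_3(K), z) = d(Y_3(K), z) - d(Y_3(K), 0)$, so for each $z \in \calm$ the relation $\bar d(Y_3(K), z) = 0$ gives $d(Y_3(K), z) = d(Y_3(K), 0)$. Lemma~\ref{lem:vanish} (combined with the additivity of the $d$--invariant and the fact that $d(Y_3(L), 0) = 0$ since $H_1(Y_3(L)) = 0$) shows $d(Y_3(K), 0) = 0$. Therefore $d(Y_3(K), z) = 0$ for all $z \in \calm$, completing conclusion (3).

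I do not expect a serious obstacle here, since the theorem is a bookkeeping assembly of already--established facts rather than a new argument. The one point demanding slight care is confirming that the base-point computation $d(Y_3(K), 0) = 0$ of Lemma~\ref{lem:vanish}, stated there for a single summand $K_n \cs -\rho(K_n)$, propagates to the full connected sum $K$; this follows by additivity of $d$ together with the vanishing contribution of the trivial--Alexander--polynomial summand $L$, whose branched cover has trivial first homology. Once that is checked, the remaining verifications are immediate from Theorem~\ref{thm:obstruct} and the definition of $\bar d$.
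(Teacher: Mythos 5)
Your overall architecture matches the paper's (Theorem~\ref{thm:obstruct} plus Lemma~\ref{lem:vanish}), and several steps are fine: the verification that $H_1(Y_3(K),\Z_2)=0$ since each $3n^2+3n+1$ is odd and $H_1(Y_3(L))=0$, the order count $|\calm|^2=|H_1(Y_3(K))|$ for a metabolizer of a nonsingular linking form, and the $\eta$--part of (3). But there is a genuine gap in the one step that carries the real content: your claim that $d(Y_3(L),0)=0$ \emph{because} $H_1(Y_3(L))=0$. Trivial first homology does not force a trivial $d$--invariant: an integral homology sphere can have $d\neq 0$ (the Poincar\'e sphere is the basic example), and in particular the $3$--fold branched cover of a topologically slice knot with $\Delta_L(t)=1$ can have nonzero $d$. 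Indeed, if your claim were true, the invariant $\bar{d}$ would be superfluous: the entire reason the paper introduces $\bar{d}$ and works modulo $\calt_\Delta$ (see Section~2) is that a trivial-Alexander-polynomial summand $L$ shifts every $d$--invariant of the connected sum by the uncontrolled constant $d(Y_3(L),0)$, which cannot be assumed to vanish. Since your derivation of $d(Y_3(K),0)=0$ runs through this false claim, the upgrade from $\bar{d}$--vanishing to genuine $d$--vanishing --- i.e., the $d$--part of conclusion (3) --- is not established by your argument.

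The repair is not to compute $d(Y_3(K),0)$ by additivity at all, but to use the stronger form of Theorem~\ref{thm:obstruct} that the paper records immediately after its statement: when $K$ is smoothly slice, the metabolizer $\calm$ is the image of $H_2(W,Y_3(K))$, where $W$ is the $3$--fold cover of $B^4$ branched over a slice disk, a rational homology ball; the key result quoted in Section~2.2 then gives $d(Y_3(K),\chi)=0$ --- not merely $\bar{d}(Y_3(K),\chi)=0$ --- for \emph{every} $\chi\in\calm$. This yields conclusion (3) directly, and in particular $d(Y_3(K),0)=0$ by taking $\chi=0\in\calm$. (A posteriori, combining this with Lemma~\ref{lem:vanish} and additivity shows $d(Y_3(L),0)=0$ under the hypothesis $K=0\in\calt$; but that is an output of the argument, and using it as an input, as you do, is circular.) Lemma~\ref{lem:vanish} is then genuinely needed not for the theorem itself but for the passage to Corollary~\ref{cor:vanishing}, where one must strip off the summands $S_{n_j}$, $j\neq i$, and $L$ to isolate $d(Y_3(S_{n_i}),z)$.
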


We write 
\[
S_n = a_n(K_n\cs -\rho(K_n)).
\]  
Observe that for each $n_i\in \caln$, $\Z_{3n_i^2+3n_i+1}\cong \Z_{p_{2i-1}} \oplus \Z_{p_{2i}}$, and hence there is a natural decomposition 
\[
H_1(Y_3(S_{n_i})) \cong  \big( ( \Z_{p_{2i-1}} \oplus \Z_{p_{2i}}) \oplus  (\Z_{p_{2i-1}} \oplus \Z_{p_{2i}}) \big)^{2|a_{n_i}|} .
\]
Also observe that since $\Delta_L(t)=1$, we have $H_1(Y_3(L))=0$. Since all $p_i$ are relatively prime, the metabolizer $\calm$ obtained from Theorem~\ref{lem:metabolizer} naturally splits into the direct sum of its $p$--primary components $\calm_p$:  
\[
\calm = \bigoplus_{i \ge 1} \left(\calm_{p_{2i -1}} \oplus \calm_{p_{2i}}\right),
\]
where $\calm_{p_{2i -1}} \oplus \calm_{p_{2i}}$ is a metabolizer for the linking form on $H_1(Y_3(S_{n_i}))$.
Since only a finite set of the $a_i$ are nonzero, only a finite set of the $ \calm_p$ are nonzero.  We now have the following corollary of Theorem~\ref{lem:metabolizer}.
\begin{corollary}\label{cor:vanishing}
If $K=0\in \calt$, then for all $z \in \calm_{p_{2i -1}} \oplus \calm_{p_{2i}}$,  $d\left(Y_3(S_{n_i}), z\right) = 0$, and for all $z \in \calm_{p_{2i -1}} \oplus \calm_{p_{2i}}$ of prime power order, $\eta \left( S_{n_i}, 3, z\right) = 0$.
\end{corollary}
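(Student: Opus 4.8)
The plan is to obtain the corollary directly from Theorem~\ref{lem:metabolizer}, by restricting the global vanishing statements on the metabolizer $\calm \subset H_1(Y_3(K))$ to a single connected-sum summand and invoking additivity of the two invariants. Assume $K = 0 \in \calt$. First I would apply Theorem~\ref{lem:metabolizer} to produce the equivariant metabolizer $\calm$, so that $d(Y_3(K), z) = 0$ for every $z \in \calm$ and $\eta(K, 3, z) = 0$ for every $z \in \calm$ of prime power order. Then I would use the splitting $\calm = \bigoplus_{i\ge 1}(\calm_{p_{2i-1}} \oplus \calm_{p_{2i}})$ recorded just above: because the integers $3n_i^2 + 3n_i + 1 = p_{2i-1}p_{2i}$ are pairwise coprime (this is exactly the content of Theorem~\ref{thm:primes}) and $H_1(Y_3(L)) = 0$, each summand $\calm_{p_{2i-1}} \oplus \calm_{p_{2i}}$ is contained in the single factor $H_1(Y_3(S_{n_i}))$ of the connected-sum decomposition $H_1(Y_3(K)) \cong \bigoplus_n H_1(Y_3(S_n))$.

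Next, given $z \in \calm_{p_{2i-1}} \oplus \calm_{p_{2i}}$, I would regard $z$ as the element of $\calm$ supported on the factor $S_{n_i}$ and equal to $0$ on every other $S_{n_j}$ and on $L$. Additivity of $\bar{d}$ then gives
\[
\bar{d}(Y_3(K), z) = \bar{d}(Y_3(S_{n_i}), z) + \sum_{j\ne i}\bar{d}(Y_3(S_{n_j}), 0) + \bar{d}(Y_3(L), 0),
\]
and since $\bar{d}(Y, 0) = 0$ for every $Y$, the right-hand side collapses to $\bar{d}(Y_3(S_{n_i}), z)$. As both $z$ and $0$ lie in $\calm$, the definition $\bar{d}(Y_3(K), z) = d(Y_3(K), z) - d(Y_3(K), 0)$ together with Theorem~\ref{lem:metabolizer}(3) gives $\bar{d}(Y_3(K), z) = 0$, whence $\bar{d}(Y_3(S_{n_i}), z) = 0$. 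Finally, Lemma~\ref{lem:vanish} and additivity supply $d(Y_3(S_{n_i}), 0) = 0$, so $d(Y_3(S_{n_i}), z) = \bar{d}(Y_3(S_{n_i}), z) = 0$, as required.

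The $\eta$-statement follows the same pattern. For $z \in \calm_{p_{2i-1}} \oplus \calm_{p_{2i}}$ of prime power order — note that this order is unchanged when $z$ is viewed in $\calm$, since $z$ is supported on one factor — Gilmer's additivity gives
\[
\eta(K, 3, z) = \eta(S_{n_i}, 3, z) + \sum_{j\ne i}\eta(S_{n_j}, 3, 0) + \eta(L, 3, 0).
\]
The characters $0$ are trivial, so every term but the first vanishes, using that Casson-Gordon invariants of trivial characters are zero (which also disposes of the $L$ term) together with Lemma~\ref{lem:vanish}; Theorem~\ref{lem:metabolizer}(3) then forces $\eta(K, 3, z) = 0$, hence $\eta(S_{n_i}, 3, z) = 0$.

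I expect no serious obstacle: the corollary is essentially a bookkeeping argument, and its only genuine structural input is the coprimality of the orders $\big|H_1(Y_3(S_{n_i}))\big|$ furnished by Theorem~\ref{thm:primes}, which is what lets the global metabolizer split factor-by-factor. The one point requiring care is the treatment of the background terms — the factors $S_{n_j}$ with $j \ne i$ and the Alexander-polynomial-one knot $L$ — at the trivial character; passing through $\bar{d}$ rather than $d$ (together with Lemma~\ref{lem:vanish}) handles these cleanly, since $\bar{d}(\cdot, 0)$ and $\eta(\cdot, 3, 0)$ vanish identically and the conversion back to $d$ costs only the vanishing of $d(Y_3(S_{n_i}), 0)$.
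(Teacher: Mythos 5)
Your proof is correct and follows essentially the same route as the paper, which presents the corollary as an immediate consequence of Theorem~\ref{lem:metabolizer} together with the primary splitting $\calm = \bigoplus_{i}\left(\calm_{p_{2i-1}} \oplus \calm_{p_{2i}}\right)$ (justified by the coprimality from Theorem~\ref{thm:primes} and $H_1(Y_3(L))=0$) and additivity of the invariants. Your pass through $\bar{d}$ and Lemma~\ref{lem:vanish} to dispose of the trivial-character contributions of the other summands and of $L$ is exactly the bookkeeping the paper leaves implicit.
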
 

%%%%%%%%%%%%SUBSECTION%%%%%%%%%%%

\subsection{Proof of Theorem~\ref{thm:main3}, Second Step}\label{subsec:proof-second-step}
Observe that for each $n_i=p_{2i-1}p_{2i} \in \caln$, at least one of $p_{2i-1}$ and $p_{2i}$ is greater than one.  By reordering, we can thus assume that for all $i$, $p_{2i -1} >1$.  In the appendix, we observe that  $n_1=1$, $p_1 =7$, $p_2=1$, and therefore $\calm_{p_1}\oplus \calm_{p_2}=\calm_7$.

In this subsection, first we will give a proof that if $K=0\in \calt$, then   $a_1=0$. Then, we will explain how that proof can be modified to show that $a_n=0$  for all $n\in \caln$.

\smallskip

\noindent{\bf Proof that $\boldsymbol{a_1=0}$:} Suppose $K=0\in \calt$. For brevity, let $a=a_1$ and $S=S_1$. Suppose $a\ne 0$. By changing the orientation if necessary, we may assume $a > 0$. 
Notice that 
\[
H_1(Y_3(S))=(\Z_7\oplus \Z_7\oplus\Z_7\oplus \Z_7)^a=\bigoplus_{i=1}^a\left(\langle x_i\rangle \oplus \langle y_i \rangle \oplus \langle x_i'\rangle \oplus \langle y_i' \rangle \right),
\]
where $x_i$ (respectively, $y_i$) is a lift of the curve $\alpha$ (respectively, $\beta$) to the $i$--th copy of $K_1=R_1(J_\alpha,J_\beta)$ in $Y_3(S)$, and  $x_i'$ (respectively, $y_i'$) is a lift of the curve $\beta$ (respectively, $\beta$) to the $i$--th copy of $\rho(K_1)$ in $Y_3(S)$.

On the homology group $H_1(Y_3(S))$ the deck transformation of order three acts. Viewing $H_1(Y_3(S))$ as a vector space over $\Z_7$, $H_1(Y_3(S))$ splits into the direct sum of the 2--eigenspace and the 4--eigenspace. We make a choice of orientation of $K_1$ such that the 2--eigenspace is generated by the $x_i$ and $y_i'$, and the 4--eigenspace is generated by $y_i$ and $x_i'$.

Since the metabolizer $\calm_{p_1}\oplus \calm_{p_2}$ is invariant under the action of the deck transformations of $Y_3(S)$, one can easily see that it splits into the direct sum of the 2--eigenspace  and the 4--eigenspace,  $E_2\oplus E_4$,
such that 
\[
E_2 \subset \bigoplus_{i=1}^a\left(\langle x_i\rangle \oplus \langle y_i' \rangle \right) \mbox{ and } 
E_4 \subset \bigoplus_{i=1}^a\left(\langle y_i\rangle \oplus \langle x_i' \rangle \right).
\]

\begin{lemma}\label{lem:eigenspace} If $K=0\in \calt$, then 
$E_2 = \bigoplus_{i=1}^a\langle x_i\rangle$ and $E_4=\bigoplus_{i=1}^a\langle x_i'\rangle.$
\end{lemma}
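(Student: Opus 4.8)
The plan is to determine $E_2$ and $E_4$ separately by feeding the vanishing statements of Corollary~\ref{cor:vanishing} into the additivity of the $\bar d$– and $\eta$–invariants. Since we already know $E_2\subseteq\bigoplus_i(\langle x_i\rangle\oplus\langle y_i'\rangle)$ and $E_4\subseteq\bigoplus_i(\langle y_i\rangle\oplus\langle x_i'\rangle)$, it is enough to eliminate the ``$\beta$–lift'' contributions: no nonzero element of $E_2$ may have a $y_i'$–component, and no nonzero element of $E_4$ may have a $y_i$–component. A dimension count will then promote the resulting inclusions $E_2\subseteq\bigoplus_i\langle x_i\rangle$ and $E_4\subseteq\bigoplus_i\langle x_i'\rangle$ to equalities.

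First I would expand the $\bar d$–invariant on a general element of $E_2$. Writing $z=\sum_i(b_ix_i+c_iy_i')$ and using additivity of $\bar d$ over the connected sum $Y_3(S)=\#_i\big(Y_3(K_1)\#Y_3(-\rho(K_1))\big)$ together with the sign–reversal identities of Lemma~\ref{lem:meta}, the invariant $\bar d(Y_3(S),z)$ collapses to a single sum of one–summand values attached to the $y_i'$–components. The decisive input is the infinite–family analogue of Lemma~\ref{lem:nonzero}(3): this one–knot $\bar d$–value is not merely nonzero but has a \emph{fixed sign} on the relevant eigendirection — the exact counterpart of the bound $d(Y_3(R(D,U)),\lb)\le-3/2$ used in the single example, now supplied by the Cochran–Harvey–Horn calculation as sharpened by Cha. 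Because $d(Y_3(S),0)=0$ by Lemma~\ref{lem:vanish}, Corollary~\ref{cor:vanishing} forces $\bar d(Y_3(S),z)=0$; a sum of rationals of one fixed sign can vanish only when each summand does, so every $y_i'$–component of $z$ must be zero. Hence $E_2\subseteq\bigoplus_i\langle x_i\rangle$, and the mirror–image computation on $E_4$, isolating the $x_i'$–factors of $-\rho(K_1)$, gives $E_4\subseteq\bigoplus_i\langle x_i'\rangle$.

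Finally I would invoke the metabolizer bookkeeping. As $\calm_7=E_2\oplus E_4$ is a metabolizer for the nonsingular linking form on a group of order $7^{4a}$, we have $\dim_{\Z_7}E_2+\dim_{\Z_7}E_4=2a$; together with the two inclusions, each into a space of dimension $a$, this forces $\dim E_2=\dim E_4=a$ and therefore the asserted equalities $E_2=\bigoplus_{i=1}^a\langle x_i\rangle$ and $E_4=\bigoplus_{i=1}^a\langle x_i'\rangle$.

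The step I expect to be the main obstacle is precisely the sign–definiteness of the one–summand $\bar d$–invariant: mere nonvanishing would be useless here, since a sum of nonzero rationals of mixed signs can cancel and the whole cancellation–free argument would collapse. Securing a uniform sign is where the explicit Heegaard Floer computation does the real work, including the care (as in Lemmas~\ref{lem:d-unknot}--\ref{lem:trivial}) needed to pass from $d$ to $\bar d$. One must also verify that the $\eta$–contributions and the opposite–eigenvalue $\bar d$–contributions genuinely drop out of $\bar d(Y_3(S),z)$, via Lemma~\ref{lem:meta} and the single–knot vanishing statements, so that what remains really is a one–signed sum indexed only by the components one wishes to eliminate.
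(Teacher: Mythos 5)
Your overall architecture (eliminate the ``$\beta$--lift'' components of $E_2$ and $E_4$, then promote the inclusions to equalities by a dimension count) matches the paper's, and the counting step at the end is fine; but the engine you propose for the elimination step does not exist. The ``fixed-sign, nonvanishing one-summand $\bar d$--value attached to the $y_i'$--components'' is not supplied by Cochran--Harvey--Horn or Cha, and in fact it is zero. The point you have missed is that in Section~\ref{sec:infinite family} the roles of the two bands are \emph{swapped} relative to Section~\ref{sec:single knot}: here $J_\beta={\rm Wh}(T(2,3),0)$ is the $\tau=1$, topologically slice knot, while $J_\alpha={\rm Wh}(T(2,-3),-1)$ has genus one and $\tau(J_\alpha)=0$. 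A class with nonzero $y_i'$--component pairs, under the linking form, only with the lifts of $\alpha$ in the corresponding copy of $-\rho(K_1)$, so the only infection such a character can detect is $J_\alpha$. But $J_\alpha$ is $\nu^+$--equivalent to the unknot (Sato \cite{Sato:2019a}), so by \cite{KKK:2019aa} the $d$--invariants are unchanged when $J_\alpha$ is replaced by the unknot; and $R_1(U,J_\beta)$ is smoothly slice with the $\beta$--lifts lying in the metabolizer determined by its slice disk, so those $d$--invariants vanish, exactly as in Lemma~\ref{lem:trivial}. The CHH/Cha nonvanishing that you invoke, $d(Y_3(S'),4x_1)<0$, lives on the $x$--directions --- the characters that see the $J_\beta$--infection --- not on the $y'$--directions. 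Indeed, if you run your additivity computation with the correct inputs, it eliminates the $x_i$--components rather than the $y_i'$--components and yields $E_2\subset\bigoplus_i\langle y_i'\rangle$, the opposite of the inclusion you want.

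The missing idea is that this lemma is precisely where the Casson--Gordon obstruction, not the Heegaard Floer one, must do the work; your plan to let ``the $\eta$--contributions genuinely drop out'' discards the only tool that can see the $y'$--directions. The paper applies the Gilmer--Litherland satellite formula to write $\eta(S,3,h)$ as a sum of Levine--Tristram signatures of $J_\alpha$ (the base terms and all $J_\beta$--terms vanish because $J_\beta$ is topologically slice), and the sign-definiteness you correctly identify as essential comes from the fact that $J_\alpha$ has the Seifert form of the right-handed trefoil, so $\sigma_r(J_\alpha)\le 0$ for all $r$ and $\sigma_{b/7}(J_\alpha)+\sigma_{2b/7}(J_\alpha)+\sigma_{4b/7}(J_\alpha)<0$ for $b\ne 0$; the $d$--invariant is then held in reserve for the following lemma, where it kills the metabolizer $\bigoplus_i\left(\langle x_i\rangle\oplus\langle x_i'\rangle\right)$. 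One could in principle reorganize the argument in your direction --- use $d$--invariants first to force $\calm_7=\bigoplus_i\left(\langle y_i\rangle\oplus\langle y_i'\rangle\right)$ and then kill that metabolizer --- but the killing step would still require the $\eta$--signature argument, so a proof of this lemma, or of Theorem~\ref{thm:main3}, using only $\bar d$ as you propose cannot be completed.
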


\begin{proof}
Recall, we are working now only with $n_1 = 7$ and will describe the extension to all $n_i$ later.  It suffices to show that $E_2 \subset \bigoplus_{i=1}^a\langle x_i\rangle$ and $E_4 \subset \bigoplus_{i=1}^a\langle x_i'\rangle$ since the order of the metabolizer $\calm_{p_1}\oplus \calm_{p_2}$, which is $7^{2a}$, is the same as that of the direct sum of $\bigoplus_{i=1}^a\langle x_i\rangle$ and $\bigoplus_{i=1}^a\langle x_i'\rangle$.

Suppose that $E_2$ is not contained in $\bigoplus_{i=1}^a\langle x_i\rangle$. Then, in $E_2$ there exists an element 
\[
h=(h_1, h_1', h_2, h_2', \ldots, h_a, h_a')\in \bigoplus_{i=1}^a\left(\langle x_i\rangle \oplus \langle y_i' \rangle \right) 
\]
such that $h_k'\ne 0$ in $\langle y_k'\rangle=\Z_7$ for some $1\le k\le a$. 

The Casson-Gordon invariant that we will use in this section is the Casson-Gordon {\it signature invariant}, which we also denote by $\eta$. 
Let $\sigma_r(J_\alpha)$ denote the Levine-Tristram signature function of $J_\alpha$ evaluated at $e^{2\pi r\sqrt{-1}}$.  As described earlier, results of Gilmer~\cite{MR711523} and Litherland~\cite{MR780587} describe how the value of $\eta(S,3,h)$ is determined by the values of $\eta(R_1(U, J_\beta),h)$ along with values of $\sigma_r(J_\alpha)$ for specified values of $r$.      Because   $J_\beta$ is topologically slice, Casson-Gordon invariants cannot distinguish  $R_1(U,J_\beta)$ from $R_1(U,U)$, and for this knot all possible Casson-Gordon invariants vanish.    One concludes that  the relevant values of $\eta(R_1(U,J_\beta), h)$ will vanish.  Combining these observations yields 
\[
\eta(S,3,h)= \sum_{i=1}^a \epsilon_i\left( \sigma_{b_i/7}(J_\alpha) +\sigma_{2b_i/7}(J_\alpha)+\sigma_{4b_i/7}(J_\alpha) \right),
\]
where $b_i\in \Z_7$, and $\epsilon_i=0$ if $h_i'=0$ and $\epsilon_i =1$ if $h_i'\ne 0$. The knot $J_\alpha$ has the same Seifert form as the right-handed trefoil, and therefore
\[
\sigma_r(J_\alpha) = 
\begin{cases}	 
0 	&0\le r< \frac13\\ 
-2	& \frac13<r \le \frac12. 
\end{cases} 
\]
Therefore, we have $\eta(S,3,h)\le 0$. We are assuming that $h'_k \ne 0$, so $\epsilon_k=1$ and $b_k \ne 0 \in \Z_7$. Regardless of the value of $b_k$, 
\[
\sigma_{b_k/7}(J_\alpha) +\sigma_{2b_k/7}(J_\alpha)+\sigma_{4b_k/7}(J_\alpha) < 0.
\] 
It follows that $\eta(S,3,h)<0$, which contradicts Corollary~\ref{cor:vanishing}. One can also show $E_4 \subset \bigoplus_{i=1}^a\langle x_i'\rangle$, similarly.
\end{proof}

\begin{lemma}  If $K=0\in \calt$, then $a_1 = 0$.
\end{lemma}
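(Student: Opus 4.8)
The goal is to show $a_1 = 0$, having already established via Lemma~\ref{lem:eigenspace} the precise structure of the metabolizer's eigenspaces. The plan is to exploit the $\overline{d}$--invariant obstruction, mirroring the single-example argument of Lemma~\ref{lem:nonzero}~(3), but now in the multi-copy setting. The key point is that Lemma~\ref{lem:eigenspace} tells us that $E_4 = \bigoplus_{i=1}^a \langle x_i' \rangle$, and each $x_i'$ is (by the orientation convention established earlier) a lift of the curve $\beta$ in the $i$--th copy of $\rho(K_1)$. The plan is to select a nonzero element $z \in E_4 \subset \calm_7$ and compute $d(Y_3(S), z)$, then derive a contradiction with Corollary~\ref{cor:vanishing}, which forces $d(Y_3(S), z) = 0$ for every $z$ in the metabolizer.

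First I would take $z = x_1' \in E_4$, a single pure $4$--eigenvector supported on the first copy of $\rho(K_1)$. By additivity of the $d$--invariant under connected sum, $d(Y_3(S), x_1')$ decomposes as a sum over the $2a$ summands of $H_1(Y_3(S))$, but since $x_1'$ is supported only on the $\rho(K_1)$ factor of the first copy, all other summands contribute their value at the trivial character, namely $d(Y_3(\cdot), 0)$. Using Lemma~\ref{lem:vanish} to handle the baseline terms and passing to $\overline{d}$, the computation reduces to $\overline{d}(Y_3(\rho(K_1)), x_1')$ for a single copy. Here I would invoke Lemma~\ref{lem:meta}~(4), which relates $\overline{d}(Y_3(-\rho(R_*)), \lb')$ to $-\overline{d}(Y_3(R_*), \lb)$, to convert the $\rho$--term back into a statement about $K_1$ itself, and then apply Lemma~\ref{lem:nonzero}~(3): $\overline{d}(Y_3(K_1), \lb) \neq 0$. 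Since $K_1 = R_1(D, J)$ is exactly the single-example knot with $\lb \in E_4$, this nonvanishing is already in hand.

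The main obstacle I anticipate is bookkeeping the signs and the orientation conventions correctly so that the single-copy nonvanishing actually produces a nonzero total. In the single example $S_1 = a(K_1 \cs -\rho(K_1))$, every one of the $a$ copies contributes a $\overline{d}$--term of the \emph{same} sign (because connected summing copies of $K_1 \cs -\rho(K_1)$ does not introduce cancellation between the $K_1$--half and the $\rho(K_1)$--half once we restrict $z$ to lie purely in $E_4$), so I must verify that the obstruction does not vanish through accidental sign cancellation. Concretely, because $E_4 = \bigoplus \langle x_i' \rangle$ consists only of $\rho(K_1)$--lifts and contains no $K_1$--lifts $y_i$, the $\overline{d}$--contributions cannot cancel against one another: any nonzero $z \in E_4$ has the form $\sum_i c_i x_i'$ with some $c_k \neq 0$, and the corresponding summand contributes $\overline{d}(Y_3(\rho(K_1)), c_k x_1')$, which by the conjugation and eigenvalue invariance noted in the proof of Lemma~\ref{lem:nonzero}~(3) equals $\overline{d}(Y_3(\rho(K_1)), x_1')$ regardless of $c_k \neq 0$, hence is nonzero with a fixed sign.

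Assembling these, for any nonzero $z \in E_4$ I obtain $d(Y_3(S), z) = \overline{d}(Y_3(S), z) \neq 0$, contradicting Corollary~\ref{cor:vanishing}, which asserts $d(Y_3(S_1), z) = 0$ for all $z$ in the metabolizer. The only way to avoid the contradiction is that $E_4$ is the zero space, which by Lemma~\ref{lem:eigenspace} forces $a = 0$; that is, $a_1 = 0$. I would close by remarking that this argument uses only the data already assembled for the single knot $K_1$ together with additivity, so no genuinely new computation is needed beyond the careful eigenspace bookkeeping.
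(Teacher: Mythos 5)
There is a genuine gap, and it is the central one. Your argument hinges on invoking Lemma~\ref{lem:nonzero}~(3) for $K_1$, but the knot $K_1$ of Section~\ref{sec:infinite family} is \emph{not} the single-example knot of Section~\ref{sec:single knot}. In Section~\ref{sec:single knot} the knot is $R(D,J)$ with $D={\rm Wh}(T(2,3),0)$ in the $\alpha$ band and $J={\rm Wh}(U,5)$ in the $\beta$ band; in Section~\ref{sec:infinite family} the knot is $K_1=R_1(J_\alpha,J_\beta)$ with $J_\alpha={\rm Wh}(T(2,-3),-1)$ in the $\alpha$ band and $J_\beta=D$ in the $\beta$ band. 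These are different knots with different companions in different bands, so the statement $\bar{d}(Y_3(R(D,J)),r\lb)\ne 0$ gives no information about $Y_3(K_1)$, and your sentence ``$K_1=R_1(D,J)$ is exactly the single-example knot'' is false. A second (related) slip: the classes surviving in the metabolizer here are lifts of $\alpha$, not of $\beta$ --- Lemma~\ref{lem:eigenspace} leaves exactly $\bigoplus_i\langle x_i\rangle\oplus\langle x_i'\rangle$, where $x_i$ and $x_i'$ are $\alpha$--lifts in the copies of $K_1$ and $\rho(K_1)$ (the displayed definition of $x_i'$ in the paper contains a typo, but the eigenspace conventions of Section~\ref{subsec:branch} make this unambiguous). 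This is by design: in Section~\ref{sec:infinite family} the Casson--Gordon \emph{signature} obstruction, driven by $\sigma(J_\alpha)$, eliminates the $\beta$--type classes, and the $d$--invariant obstruction must then be applied to $\alpha$--type classes, where the companion knot in the band is $J_\alpha$, not the unknot.

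Consequently the nonvanishing $d(Y_3(S),z)\ne 0$ for a metabolizer generator (the paper uses $4x_1$) is not ``already in hand''; it is precisely the new computation the proof requires. One must remove the companion $J_\alpha={\rm Wh}(T(2,-3),-1)$ from the $\alpha$ band without changing branched-cover $d$--invariants, and the crossing-change argument of Lemma~\ref{lem:d-unknot} (which worked for ${\rm Wh}(U,5)$, unknottable by crossing changes of either sign) is not available here. The paper instead argues: $\tau(J_\alpha)=0$ by Hedden \cite{MR2372849}; hence by Sato \cite{Sato:2019a} this genus-one knot is $\nu^+$--equivalent to the unknot; hence by Kim--Kim--Kim \cite{KKK:2019aa}, $d(Y_3(S),4x_1)=d(Y_3(S'),4x_1)$, where $S'$ is obtained by replacing $J_\alpha$ with the unknot; and finally $d(Y_3(S'),4x_1)<0$ by the Cochran--Harvey--Horn computation \cite{MR3109864} (for general $n$, by Cha \cite{arxiv:1910.14629}), contradicting Corollary~\ref{cor:vanishing}. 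Your reduction-by-additivity step and your observation that no sign cancellation can occur within $E_4$ are fine, but they are the easy part; the closing claim that ``no genuinely new computation is needed beyond the careful eigenspace bookkeeping'' is exactly where the proposal fails.
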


\begin{proof}
By Lemma,~\ref{lem:eigenspace}  we obtain
\[
\calm_{p_1}\oplus \calm_{p_2}=\bigoplus_{i=1}^a\left(\langle x_i\rangle \oplus \langle x_i'\rangle \right).
\]
Therefore, the homology class $x_1$ is in $\calm_{p_1}\oplus \calm_{p_2}$, and hence $4x_1\in \calm_{p_1}\oplus \calm_{p_2}$. By Corollary~\ref{cor:vanishing}, we have $d(Y_3(S), 4x_1)=0$. By Sato \cite[Theorem~1.2]{Sato:2019a}, a genus one knot with vanishing Ozsv\'{a}th-Szab\'{o} $\tau$--invariant is $\nu^+$--equivalent to the unknot. The knot $J_\alpha$ has genus one and $\tau(J_\alpha)=0$ by \cite[Theorem~1.5]{MR2372849}; it follows that $J_\alpha$ is $\nu^+$--equivalent to the unknot. Now by Theorems~1.3 and 2.7 of \cite{KKK:2019aa}, $d(Y_3(S), 4x_1)=d(Y_3(S'),4x_1)$ where $S'$ is the knot obtained from $S$ by replacing $J_\alpha$ by the unknot. Therefore, $d(Y_3(S'),4x_1)=0$. But in \cite[p.~2141]{MR3109864} Cochran-Harvey-Horn showed that $d(Y_3(S'), 4x_1)<0$. This leads us to a contradiction, and completes the proof for $a_1=0$. 
\end{proof}

{\bf General proof that    $\boldsymbol{a_{n_j} = 0}$ for $\boldsymbol{n_j\in \caln}$:} The proof for $a_{n_j}=0$ for other $n_j\in \caln$ is easily obtained by making the following key modifications of the above proof for $a_1=0$:

\begin{enumerate}

\item For brevity, let $n=n_j$, $a=a_n$, and $S=S_n$. Replace $p_1$ and $p_2$ by $p_{2j-1}$ and $p_{2j}$, respectively. Replace $K_1=R_1(J_\alpha,J_\beta)$ by $K_n=R_n(J_\alpha,J_\beta)$.

\item $H_1(Y_3(S))=\Z_{3n^2+3n+1}^{4a}=(\Z_{p_{2j-1}}\oplus \Z_{p_{2j}})^{4a}$. Notice that   each of $\langle x_i\rangle$, $\langle y_i \rangle$, $\langle x_i'\rangle$, and $\langle y_i' \rangle$ for $1\le i\le a$ is isomorphic to $\Z_{3n^2+3n+1}$. 

\item For $x\in \Z_{3n^2+3n+1}$, let $x^*$ denote the multiplicative inverse of $x$ in $\Z_{3n^2+3n+1}$, if it exists. Replace the 2--eigenspace  and the  4--eigenspace by $n^*(n+1)$-- and $(n+1)^*n$--eigenspaces, respectively. Then we obtain $\calm_{p_{2j-1}}\oplus \calm_{p_{2j}}=E_{n^*(n+1)}\oplus E_{(n+1)^*n}$. 

\item In the  proof Lemma~\ref{lem:eigenspace} for $n=1$, the order of $h$ was 7, a prime. But now the order of $h$ in $E_{n^*(n+1)}$ is a factor of $p_{2j-1}p_{2j}$, possibly not a prime. To use the vanishing criterion for the Casson-Gordon invariant, if necessary, replace $h$ by a multiple of $h$ such that $h_k'\ne 0$ and $h$ is of prime order $p$ where $p$ is either $p_{2i-1}$ or $p_{2i}$. 

\item In the  proof Lemma~\ref{lem:eigenspace}, replace $\sigma_{b_i/7}(J_\alpha) +\sigma_{2b_i/7}(J_\alpha)+\sigma_{4b_i/7}(J_\alpha)$ by $\sigma_{b_i/p}(J_\alpha) +\sigma_{cb_i/p}(J_\alpha)+\sigma_{c^2b_i/p}(J_\alpha)$ where $c=n^*(n+1)\ne 0 \in \Z_{3n^2+3n+1}$.  For the prime $p$, there exists $b\in \Z_p$ so that the set $\{ b/p, cb/p, c^2b/p\}$ contains a value at which the Levine-Tristram signature of $J_\alpha$ is strictly negative. If necessary, replace $h$ by a multiple of $h$ such that $b_k=b\in \Z_p$.

\item Replace $4x_1\in \calm_{p_1}\oplus \calm_{p_2} $ by $2^*x_1\in \calm_{p_{2j-1}}\oplus \calm_{p_{2j}}$. In Theorem~4.2 of \cite{arxiv:1910.14629}, Cha showed that $d(Y_3(S'), 2^*x_1)<0$. 

\end{enumerate}

%%%%%%%%%%%%SECTION%%%%%%%%%%%
\section{Conjectures}\label{sec:conjectures}

The map $\rho$ induces   homomorphisms on  many  subgroups and quotients of subgroups related to $\calc$.  In each case, we will continue to denote the map by $\rho$.   

In~\cite{MR3109864}, Cochran, Harvey and Horn defined a {\it bipolar filtration} of the knot concordance group, which, when restricted to $\calt$, gives  a filtration 
\[
0 \subset \cdots \subset \calt_{n+1} \subset \calt_n \subset \cdots \calt_0 \subset\calt.
\]
Let $\calt_{n, \Delta}= \calt_n/(\calt_n \cap \calt_\Delta)$; notice that $\rho$ induces an involution on this quotient.

The first conjecture  seems likely, based on~\cite{Cha:2017aa}.

\begin{conjecture} For all $n \ge 1$, the quotient $\calt_{n, \Delta}/\text{Fix}(\rho)$ contains an infinitely generated free subgroup.
\end{conjecture}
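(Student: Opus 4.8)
The plan is to transplant the construction behind Theorem~\ref{thm:main2} to the $n$-th stage of the bipolar filtration, replacing the single Whitehead doublings of Section~\ref{sec:infinite family} by the iterated infection (robust doubling) operators that Cochran-Harvey-Horn~\cite{MR3109864} and Cha-Kim~\cite{Cha:2017aa} use to produce knots deep in $\calt_n$. Concretely, I would keep the schematic knots $R_m$ but insert into their two bands companions $J_\alpha^{(n)}$ and $J_\beta^{(n)}$ built by $n$--fold iterated satellite operations, chosen so that (i) $J_\beta^{(n)}$ is topologically slice, forcing $K_m^{(n)} := R_m(J_\alpha^{(n)}, J_\beta^{(n)})$ to be topologically slice exactly as in the present paper, and (ii) $K_m^{(n)} \in \calt_n$. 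Membership in $\calt_n$ should follow from the behaviour of the bipolar filtration under infection by knots lying at level $n-1$, which is precisely the mechanism exploited in \cite{MR3109864, Cha:2017aa}.

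Since the bipolar filtration is defined by four-dimensional data insensitive to string orientation, $\rho$ preserves each $\calt_n$, so the elementary reduction of Lemma~\ref{lem:simple-argument} applies verbatim with $\calt$ replaced by $\calt_n$: linear independence of $\{K_m^{(n)}\}$ in $\calt_{n,\Delta}/\text{Fix}(\rho)$ is equivalent to linear independence of $\{K_m^{(n)} \cs -\rho(K_m^{(n)})\}$ in $\calt_n/(\calt_n \cap \calt_\Delta)$. I would then repeat the metabolizer analysis of Section~\ref{sec:infinite family} on $H_1(Y_3(K_m^{(n)} \cs -\rho(K_m^{(n)})))$, using the coprimality of Theorem~\ref{thm:primes} (which is purely number-theoretic and transfers unchanged) to split any invariant metabolizer along the family and into eigenspaces of the order-three deck transformation. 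As in Lemma~\ref{lem:eigenspace}, one eigenspace should be pinned down by a reversal-sensitive signature obstruction coming from $J_\alpha^{(n)}$, leaving a $d$--type obstruction on the complementary eigenspace.

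The crux, and the main obstacle, is that the invariants used in Sections~\ref{sec:single knot}--\ref{sec:infinite family} --- the ordinary Casson-Gordon discriminant and signature, together with the $d$--invariant of the $3$--fold branched cover --- are first-order and typically vanish already on $\calt_1$. To survive to level $n$ they must be upgraded to the higher-order obstructions of \cite{arxiv:1910.14629} and \cite{Cha:2017aa}: Cha's iterated $d$--invariants of branched covers (assembled through his satellite formula) in place of $\bar d$, and amenable von~Neumann $\rho$--invariants in place of the Casson-Gordon signature. I would need a level-$n$ analog of Lemma~\ref{lem:nonzero}: an infecting knot $J_\alpha^{(n)}$ whose higher-order signature contribution on the $2$--eigenspace is forced strictly nonzero, while the companion $J_\beta^{(n)}$ in the $4$--eigenspace is topologically slice so that its amenable signatures vanish and the surviving obstruction is the iterated $d$--invariant. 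The level-$n$ versions of Lemma~\ref{lem:meta} (signs under orientation reversal) and of Corollary~\ref{cor:vanishing} should be routine.

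The delicate point I expect to fight hardest is the one-sided vanishing that makes the whole mechanism work. The argument rests on an asymmetry between an eigenspace controlled by a signature-type invariant and one controlled by a $d$--type invariant, and one must check that Cha's level-$n$ invariants retain exactly this dichotomy after the iterated infections, with no accidental cancellation across the stages of doubling. Verifying that the surviving higher-order $d$--invariant is both computable and strictly negative on the relevant eigenspace --- the analog of quoting \cite[Theorem~4.2]{arxiv:1910.14629} --- is where the genuine work lies, and is presumably the reason the statement is posed as a conjecture rather than proved here.
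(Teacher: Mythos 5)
This statement is posed in the paper as a conjecture, not a theorem: the authors offer no proof, only the remark that it ``seems likely,'' based on \cite{Cha:2017aa}. So there is no proof of the paper's to compare yours against, and your proposal --- as you yourself concede in its last paragraph --- is a research plan rather than a proof. The parts that genuinely transfer are fine: $\rho$ preserves each $\calt_n$ because bipolarity is insensitive to string orientation, so the reduction of Lemma~\ref{lem:simple-argument} goes through with $\calt$ replaced by $\calt_n$, and Theorem~\ref{thm:primes} is purely number-theoretic. But the core of the argument is missing, and it is worth naming exactly where.

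First, as you correctly observe, the obstructions that drive Sections~\ref{sec:single knot}--\ref{sec:infinite family} are unusable at level $n\ge 1$: by the results of Cochran--Harvey--Horn (Theorems~6.2 and 6.5 and Corollary~6.6 of \cite{MR3109864}, the very results this paper cites), $1$-bipolar knots already satisfy the metabolizer-vanishing conclusions for $d$--invariants of prime-power branched covers, and their Casson--Gordon obstructions vanish as well; hence for $K_m^{(n)}\in\calt_n$ your metabolizer analysis would yield no contradiction whatsoever. Everything therefore hinges on replacing $\eta$ and $\bar d$ by higher-order invariants that simultaneously (a) obstruct sliceness through a metabolizer statement analogous to Theorem~\ref{thm:obstruct}, (b) are additive and equivariant so the eigenspace splitting and the sign argument of Lemma~\ref{lem:meta} survive, (c) retain the asymmetric dichotomy --- one eigenspace killed by a signature-type term, the other by a $d$-type term, the analog of Lemma~\ref{lem:nonzero} --- and (d) can be proven nonvanishing on explicit infecting knots $J_\alpha^{(n)}$, $J_\beta^{(n)}$ that at the same time place $K_m^{(n)}$ in $\calt_n$. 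Point (d) contains a real tension, not a routine verification: the depth-$n$ positivity and negativity needed for membership in $\calt_n$ is exactly the kind of condition that forces vanishing of low-order invariants, so one must engineer invariants and examples that slip past this --- which is the content of \cite{Cha:2017aa} and \cite{arxiv:1910.14629} for the quotients $\calt_n/\calt_{n+1}$, and which has not been done in the presence of string reversal or modulo $\calt_\Delta$. Since none of (a)--(d) is established, or even reduced to a precise citable statement, what you have is an accurate map of the route and its obstacles; the conjecture itself remains open.
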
  

The next conjecture also seems likely, but it it not clear that any currently available tools can address it.   

\begin{conjecture}
The quotient $\calt_\Delta/\text{Fix}(\rho)$ contains an infinitely generated free subgroup.
\end{conjecture}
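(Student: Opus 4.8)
The plan is to discard the entire homological toolkit of this paper and replace it with the higher-order machinery of the solvable and bipolar filtrations, because that toolkit is useless in this range: when $\Delta_K(t)=1$ one has $H_1(Y_q(K))=0$ for every $q$, so every Casson--Gordon invariant $\eta$ and every $\bar d$--invariant attached to a character of a branched cover vanishes identically, and with them every obstruction used in Sections~\ref{sec:obstructions}--\ref{sec:infinite family}. The natural substitute is the family of von Neumann $L^2$--signatures $\varrho^{(2)}$ associated to the derived series, as developed by Cochran--Orr--Teichner and computed in the topologically slice setting by Cochran--Harvey--Horn~\cite{MR3109864} and by Cha~\cite{arxiv:1910.14629}. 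These invariants are additive under $\cs$, they change sign under orientation reversal of the underlying surgery manifold, and, unlike $\eta$ and $\bar d$, they are known to be nonzero on deep strata of $\calt_\Delta$; so they are the only currently available candidates with a chance of separating an Alexander polynomial one knot from its reverse.

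First I would build a reversal--asymmetric infection family modeled on the two--band knots $R_n(J_\alpha,J_\beta)$ used here, but with the infecting knots and patterns arranged so that the resulting satellites lie in $\calt_\Delta$ (for instance by infecting along winding--number--zero curves in the genus--one surface, or by iterating Whitehead--type doubling so that the pattern forces $\Delta\equiv 1$). The essential feature to preserve is the mechanism that makes $\rho$ visible in the present paper: string reversal interchanges the two infection curves $\alpha$ and $\beta$, exactly as reversal inverts the deck transformation and swaps the eigenspaces $E_2,E_4$ in Section~\ref{subsec:branch}. I would set up the higher--order coefficient system so that only the $\alpha$--side contributes to $\varrho^{(2)}$; then, by additivity, the $\varrho^{(2)}$--invariant of $K\cs-\rho(K)$ is the $\alpha$--side contribution minus the (vanishing) $\beta$--side contribution, and hence nonzero. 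Using the Cheeger--Gromov / Cochran--Harvey--Leidy bounds together with Cha's realization and vanishing theorems, one chooses infecting knots with large prescribed integrals of Levine--Tristram signatures to guarantee nonvanishing and, crucially, to keep different family members ``independent'' at the level of the higher--order Alexander modules, so that a nontrivial combination $\cs_{n} a_n\bigl(K_n\cs-\rho(K_n)\bigr)$ retains a nonzero $\varrho^{(2)}$ and is therefore not slice. By the elementary argument of Lemma~\ref{lem:simple-argument}, this yields the desired infinitely generated free subgroup of $\calt_\Delta/\text{Fix}(\rho)$.

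The main obstacle is the passage from a homological asymmetry to a genuinely noncommutative one. Every presently understood invariant that survives on $\calt_\Delta$ and could detect reversal is in fact insensitive to string orientation: $\cfk^\infty(K)$ and $\cfk^\infty(\rho(K))$ are filtered chain homotopy equivalent, so $\tau$, $\U$, and the Rasmussen invariant all agree on $K$ and $\rho(K)$, while $\eta$ and $\bar d$ are trivial here. The higher--order signatures are only \emph{a priori} asymmetric; because the surgery manifolds of $K$ and $\rho(K)$ are orientation--preservingly diffeomorphic, any symmetrically chosen coefficient system will give $\varrho^{(2)}(K)=\varrho^{(2)}(\rho(K))$, and it is entirely possible that $\varrho^{(2)}\bigl(K\cs-\rho(K)\bigr)$ vanishes for structural reasons mirroring the $\cfk^\infty$ symmetry. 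The heart of the problem is thus to produce a representation of the fundamental group of the relevant higher--order cover that distinguishes the $\alpha$--band from the $\beta$--band \emph{and} is genuinely altered by the eigenspace swap induced by $\rho$, something the present argument accomplishes only at the level of $H_1$ of a single branched cover. Verifying that such an asymmetric coefficient system exists, and carrying out the attendant nonvanishing computation, is exactly the step for which no current technique is known, which is why the conjecture remains out of reach.
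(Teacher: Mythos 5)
This statement is not a theorem of the paper at all: it is the second conjecture of Section~\ref{sec:conjectures}, and the authors explicitly remark that ``it is not clear that any currently available tools can address it.'' There is therefore no proof in the paper to compare against, and your proposal is not a proof either --- it is a research program whose final paragraph concedes that its essential step is out of reach. That concession is accurate, and it marks a genuine gap, not a fixable technicality.

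Concretely, the gap is this. The mechanism that makes $\rho$ detectable throughout Sections~\ref{sec:obstructions}--\ref{sec:infinite family} is entirely homological: reversal inverts the deck transformation of $Y_3(K)$ and swaps the eigenspaces $E_2$ and $E_4$ of $H_1(Y_3(K))$, and the obstructions $\eta$ and $\bar d$ are attached to characters on that homology. On $\calt_\Delta$ this mechanism is vacuous, since $\Delta_K(t)=1$ forces $H_1(Y_q(K))=0$ for every prime power $q$; you identify this correctly. But your replacement --- von Neumann $\varrho^{(2)}$--invariants with higher-order coefficient systems in the style of Cochran--Harvey--Horn~\cite{MR3109864} and Cha~\cite{arxiv:1910.14629} --- never acquires the asymmetry it needs. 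The zero-surgery manifolds of $K$ and $\rho(K)$ are orientation-preservingly homeomorphic, so any coefficient system defined functorially from that manifold (in particular the canonical derived-series or metabelian quotients through which the known nonvanishing results on $\calt_\Delta$ are proved) yields $\varrho^{(2)}(K)=\varrho^{(2)}(\rho(K))$, and additivity then gives no obstruction for $K\cs -\rho(K)$. Your claim that one can ``set up the higher-order coefficient system so that only the $\alpha$--side contributes'' and that the result is ``hence nonzero'' is precisely the assertion that a reversal-asymmetric representation exists and can be certified nonvanishing; no construction is given, and you yourself retract it two sentences later. Until that representation is produced --- the noncommutative analogue of the eigenspace swap in Section~\ref{subsec:branch} --- the argument does not start, and the reduction via Lemma~\ref{lem:simple-argument} has nothing to act on. Your sketch is a reasonable account of \emph{why} the conjecture is hard, and it agrees with the authors' own assessment, but it should be presented as such rather than as a proof.
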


Finally, each of these conjectures can be modified to consider two-torsion.  It was proved in~\cite{MR3466802} that $\calt$ contains an infinite set of elements of order two, as does   $\calt / \calt_\Delta$.  These knots were all reversible.   

\begin{conjecture} There exists a knot $ K \in \calt$ such that $2K = 0$  but  $K \ne  \rho(K)$ in $\calt$.
\end{conjecture}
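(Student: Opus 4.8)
The plan is to produce the required $K$ as a \emph{negatively amphichiral} knot, that is, a topologically slice knot for which $-K$ is isotopic to $K$; such a $K$ automatically satisfies $2K = 0$, so the entire problem reduces to arranging $K \ne \rho(K)$, i.e.\ to showing that $K \cs -\rho(K)$ is not smoothly slice. Since $-\rho(K)$ is nothing but the mirror image of $K$, one has $Y_3(-\rho(K)) = -Y_3(K)$, and the amphichirality furnishes an orientation--reversing self--diffeomorphism $\Phi$ of $Y_3(K)$ with $\Phi_*$ commuting with the deck transformation (mirroring and reversal each invert the order three transformation, so their composite preserves it). Thus $\Phi_*$ preserves the $2$-- and $4$--eigenspaces, and one would attempt to obstruct $K \cs -\rho(K)$ by the metabolizer criterion of Theorem~\ref{thm:obstruct} applied to $Y_3(K)\cs -Y_3(K)$, exactly as in Lemmas~\ref{lem:cases}--\ref{lem:nonzero}.

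The first, and decisive, obstacle is that the $\bar{d}$--invariant --- the principal tool of this paper --- is useless in the amphichiral setting. Indeed, $2K = 0$ makes $K \cs K$ slice, so $Y_3(K)\cs Y_3(K)$ bounds a rational homology ball carrying an equivariant metabolizer $\calm$ with $d|_\calm = 0$; composing with $\mathrm{id}\cs\Phi$, which is an orientation--preserving diffeomorphism $Y_3(K)\cs Y_3(K) \to Y_3(K)\cs -Y_3(K)$ commuting with the deck transformations, transports $\calm$ to an equivariant metabolizer of $H_1(Y_3(K\cs -\rho(K)))$ on which $\bar{d}$ still vanishes. Hence no $\bar{d}$--obstruction can ever survive, and one is forced to obstruct $K \cs -\rho(K)$ using the Casson--Gordon invariant $\eta$ alone; the transported metabolizer must in particular be killed by $\eta$.

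This in turn dictates the shape of the construction. For the single--example pattern each eigenspace of $H_1(Y_3(K))$ is one--dimensional, so $\eta$ is constant on the nonzero elements of each eigenspace; then the amphichiral relation $\eta(K,3,v) = -\eta(K,3,\Phi_* v) = -\eta(K,3,v)$ forces $\eta \equiv 0$ there, and every obstruction collapses. The remedy is to work with a pattern --- for instance $R_n$ for suitable $n$, or a connected sum of several of the $K_{n_i}$ of Section~\ref{sec:infinite family} --- whose eigenspaces have dimension at least two, and to infect with knots realizing Levine--Tristram signatures of \emph{both} signs. On a higher--dimensional eigenspace $\Phi_*$ acts nontrivially, so the relation $\eta(K,3,v) = -\eta(K,3,\Phi_* v)$ merely pairs positive values of $\eta$ against negative ones rather than annihilating them, leaving room for the individual Casson--Gordon signatures to be nonzero.

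The heart of the matter, and the reason the assertion is only conjectural, is to make these two requirements coexist. One must exhibit a negatively amphichiral, topologically slice $K$ together with an explicit slicing of $K \cs K$ whose induced metabolizer $\calm$ pairs a positive--$\eta$ direction against a negative--$\eta$ one (so that $\eta$ genuinely vanishes on $\calm$ for the knot $K \cs K$, as it must), while simultaneously verifying that for the \emph{different} knot $K \cs -\rho(K)$ the Casson--Gordon signature is nonzero on the transported metabolizer and on every other equivariant metabolizer of $H_1(Y_3(K)\cs -Y_3(K))$. Reconciling the vanishing forced by $2K = 0$ with the nonvanishing needed for irreversibility --- a single sign pattern that cancels for $K \cs K$ but not for $K \cs -\rho(K)$ --- is exactly the delicate point, and carrying out the attendant signature bookkeeping on a multidimensional eigenspace is where the real work, and the present gap, lies.
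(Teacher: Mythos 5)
You are attempting to prove a statement that the paper itself does not prove: it is one of the closing conjectures of Section~\ref{sec:conjectures}, and the authors explain exactly why it is open --- the known order-two elements of $\calt$, constructed in~\cite{MR3466802}, are all represented by reversible knots. Your write-up is likewise not a proof (you say yourself that the ``present gap'' lies in the final signature bookkeeping), so the only thing to assess is whether your strategic analysis is sound. It is not: it contains a concrete error at its pivot point, and the two halves of the plan contradict each other.

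The error is the parenthetical claim that $\mathrm{id}\cs\Phi$ is a diffeomorphism $Y_3(K)\cs Y_3(K)\to Y_3(K)\cs -Y_3(K)$ ``commuting with the deck transformations.'' You correctly note that $\Phi$ commutes with the deck transformation $T$ of $Y_3(K)$ (mirroring plus string reversal preserves oriented meridians). But the canonical deck transformation of $Y_3(K\cs -\rho(K))$ is \emph{not} $T\cs T$: since $-\rho(K)$ is the mirror of $K$, and mirroring alone reverses oriented meridians, under the identification $Y_3(-\rho(K))=-Y_3(K)$ the deck transformation of the second summand corresponds to $T^{-1}$. This is precisely the eigenspace swap (the roles of $\la$ and $\lb$ being exchanged) that the paper records in Section~\ref{subsec:branch} and that makes the list in Lemma~\ref{lem:cases} short and exploitable. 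Consequently $(\mathrm{id}\cs\Phi)_*$ carries a $T\oplus T$--invariant subgroup to a $T\oplus T$--invariant subgroup, whereas condition (2) of Theorem~\ref{thm:obstruct} demands invariance under $T\oplus T^{-1}$. Your transported metabolizer therefore has no reason to be equivariant, and your central assertion that ``no $\bar{d}$--obstruction can ever survive'' in the amphichiral setting does not follow; the equivariance clause is exactly the loophole that keeps the paper's machinery potentially alive there.

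Worse, the plan is internally inconsistent. Suppose the transport \emph{were} legitimate, i.e.\ the transported metabolizer $\calm'$ were equivariant. Then the same formal argument that kills $\bar{d}$ on $\calm'$ also kills $\eta$: using Gilmer additivity, the paper's sign rule $\eta(-\rho(K),3,\psi)=-\eta(K,3,\psi)$ (Lemma~\ref{lem:meta}), and your own amphichiral relation $\eta(K,3,\Phi_*v)=-\eta(K,3,v)$, one computes, for $(x,y)\in\calm$, that $\eta\bigl(K\cs -\rho(K),3,(x,\Phi_*y)\bigr)=\eta(K,3,x)-\eta(K,3,\Phi_*y)=\eta(K,3,x)+\eta(K,3,y)=0$, since $\calm$ is the metabolizer of the slice knot $K\cs K$. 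So your stated goal --- verifying that the Casson--Gordon signature is \emph{nonzero} on the transported metabolizer --- is formally impossible whenever that metabolizer is a legitimate candidate in Theorem~\ref{thm:obstruct}. Either $\calm'$ fails equivariance (and then it is irrelevant, and your premise that $\bar{d}$ is useless is unsupported), or it is equivariant (and then every invariant in this paper vanishes on it, $\eta$ included, and the approach dies). In neither case does the outline point toward a proof; the statement remains, as in the paper, an open conjecture.
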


%%%%%%%%%%%%Appendix%%%%%%%%%%%
\appendix

%%%%%%%%%%%%SECTION%%%%%%%%%%%
\section{Primes}\label{sec:appendix}

We wish to prove the following, stated as Theorem~\ref{thm:primes} above.  

\begin{theorem-app}\label{thm:app1}     There is an infinite set of positive integers $\{n_i\}_{i\ge 1}$ such that for all $i$, $3n_i^2 + 3n_i +1 = p_{2i-1}p_{2i}$  where: {(1)} each $p_j$ is either an odd prime or equals 1, and {(2)} if $j \ne l$ and $p_j \ne 1 $, then $p_j \ne p_l$.
\end{theorem-app}

The proof is based on the following theorem of Lemke Oliver~\cite{MR2860953}.  (The meaning of $\Gamma_G$  in the statement of the theorem will be mentioned in the following proof.)

\begin{theorem-app} \label{thm:oliver} If  $G(x)  = c_2x^2 + c_1x +c_o \in\Z[x]$ is irreducible, with $c_2 > 0$ and $\Gamma_G \ne 0$, then there exist infinitely many positive integers $n$ such that $G(n)$ is square free and has at most two distinct prime factors. 

\end{theorem-app}

\begin{proof}[Proof of Theorem~\ref{thm:app1}]  Let $f(n) = 3n^2 + 3n+1$ and note that $f(n)$ is odd for all $n \in \Z$.  Let $n_1 = 1$; then we have $p_1 = 7$ and $p_2 = 1$.  Assume that a set of integers $\{n_j\}_{j=1}^k$  that satisfies the condition of the theorem has been selected.   We now show how $n_{k+1}$ can be chosen.

Let $P = \prod_{i=1}^{2k} p_i$.   Define $g(m) = f(Pm - 1 )$.  This can be rewritten as 
\[
g(m) = 3P^2 m^2 -3Pm +1.
\]
Since $g(m)$ is obtained from the irreducible polynomial $f(n)$ by a linear change of coordinates, $g(m)$ is irreducible and   Theorem~\ref{thm:oliver} can be applied to find an $m_0$ for which $g(m_0)$ factors as $p_{2k+1}p_{2k+2}$.  We let $n_{k+1} = Pm_0 -1$.   Notice that no prime factor of $P$ is a divisor of $g(m)$ for any $m$, and thus  $p_{2k+1}$ and $ p_{2k+2}$ are distinct from all the primes $p_i$ for $i \le 2k$.

Finally, we need to mention the quantity $\Gamma_G$.  Without going into details, $\Gamma_G = 0$ precisely when $G(n)=0$ has two solutions modulo  2.  But in our case, modulo 2, $g(m) = m^2 + m +1$, which is irreducible.
\end{proof}

%%%%%%%BIBLIOGRAPHY%%%%%%%%%%%%%%

\bibliography{BibTexComplete.bib}
\bibliographystyle{plain}	

\end{document}